\numberwithin{equation}{section}
\newtheorem{thm}{Theorem}[section]
\newtheorem{prop}[thm]{Proposition}
\newtheorem{lem}[thm]{Lemma}
\newtheorem{cor}[thm]{Corollary}
\theoremstyle{definition}
\newtheorem{example}[thm]{Example}
\newcommand{\C}{{\mathbb C}}
\newcommand{\Z}{{\mathbb Z}}
\newcommand{\R}{{\mathbb R}}
\newcommand{\bs}{\boldsymbol}
\newcommand{\sL}{\mathfrak{sl}}
\newcommand{\n}{\mathfrak n}
\newcommand{\h}{\mathfrak h}
\newcommand{\g}{\mathfrak g}
\newcommand{\coch}{\mbox{coch}}
\newcommand{\bomega}{\boldsymbol \omega}
\newcommand{\la}{\lambda}
\newcommand{\ch}{{\rm ch}}
\newcommand{\Ker}{\mathop{\rm Ker}}
\newcommand{\on}{\operatorname}
\newcommand{\al}{\alpha}
\newcommand{\ev}{\mathrm{ev}}
\newcommand{\be}{\begin{equation*}}
\newcommand{\ee}{\end{equation*}}
\begin{document}

\author{Maico Freitas and Evgeny Mukhin}
\address{MF: Department of Mathematical Sciences,
Indiana University Indianapolis,
402 N. Blackford St., LD 270, 
Indianapolis, IN 46202, USA}
\email{mgfreita@iu.edu}
\address{EM: Department of Mathematical Sciences,
Indiana University Indianapolis,
402 N. Blackford St., LD 270, 
Indianapolis, IN 46202, USA}
\email{emukhin@iu.edu}

\title{The deformed Tanisaki-Garsia-Procesi modules}

\maketitle

\begin{abstract}  The polynomial ideals  studied by A. Garsia and C. Procesi 
play an important role in the theory of Kostka polynomials. We give multiparameter flat deformations of these ideals and define an action of the extended affine symmetric group on the corresponding quotient algebras multiplied by the sign representation. We show that the images of these modules under the affine Schur-Weyl duality are dual to the local Weyl modules for the loop algebra $\sL_{n+1}[t^{\pm 1}].$  
\end{abstract}

\section{Introduction}
In \cite{DP},  C. De Concini and C. Procesi studied  a family of remarkable algebras $R(\la)$  labeled by partitions $\la$ of $d$. The algebra $R(\la)$ is the algebra of regular functions on the intersection of the set of diagonal $d\times d$ complex matrices with the closure of the set of nilpotent matrices whose  Jordan block sizes are parts of the partition $\la$. In \cite{T}, T. Tanisaki described  the algebra $R(\la)$ as a quotient of $\C[t_1,\dots,t_d]$ by an explicit graded symmetric ideal $\langle {\mathscr C}_\la \rangle$. In \cite{GP},  A. Garsia and C. Procesi proved that, as a graded module for the symmetric group $S_d$, the algebra $R(\la)$ has a remarkable decomposition:
$$\setlength\abovedisplayskip{8pt}
\setlength\belowdisplayskip{6pt}
R(\lambda) \cong \sum_{\mu \,\vdash d }\widetilde K_{\mu, \,\lambda^t}(q)L(\mu),$$
where $L(\mu)$ is the irreducible $S_d$-module corresponding to the partition $\mu$ of $d$, and  $\widetilde K_{\mu,\la^t}(q)$ is the celebrated modified Kostka polynomial.

On the other hand, there is a famous family of $\mathfrak{sl}_{n+1}[t^{\pm 1}]$ finite-dimensional cyclic modules defined in \cite{CP}, called the local Weyl modules $W_a(\la)$, which depend on a partition with at most $n$ parts $\la$ and non-zero complex parameters $\bs a=(a_1,\dots,a_{\la_1})$, see \eqref{weylmodule}. It is well-known that the local Weyl modules $W_{\bs a}(\la)$  have a similar decomposition as  $\mathfrak{sl}_{n+1}$-modules:
$$
W_{\bs a}(\lambda) \cong \sum_{\mu \,\vdash |\la| }\widetilde K_{\mu, \,\lambda}(1)V(\mu),
$$
where $V(\mu)$ is the irreducible $\mathfrak{sl}_{n+1}$-module of highest weight corresponding to $\mu$.  

In this paper, we show that there is a natural explanation for this coincidence of decompositions. The representation theories of the symmetric group $S_d$ and of the Lie algebra $\mathfrak{sl}_{n+1}$ are connected by the classic Schur-Weyl duality. There is an extension of this duality to the affine setting, known as the affine Schur-Weyl duality, relating the representations of the extended affine symmetric group  $\widetilde S_d$ and of the loop algebra $\mathfrak{sl}_{n+1}[t^{\pm 1}]$, see \cite{CP96}, \cite{Y}, \cite{FKM}. We give a deformation $\langle {\mathscr C}_\la^{\,\bs a}\rangle$ of the ideal $\langle {\mathscr C}_\la \rangle$ depending on  a sequence of parameters $\bs a$ and show that  $R_{\bs a}(\la)=\C[t_1,\dots,t_d]/\langle {\mathscr C}^{\, \bs a}_\la \rangle$ multiplied by the one-dimensional sign $S_d$ representation $L(1,\dots,1)$ is an  $\widetilde S_d$-module whose image with respect to the affine Schur-Weyl duality is dual to the local Weyl module $W_{\bs a}(\la)$, see Theorem \ref{dualweyl}.


The ideal $\langle {\mathscr C}_\la^{\,\bs a}\rangle$  is described explicitly by the requirement that the ratio
$$
 \frac{\prod_{j\in J} (x+t_j)}{ \prod_{i=1}^{m_\la(n)}(x+\bs a^{(n)}_i)}
$$
is a polynomial in $x$ for all $n, J$. Here, $J\subseteq \{1,\dots,|\la|\}$ is any subset of cardinality $|\la|-n$,  $m_\la(n)=\la_{n+1}+\la_{n+2}+\dots$, and  $\bs a^{(n)}=(a_1,a_2,\dots,a_{\la_{n+1}},a_1,\dots,a_{\la_{n+2}},\dots)$. In particular, we prove that this deformation is flat:  the $S_d$-module structure of  $R_{\bs a}(\la)$  does not depend on $\bs a$ and, in particular,  $\dim R_{\bs a}(\la)$ is the same for all $\bs a$, see Theorem \ref{flat thm}. We call the algebra $R_{\bs a}(\la)$ the deformed Tanisaki-Garsia-Procesi \cancel{(GPT)} (TGP) algebra.

The affine symmetric group is the semidirect product $\widetilde S_d = S_d \ltimes \mathbb Z^d$. If we identify the group algebra $\mathbb C \mathbb Z^d  = \mathbb C[t_1^{\pm },\dots, t_d^{\pm 1}]$, the TGP algebra is naturally a cyclic $\widetilde S_d$ module whose cyclic vector $1$ is fixed by the symmetric group $S_d$. Analogously, the dual to the local Weyl module $W_{\bs a}(\la)$ is also a cyclic $\mathfrak{sl}_{n+1}[t^{\pm 1}]$-module generated from the $\mathfrak{sl}_{n+1}$-module $V(1,1,\dots,1)$. If all $a_i$ are distinct, the local Weyl module $W_{\bs a}(\la)$ is irreducible and isomorphic to a tensor product of evaluation modules corresponding to the columns of $\la$ and evaluation parameters $a_i$. In particular, it is easy to find its preimage with respect to the affine Schur-Weyl duality and describe it as a quotient of $\mathbb C[t_1,\dots,t_d]$. Then, one has to take the limit as various $a_i$ may go to the same limiting value. Such a limit is subtle and is similar to the construction of the fusion tensor products, see \cite{CL}. We illustrate the process of taking the limit by an example, see Section \ref{amendedlocal}.

\medskip

Using the results of this paper one can describe the preimage of the graded local Weyl module of the current $\sL_{n+1}$ algebra as the dual to the non-deformed TGP module $R(\la)$ multiplied by the sign representation of $S_d$. We were informed by A. Moura that this result is known to him. For the case of one row partitions, the similarity between graded local Weyl modules and TGP algebras was observed in \cite{K}.

\medskip

The paper is constructed as follows. In Section \ref{prel sec}, we define the main objects of our study: the extended symmetric group $S_d$, the loop algebra $\mathfrak{sl}_{n+1}[t^{\pm 1}]$ and give some facts about their representations. In Section \ref{SW}, we recall the affine Schur-Weyl duality and show it is compatible with taking duals. In Section \ref{TGP sec}, we define the Tanisaki-Garsia-Procesi algebras, recall their characters in terms of the modified Kostka polynomials, give a deformation of the TGP algebras, and establish some simple properties. In Section \ref{main sec}, we prove our main results: that the deformations of the TGP algebras are flat, Theorem \ref{flat thm} and that the images of the affine Schur-Weyl functor of the deformed TGP modules are dual to local Weyl modules, Corollary \ref{dual}. We conclude with an example in Section \ref{example sec}.
 
\section{Preliminaries}\label{prel sec}

\subsection{ Partitions and Young tableaux} 

Let $d\in \mathbb Z_{\geq  0}$. A partition of $d$ is a finite sequence of non-negative integers of the form $\lambda =(\lambda_1,\dots,\lambda_p)$ such that $\sum_{i=1}^p \lambda_i = d$ and $\lambda_i\geq \lambda_{i+1} $. The integers $\lambda_i\geq 0$ occurring in $\la$ are usually referred to as the parts of $\la$.  We identify partitions $\lambda $ and $\mu$ which differ only by parts that are zero.

Denote by $\mathscr P_d$ the  set of partitions of $d$.
For a partition $\lambda$ of $d$, we often write $\lambda\vdash d$ to indicate that $\lambda \in \mathscr P_d$.
If $d>0$ and the  non-zero distinct parts of $\la$ are $d_1>d_2>\dots>d_s$ with multiplicities $m_1,\dots, m_s$, sometimes we simplify notation and write $\lambda = \big(d_1^{\, \{ m_1\} },\dots, d_s^{\,\{ m_s \} }\big) $. The number $\ell(\lambda)=\sum_{i=1}^s m_i$ of non-zero parts of $\lambda$ is called the length of the partition $\lambda$. 

In what follows, we will have various objects depending on a partition $\la$, such as  $L(\la),$ $R(\la)$, etc. If $\la=\big(d_1^{\, \{ m_1\} },\dots, d_s^{\,\{ m_s \} }\big)$, we omit the second set of parenthesis and simply write  $L(d_1^{\, \{ m_1\} },\dots, d_s^{\,\{ m_s \} })$,  $R(d_1^{\, \{ m_1\} },\dots, d_s^{\,\{ m_s \} })$, etc, instead of  
$L((d_1^{\, \{ m_1\} },\dots, d_s^{\,\{ m_s \} }))$, $R((d_1^{\, \{ m_1\} },\dots, d_s^{\,\{ m_s \} }))$, etc.

Let $\preceq $ be the dominance partial order on $\mathscr P_d$ given by $\lambda\preceq \mu$  if and only if 
\begin{equation*} 
\setlength\abovedisplayskip{8pt}
\setlength\belowdisplayskip{8pt}
    \lambda_1+\cdots + \lambda_k \leq  \mu_1+\cdots +\mu_k \qquad \mbox{for all} \ \ 1\leq k \leq d.
\end{equation*}

For $\lambda \vdash d,$ the Young diagram of $\lambda$ is the left-aligned array of $d$ boxes consisting of $\ell(\lambda)$ rows with $\lambda_i$ boxes in the $i$-th row. We use the English convention in which the first row is the top row. The transposed partition of $\lambda$ is the partition $\lambda^t$  whose parts are given by  $\lambda^t_{\,i} = \#\{j: \lambda_j\geq i\}$. The Young diagram of  $\lambda^t$ is obtained from the Young diagram of $\la$ by switching the rows and columns.

Given $\lambda \vdash d$, a Young tableau of shape $\lambda$ consists of a Young diagram of $\lambda$ in which each box is filled with an element of the set $\{1,\dots, d\}.$ A Young tableau is said to be  semi-standard if the fillings weakly increase  in rows from left to right, and strictly increase in columns from top to bottom. 
A semi-standard Young tableau is said to be standard if the fillings are all distinct. Denote by $\mbox{SSYT}(\lambda)$ the set of semi-standard Young tableaux of shape $\lambda$.

For $\lambda, \mu \vdash d$, we say that $T\in \mbox{SSYT}(\lambda)$ has content $\mu$ if $T$ has $\mu_i$ fillings equal to $i$. Denote by $\mbox{SSYT}(\lambda, \mu)$  the set of all semi-standard Young tableaux of shape $\lambda$ and content $\mu$.  The cardinality of $\mbox{SSYT}(\lambda,\mu)$ is the Kostka number  $K_{\la, \,\mu}$. 
In particular, we have $K_{(d),\, \lambda} = K_{\lambda,\, \lambda} = 1$ for all $\lambda \vdash d$ and $K_{\lambda,\, \mu} > 0$ if and only if $\mu\trianglelefteq \lambda$.

\subsection{Words, subwords and cocharges}\label{sec cocharge}
A word $w$ is a finite sequence of positive integers. We refer to any subsequence of $w$ as a subword of $w$. We say that a word $w$ has content $\lambda=(\lambda_1,\lambda_2,\dots)$ if $w$ has the integer $i$ occurring $\lambda_i$ times.
We say that a word $w$ is standard if it has content $\lambda=(1,1,\dots,1,0,0,\dots)$.

A word $w$ whose content $\lambda$ is a partition may be decomposed into  standard subwords $w_1,w_2,\dots,w_{\lambda_1}$ as follows. To create $w_1$, we start with the rightmost $1$ in $w$. Then, we find the rightmost $2$ to the left of that $1$. If no $2$ lies to the left of our $1$, we take the rightmost $2$ in $w$. If there is no $2$ at all, we end the selection of $w_1$. If $2$ was selected, we take the rightmost $3$ to the left of that $2$. If there is no $3$ to the left, we take the rightmost $3$ in $w$ or end the selection if there is no $3$ at all. We continue in this way with the next integers $4,5,$ etc, until we end the process.

After we finish the selection, we let $w_1$ be the subword of $w$ consisting of the selected numbers written in the same order as in $w$.  After $w_1$ is formed, we delete the integers participating in $w_1$ from $w$ and find $w_2$ in the same way from the remaining word.  We continue this process $\lambda_1$ times.

Let $w$ be a standard word. The cocharge $\coch(i)$ of an element $i$ of $w$ is defined recursively as follows. Set $\coch(1) = 0$ and let
\be
\coch(i)=\begin{cases}
  \coch(i-1) + 1,  \quad {\rm if}\  i\ {\rm lies\ to\ the\ left\ of\ } (i-1)\ {\rm in}\ w, \\
  \coch(i-1), \quad {\rm if}\  i\ {\rm lies\ to\ the\ right\ of\ } (i-1)\ {\rm in}\ w.
\end{cases}
\ee
We define the cocharge $\coch(w)$ of the standard word  $w$ as the sum $$\coch(w)=
\sum_{i\in  w}\coch(i).$$

For an arbitrary word $w$ whose content is a partition $\lambda$, the cocharge of $w$ is given by the sum of the cocharges of its standard subwords $$\coch(w)=
\sum_{i=1}^{\lambda_1}\coch(w_i).$$

\begin{example} \label{cocharge example}
    Let 
    \be
w =  4 \, 2 \, 2 \, 3 \, 1 \, 1 \, 1 \, 2 \, 3. 
    \ee
Then,  $w$  has content  $\lambda = (3,3,2,1)$ and standard subwords
\be
w_1= 4\, 2\, 1\, 3 , \qquad w_2= 2\,3\,1,  \qquad w_3= 1\,2.
\ee

The cocharge of the elements of $w_1$ are
\be
\coch(1)=0, \qquad \coch(2)=1,\qquad \coch(3)=1,\qquad \coch(4)=2.
\ee 
Thus,  $\coch(w_1)= 0 +1+1+2  = 4.$ For $w_2$,  we have
\be
\coch(1)=0, \qquad \coch(2)=1, \qquad \coch(3)=1,  \qquad  
\ee 
so $\coch (w_2) = 0+1+1 = 2$. Similarly, we find $\coch(w_3)=0$.  Hence, the cocharge of $w$ is $$\coch(w)=4+2+0=6.$$
\end{example}

\subsection{The Lie algebras \texorpdfstring{$\g$}{} and \texorpdfstring{$\widetilde \g$}{}. }
Let $\mathfrak g$  denote the Lie algebra   $\sL_{n+1}(\mathbb C)$ of traceless $(n+1)\times (n+1)$ 
 complex matrices. Denote the set  $\{1,\,\dots,n\}$  by $I$ and  the set  $\{0,1,\,\dots,n\}$ by $\widetilde I$.

Let $E_{i,\,j}$ be the $(n+1)\times (n+1)$ matrix unit with 1 at the $i,j$ position and $0$ elsewhere. The standard Chevalley generators of $\g$ are given by the matrices
 \be
 h_i= E_{i,\,i}-E_{i+1,\,i+1}, \qquad x^+_i = E_{i,\,i+1}, \qquad x^-_i= E_{i+1,\,i}, \qquad i\in I.
 \ee
 
Denote by $\n^+,\n^-, \h$ the Lie subalgebras of upper triangular, lower triangular and diagonal traceless matrices, respectively.

 Let $\{\omega_i\}_{i\in I}$ be the fundamental weights of $\g$ satisfying $\omega_i(h_j) = \delta_{i,j}$ for all $i,j\in I$. Set $\omega_0 =  \omega_{n+1} = 0$ to be zero weights. Let $\{\alpha_i\}_{i\in I }$ be the set of simple roots of $\mathfrak g$, where $\alpha_i = 2\omega_i - \omega_{i-1}-\omega_{i+1}$. 

 Let $P$ and $Q$ (respectively, $P^+$ and $Q^+$) be the weight and root lattices (respectively, the cones of dominant weights in the the weight and root lattices)
 formed by all linear combinations of $\omega_i$ and $\al_i$, $i\in I$, with integer (respectively, non-negative integer) coefficients.

 For $\la\in P^+$, define the integer
$$
|\lambda|=\sum_{i=1}^n i \lambda(h_i)\in \mathbb Z_{\geq 0}.
$$

For each $\la\in P^+$, define a partition of length at most $n$ by the rule
\begin{equation*} \label{partincl}
 \lambda \mapsto \Big( \lambda(h_1)+\lambda(h_2)+\cdots+\lambda(h_n),\, \lambda(h_2)+\cdots+\lambda(h_n),\ \dots \ ,\, \lambda(h_n)\Big) \in \mathscr P_{|\lambda|}.
\end{equation*}
Conversely, given a partition $\lambda$ of length at most $n$, define a dominant weight by
$$\la \mapsto \mbox{wt}(\lambda) =  \sum_{j=1}^{{\ell(\la)}} (\lambda_j-\lambda_{j+1})\,\omega_j
\in P^+.$$
Thus, we identify the set of dominant weights $P^+$ with the set of partitions  of length at most $n$. 

Let $\preceq$ be the partial order of $P$ defined  by $\lambda \preceq \mu$ if and only if $\mu -\lambda \in Q^+$. This partial order  coincides with  the partial order $\preceq$ previously defined on the set of partitions. 

The root  $\theta= \sum_{i\in I}\al_i$ is the maximal root of $\g$ with respect to $\preceq$. Denote by  $x_\theta^+= E_{1,\,n+1}$,  $x_\theta^-= E_{n+1,\,1}$ the corresponding root vectors associated to $\theta$.

\medskip

 Let $\mathbb C[t^{\pm 1}]$ be  the algebra of Laurent polynomials in one variable. The loop algebra over $\mathfrak g$ is the vector space $\widetilde{\mathfrak g} = \mathfrak g \otimes \mathbb C[t^{\pm1}]$ with the Lie bracket   
 $$[x\otimes t^r, y\otimes t^s] =  [x,y]\otimes t^{r+s}, \qquad x,y \in \mathfrak g,\  r,s \in \mathbb Z.
 $$ 

Let $\widetilde{\mathfrak n}^\pm = {\mathfrak n}^\pm \otimes \mathbb C[t^{\pm1 }], \ \widetilde{\mathfrak h} = \mathfrak h \otimes \mathbb C[t^{\pm1}]$.
For $i\in I$ and $r\in \mathbb Z$, we simplify notation by setting
$$
x_{i,r}^{\pm } = x_i^{\pm }\otimes t^r,\qquad h_{i,r}= h_i\otimes t^r.
$$

 We identify $\g$ with the  Lie subalgebra $\g\otimes 1 \subseteq \widetilde \g$. Under this identification, we further abbreviate the elements $x_{i,0}^\pm$, $h_{i,0} \in \widetilde \g$ to  $x_i^\pm$, $h_i$, respectively. Let $x_0^\pm = x_\theta^\mp \otimes t^{\pm 1}\in \widetilde \g$.
 The set $\{x_i^\pm, h_i\}_{i\in \widetilde I}$ generates $\widetilde \g$ as a Lie algebra.

Let $U(\g)$ and $U(\widetilde \g)$ be the universal enveloping algebras of $\g$ and $\widetilde \g$, respectively.

\subsection{Finite-dimensional \texorpdfstring{$\g$}{}-modules}

Let $\mbox{Rep}(\mathfrak g)$ be the abelian category of the finite-dimensional left $\mathfrak{g}$-modules. The category $\mbox{Rep}(\g)$ is semisimple and the isomorphism classes of irreducible objects are in bijection with $P^+$. 

For $\lambda \in P^+$, denote by $V(\lambda)$ the irreducible $\g$-module of highest weight $\lambda$. The module $V(\lambda)$ is the unique $\g$-module generated by a non-zero vector $v$ satisfying
\begin{equation} \label{highest weight}
    h_i\cdot  v = \lambda(h_i)v, \qquad  \n^+\cdot v =0, \qquad (x_i^-)^{\lambda(h_i)+1}\cdot v =0, \qquad   i \in I.
\end{equation}

For $V\in \mbox{Rep}(\mathfrak g)$ and $\lambda \in P^+$, let $[V: V(\lambda)]\in \mathbb Z_{\geq 0}$ be the multiplicity of $V(\lambda)$ in $V$.  For  $\mu \in \mathfrak h^*$, the subspace of $V$ of weight $\mu$ is 
$$V_\mu = \{v\in V: h\cdot v = \mu(h)v \,\,\,\,\mbox{for all}\,\,\,\, h\in \mathfrak h\}.$$

Let $\omega:\g \to \g$ be the antiautomorphism of Lie algebras given by the transposition of matrices. We have $\omega(x_i^{\pm }) = x_i^{\mp }, \, \omega(h_i) = h_i, \ i \in I.$ 
For $V\in \mbox{Rep}(\g)$, let $V^{\,\vee}\in \mbox{Rep}(\g)$ denote the dual module. As a vector space, the module $V^{\vee}$ is dual to $V$ and the left $\g$-action is given by  
\begin{equation*}
    (x\cdot f)(v) = f(\omega(x)\cdot v), \qquad  x\in \g, \ f\in V^\vee, \ v\in V. 
\end{equation*}
We have the isomorphism of $\g$-modules 
$$V(\la)^{\vee}\simeq V(\la),\qquad  \la \in P^+. $$

\subsection{Finite-dimensional \texorpdfstring{$\widetilde \g$}{}-modules.} Let $\mbox{Rep}(\widetilde \g)$ be the abelian category of the finite-dimensional left $\widetilde \g$-modules. The category  $\mbox{Rep}(\widetilde \g)$ is not semisimple.

For $a\in \mathbb C^\times$, the evaluation map $\ev_a: \widetilde \g \rightarrow \g$ is the Lie algebra homomorphism given by 
$$
x\otimes t^{r} \mapsto a^r x, \qquad x\in \g,\ r\in \mathbb Z.
$$ 
For $V\in \mbox{Rep}(\g)$, denote by $V_a \in \mbox{Rep}(\widetilde \g)$  the evaluation module obtained from $V$ by the pullback  along $\mbox{ev}_a$. For $\lambda \in P^+$, we simply denote by $V_a(\la)$ the  evaluation module $(V(\lambda))_a$ obtained from the irreducible $\g$-module $V(\lambda)$. 

It is well-known that any non-trivial finite-dimensional irreducible $\widetilde \g$-module is isomorphic to a tensor product of the form, see \cite{CP86},
\begin{equation*} \label{eval}
    V_{a_1}(\lambda_1)\otimes \cdots \otimes V_{a_k}(\lambda_k), \qquad \mbox{where}\quad \lambda_i \in P^+\setminus\{0\},\ a_i \in \mathbb C^\times, \ a_i\neq a_j \ \mbox{ for all }  \ i\neq j.
\end{equation*}

Moreover, any two such tensor products are isomorphic as $\widetilde \g$-modules if and only if they are obtained from each other by a permutation of tensor factors.

Let $\widetilde \omega: \widetilde \g \to \widetilde \g$ be the unique antiautomorphism of Lie algebras extending $\omega:\g\to \g$ and such that 
$$\widetilde\omega (x\otimes t^r) = \omega(x)\otimes t^{-r},\qquad r\in \mathbb Z.$$   
For $W\in \mbox{Rep}(\widetilde \g)$, let $W^{\,\vee}\in \mbox{Rep}(\widetilde\g)$ denote the dual module. As a vector space, the module $W^{\vee}$ is dual to $W$ and the left $\widetilde \g$-action is given by  
\begin{equation} \label{tildeomega}
    (x\cdot f)(w) = f(\widetilde \omega(x)\cdot w), \qquad x\in \widetilde \g, \ f\in W^\vee, \ w\in W.
\end{equation} 
We have the isomorphism of $\widetilde \g$-modules
\begin{equation} \label{veedual}
    V_a(\la)^{\vee}\simeq V_{a^{-1}}(\la), \qquad a\in \mathbb C^\times, \ \la\in P^+.
\end{equation}

For $\lambda \in P^+\setminus\{0\}, a\in \mathbb C^\times$, define the local Weyl module $W_a(\lambda)$ as the quotient of $U(\widetilde \g)$ by 
the left ideal generated by the elements
\begin{equation*}  h_{i,r}- a^r\lambda(h_i), \qquad  x_{i,r}^+, \qquad  (x_i^-)^{\lambda(h_i)+1}, \qquad  r\in \mathbb Z,\ i\in I.
\end{equation*}

 Recall that the socle of a module $M$ is the maximal semisimple submodule of $M$ and that the head of $M$ is the maximal semisimple quotient of $M$.

 The following theorem summarizes well-known facts about the local Weyl modules.
\begin{thm} We have the following properties of the local Weyl modules. \label{localweyl}
\begin{enumerate}
\item The local Weyl module $W_a(\lambda)$ is a non-zero cyclic finite-dimensional $\widetilde \g$-module. 
\item As a $\g$-module, the local Weyl module is isomorphic to the tensor product of fundamental $\g$-modules corresponding to the columns of the partition $\la$
   $$W_a(\lambda) \simeq V(\omega_1)^{\otimes\lambda(h_1)}\otimes \cdots \otimes V(\omega_n)^{\otimes\lambda(h_n)}.$$
   \item If $W$ is a $\widetilde \g$-module which has a cyclic non-zero vector $w$ satisfying the relations
\begin{equation*} \label{weylrel} h_{i,r}\cdot w =   a^r\lambda(h_i),  \qquad  x_{i,r}^+\cdot w = 0, \qquad  (x_i^-)^{\lambda(h_i)+1}\cdot w = 0, \qquad  r\in \mathbb Z,\ i\in I,\end{equation*}
then there exists a surjective map of $\widetilde \g$-modules $W_a(\la)\twoheadrightarrow W$. 
\item The head of the local Weyl module $W_a(\la)$ is isomorphic to $V_a(\la)$. If $|\la|\leq n$, then  the socle  of the local Weyl module $W_a(\la)$ is isomorphic to $V_a(\omega_d)$, where $d = |\la|$. 

\item Let $V$ be a $\widetilde \g$-module such that $V\simeq W_a(\la)$ as $\g$-modules.  Suppose that the head of $V$ contains $V_a(\la)$ and that the socle of $V$ is $V_a(\omega_d)$, where $d = |\la|\leq n$. Then, $ V\simeq W_a(\la)$ as $\widetilde \g$-modules.
   \end{enumerate}
\end{thm}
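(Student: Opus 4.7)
Parts (1)--(3) are the classical Chari--Pressley theorem on local Weyl modules; I outline the scheme. Part (3) is essentially a restatement of the defining relations: any vector satisfying the listed identities generates a quotient of $W_a(\la)$ by the universal property, and if that vector is cyclic in $W$, the quotient exhausts $W$. For (1), cyclicity is built into the definition, so the content is finite-dimensionality. The plan is to use $x_{i,r}^+ v = 0$ and $h_{i,r}v = a^r\la(h_i)v$ to reduce $W_a(\la) = U(\widetilde\n^-)v$, then combine Garland-type identities in $U(\widetilde\n^-)$---derived from the integrability relations $(x_i^-)^{\la(h_i)+1}v = 0$---with a PBW argument to bound $\dim W_a(\la)$ above by $\prod_{i=1}^n (\dim V(\omega_i))^{\la(h_i)}$.

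For (2), I would produce the matching lower bound by exhibiting a surjection onto the tensor product of fundamental evaluation modules. Choosing generic distinct parameters $b_1,\dots,b_k\in\mathbb C^\times$ (one per column of $\la$, with $i_j$ the length of the $j$-th column), the tensor product $V_{b_1}(\omega_{i_1})\otimes\cdots\otimes V_{b_k}(\omega_{i_k})$ is itself a Weyl module on a generic cyclic vector; a fusion-product limit $b_j\to a$ in the spirit of \cite{CL} produces a $\widetilde\g$-module of the same total dimension on which the image of the highest-weight tensor satisfies the Weyl relations at $a$, hence is surjected by $W_a(\la)$. Combined with the upper bound from (1), this pins down both the dimension and the $\g$-module structure.

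For (4), the head is $V_a(\la)$ by the classification of irreducible finite-dimensional $\widetilde\g$-modules: any irreducible quotient is a tensor product of evaluation modules whose joint eigenvalues $\{a^r\la(h_i)\}$ on the $h_{i,r}$ force it to be $V_a(\la)$. For the socle when $d:=|\la|\leq n$, I would dualize using \eqref{veedual} and the tensor decomposition from (2), then identify the head of $W_a(\la)^\vee$: the fundamental $\g$-module $V(\omega_d)$ appears with multiplicity one as the minimal-dimensional summand of $V(\omega_1)^{\otimes\la(h_1)}\otimes\cdots\otimes V(\omega_n)^{\otimes\la(h_n)}$ when $d\leq n$, and the evaluation structure at $a$ is the only compatible $\widetilde\g$-lift. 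For (5), the hypotheses let me lift a generator from $V_a(\la)\subseteq\mathrm{head}(V)$ to a vector in $V$ satisfying the Weyl relations---the head controls the eigenvalues of $h_{i,0}$ and the vanishing of $x_i^+\cdot$ and $(x_i^-)^{\la(h_i)+1}\cdot$, while the socle constraint together with $V\simeq W_a(\la)$ as $\g$-modules forces the correct eigenvalues on all $h_{i,r}$---so that (3) yields a surjection $W_a(\la)\twoheadrightarrow V$ which is an isomorphism by the assumed equality of $\g$-dimensions.

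I expect the main obstacle to be the socle identification in (4): showing $V_a(\omega_d)$ is the socle requires not merely that $V(\omega_d)$ embeds as a $\g$-submodule with multiplicity one, but that the induced $\widetilde\g$-action is precisely evaluation at $a$, and that no larger semisimple $\widetilde\g$-submodule exists. The cleanest route is likely to exploit the duality $W_a(\la)^\vee$ together with the universal property in (3) to characterize both head and socle uniformly, but the multiplicity and minimality arguments for $V(\omega_d)$ in the tensor product require the hypothesis $|\la|\leq n$ in an essential way.
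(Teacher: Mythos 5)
Your outlines for (i)--(iii) track the standard references (the paper simply cites \cite{CP}, \cite{CL} for (i)--(ii) and calls (iii) obvious), and for the socle half of (iv) you candidly stop at the "main obstacle" --- which is indeed the substantive point; the paper does not argue it either but imports it from \cite{KN}. So the real comparison is part (v), the only statement the paper proves, and there your argument has a genuine gap.

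You lift a highest-weight vector $v_0\in V_\la$ and check the Weyl relations at $v_0$, then assert that (iii) "yields a surjection $W_a(\la)\twoheadrightarrow V$." But (iii) applies only to a module that is \emph{cyclic} on the vector in question, and nothing in the hypotheses of (v) says $V$ is generated by $v_0$; a priori you only get a surjection $W_a(\la)\twoheadrightarrow \widetilde V:=U(\widetilde\g)\cdot v_0\subseteq V$. Moreover you invoke the socle hypothesis only to "force the correct eigenvalues on all $h_{i,r}$," which is not where it is needed: the eigenvalues already follow from $\dim V_\la=1$ and the head condition (as in the paper, $h_{i,r}v_0-a^r\la(h_i)v_0\in V_\la\cap\Ker\pi=\{0\}$). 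The socle hypothesis is what rescues cyclicity: since the socle of $V$ is the irreducible $V_a(\omega_d)$, every nonzero submodule, in particular $\widetilde V$, contains it; and since $W_a(\la)$ has socle $V_a(\omega_d)$ with $[W_a(\la):V(\omega_d)]=1$, any proper quotient of $W_a(\la)$ kills $V(\omega_d)$, so the surjection $W_a(\la)\to\widetilde V$ is an isomorphism, and then $\dim V=\dim W_a(\la)$ forces $\widetilde V=V$. Without this step your argument would "prove" too much: for $\la=(2)$ the module $V=V_a(2\omega_1)\oplus V_b(\omega_2)$ with $b\neq a$ matches $W_a(2)$ as a $\g$-module and has head containing $V_a(2\omega_1)$, yet admits no surjection from $W_a(2)$; only the socle condition (used as above) excludes it. To repair your proof of (v), replace the claimed surjection onto $V$ by the surjection onto $\widetilde V$ and run the socle/multiplicity-one argument just described.
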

\begin{proof}
    Properties (i), (ii) can be found in \cite{CP}, \cite{CL}. Property (iii) is obvious. Property (iv) is found in \cite{KN}. We now prove (v).

    Since the head of $V$ contains $V_a(\la)$, there exists a short-exact sequence of $\widetilde\g$-modules
    $$0\to \Ker \pi \to V \xrightarrow{\pi} V_a(\la) \to 0.$$
    From the $\g$-modules isomorphism $V\simeq W_a(\la)$, we know that  $\dim V_\la = 1$. In particular, we must have $\pi(v_0) = v_0'$, where $v_0\in V_\la \subset V$ and $v_0'\in V_a(\la)$ are the corresponding highest weight vectors. Set $\widetilde V = U(\widetilde \g)\cdot v_0\subseteq V.$ We show that $v_0$ satisfies the relations in (iii). Recall that $v_0$ satisfy the relations in \eqref{highest weight}. 
    Clearly, we have
$$h_{i,r}v_0-a^r\la(h_i)v_0\in V_\la \cap \Ker \pi = \{0\}.$$
  Since  $x_{i,r}^+ = \frac{1}{2}[h_{i,r}, x_i^+]$ in $\widetilde \g$, the relation $x_{i,r}^+\cdot v_0  = 0$ follows from $x_i^+\cdot v_0 = 0$. Therefore, $\widetilde V$ is a quotient of $W_a(\la)$. Since the socle of $V$ is $V_a(\omega_d)$, then $V_a(\omega_d)\subseteq \widetilde  V$. Now, since $W_a(\la)$ has socle $V_a(\omega_d)$ and $[W_a(\la):V(\omega_d)]=1$, any non-trivial quotient of $W_a(\la)$ does not contain $V(\omega_d)$. Therefore, it follows that $\widetilde V \simeq W_a(\la)$ as $\widetilde \g$-modules. Now, since $\dim V = \dim W_a(\la)$, we conclude that $\widetilde V = V$.
\end{proof}

\medskip

Now, we define the general local Weyl module. First we prepare some notation.

For a partition $\la$ and a sequence of complex numbers $\bs a = (a_1,\cdots, a_{\la_1})$, we refer to $\bs a$ as the sequence of evaluation parameters of $\la$ and we think of the number $a_i$ as a label for the $i$-th column of the Young diagram of $\la$.

For $\la \vdash d_1, \ \mu \vdash d_2$ and $\bs a = (a_1,\dots, a_{\la_1}), \bs b = (b_1,\dots, b_{\mu_1}),$ we define the concatenation partition $\nu = \la \sqcup \mu$ and the concatenated sequence of evaluation parameters $\bs c = \bs a \sqcup \bs b$ as follows. Define $\nu  \vdash  (d_1+d_2)$ to be the partition whose columns are the union of the columns of $\la$ and $\mu$. Let each column of $\nu$ keep the same evaluation parameter it had in $\la$ or $\mu$, and let $\bs c$ be the sequence of evaluation parameters corresponding to the columns of $\nu$. 

For $\la \vdash d$ and $\bs b = (b_1,\dots, b_{\la_1})$, we may write $\la$ and $\bs b$ as a concatenation by collecting columns associated to the same evaluation parameters. More precisely, let $\{a_1,\dots, a_s\}$ be the set of distinct evaluation parameters among all $b_i$ and let $\la^{(i)}$ be the partition whose columns are the columns of $\la$ associated with the evaluation parameter $a_i$. Then, we have $\la = \la^{(1)}\sqcup \cdots \sqcup \la^{(s)}$ and $\bs b = \bs a_1\sqcup \cdots \sqcup \bs a_s$, where $\bs a_i = (a_i,\dots, a_i)$. We refer to the partitions $\la^{(i)}$ and sequences of evaluation parameters $\bs a_i$ as the splitting of $\la$ and $\bs b$. The splitting of $\la$ and $\bs b$ is unique up to simultaneous permutation of $\la^{(i)}$ and $a_i$.

For $\la \vdash d$ and $\bs b$, we define the local Weyl module $W_{\bs b}(\la)$ as follows.  Let $\la = \la^{(1)}\sqcup \cdots \sqcup \la^{(s)}$ and $ \bs b = \bs a_1\sqcup \cdots \sqcup \bs a_s$ be the  splitting of $\la $ and $\bs b, $ with $\bs a_i = (a_i,\dots,a_i)$. Then, the tensor product of local Weyl modules  \begin{equation}\label{weylmodule}
    W_{\bs b}(\la)=W_{a_1}(\lambda^{(1)})\otimes \cdots \otimes  W_{a_s}(\lambda^{(s)})
\end{equation}  is called 
the local Weyl module of highest $\ell$-weight  \begin{equation*} 
    \bigg(\sum_{j=1}^s\frac{\la^{(j)}(h_1)}{x-a_j},\dots, \sum_{j=1}^s\frac{\la^{(j)}(h_n)}{x-a_j} \bigg),
\end{equation*}
where $x$ is a  formal variable.

Alternatively, the local Weyl module $W_{\bs b}(\la)$ is isomorphic to the quotient of  $U(\widetilde \g)$ by the left ideal generated by the elements
\begin{equation*}  h_{i,r} -\sum_{j=1}^s \lambda^{(j)}(h_i)a_j^r, \qquad  (x_i^-)^{1+\sum_{j=1}^s\lambda^{(j)}(h_i)}, \qquad  {x_{i,r}^+}, \qquad  r\in \mathbb Z, \ i\in I.\end{equation*}
Moreover, the tensor product of evaluation modules $V_{a_1}(\lambda^{(1)})\otimes \cdots \otimes  V_{a_s}(\lambda^{(s)})$ is the corresponding unique irreducible quotient of $ W_{\bs b}(\la)$.

\subsection{Symmetric group and extended affine symmetric group}
Let $S_d$ be the symmetric group on $d$ elements. The group $S_d$ is generated by  the simple transpositions $\sigma_i = (i,i+1), 1\leq i <d,$ satisfying the defining relations
\begin{equation} \label{symrel}
    \sigma_i^2=1,  \qquad \sigma_i\sigma_{i+1}\sigma_i = \sigma_{i+1}\sigma_i\sigma_{i+1}, \qquad  \sigma_i\sigma_j = \sigma_j\sigma_i, \quad  |i-j|> 1.
\end{equation}

The group $S_d$ has a right action on $\mathbb Z^d$ of permuting the coordinates.  The extended affine symmetric group is the semi-direct product $\widetilde S_d = S_d\ltimes \mathbb Z^d $. Let $t_j^{\pm 1} = (1, \pm \bs{e}_j)\in \widetilde{S}_d$, where $\bs e_j$ is the $j$-th standard generator of $\mathbb Z^d$. Then, the group $\widetilde S_d$ is 
 generated by the elements $\sigma_i,\ 1\leq i <d,\,\, t_j^{\pm 1},\  1\leq i \leq d,$ satisfying the defining relations (\ref{symrel}) together with
\begin{align*} \label{extsymrel}
		t_it_j=t_jt_i, \qquad t_it_i^{-1} = t_i^{-1}t_i = 1, \qquad t_j\sigma_i = \sigma_it_j,\ \ j\not\in \{i,\,i+1\}, \qquad \sigma_it_i = t_{i+1}\sigma_i.
	\end{align*}

Let $\C S_d, \C\Z^d,\C\widetilde S_d$ be the corresponding group algebras. We identify $\mathbb C \mathbb Z^d$ with the algebra of multivariate Laurent polynomials $ \mathbb C[t_1^{\pm 1}, \dots, t_d^{\pm 1}]$. Then,
\be
\mathbb C \widetilde S_d= \mathbb C S_d \otimes \mathbb C[t_1^{\pm 1},\dots,t_d^{\pm 1}]
\ee
with the product given by
\begin{equation} \label{semidirect}
    (\sigma \otimes f)(\tau \otimes g) = \sigma\tau \otimes (f\cdot \tau)g, \qquad f,g \in \mathbb C[t_1^{\pm 1}, \dots, t_d^{\pm 1}],\ \sigma, \tau \in \mathbb CS_d. 
\end{equation}
We often omit the symbol $\otimes $ and simply write $\sigma f \in  \mathbb C \widetilde S_d$ instead of $\sigma \otimes f$.

\subsection{Finite-dimensional \texorpdfstring{$S_d$}{}-modules} Let $\mbox{Rep}(S_d)$ be the abelian category of the  finite-dimensional right $S_d$-modules.  The category $\mbox{Rep}(S_d)$ is semisimple and the isomorphism classes of irreducible $S_d$-modules are in bijection with $\mathscr P_d$. 

For $\lambda\vdash d$, denote by $L(\la)$ the corresponding irreducible $S_d$-module.  The module $L(\lambda)$ is the cyclic $S_d$-module generated by the Young symmetrizer of $\lambda$, see \cite{FH} for an explicit construction. 

The $S_d$-modules $L(d)$ and $L(1^{\{d\}})$ are the $1$-dimensional trivial and sign representations, respectively. For $\la\vdash d$, we have the isomorphism of $S_d$-modules
\begin{equation} \label{transpose}
    L(\lambda)\otimes L(1^{ \{ d \}}) \simeq L(\lambda^t).
\end{equation}

 Let $\tau: S_d\to S_d$ be the group antiautomorphism  given by the inversion $\tau(\sigma) = \sigma^{-1}, \ \sigma \in S_d.$ For $L\in \mbox{Rep}(S_d)$, let $L^{\,\vee}\in \mbox{Rep}(S_d)$ denote the dual module. As a vector space, the module $L^{\vee}$ is dual to $L$ and the right $S_d$-action is given by  
 \begin{equation*}
     (f\cdot \sigma)(u) = f(u\cdot \tau(\sigma)), \qquad \sigma \in S_d, \ f\in L^\vee, \ u\in L.
 \end{equation*}
We have the isomorphism of $S_d$-modules
$$L(\la)^{\vee} \simeq L(\la), \qquad \la \vdash d. $$

Let $V\in \mbox{Rep}(S_d)$.  Denote by $[V:L(\lambda)]\in \mathbb Z_{\geq 0 }$ the multiplicity of $L(\lambda)$ in $V$. We view the character of $V$ as the $\mathbb Z_{\geq 0}$-valued  map 
$$
\mbox{ch}\,V:\ \mathscr P_d \rightarrow \mathbb Z_{\geq 0}, \qquad \lambda\mapsto [V:L(\lambda)].
$$
Let $\chi_\lambda=\mbox{ch}\, L(\lambda)$. Then, 
\begin{equation*}
    \label{ch}  \mbox{ch}\,V = \sum_{\lambda \, \vdash\, d} [V:L(\lambda)] \chi_\lambda.
\end{equation*} 
Clearly, $V,W\in \mbox{Rep}(S_d)$ are isomorphic as $S_d$-modules if and only if $\mbox{ch}\, V = \mbox{ch}\, W$.

\subsection{Finite-dimensional representations of \texorpdfstring{$\widetilde S_d$}{}}
Let $\mbox{Rep}(\widetilde S_d)$ denote the abelian category of the finite-dimensional right $\widetilde S_d$-modules. The category $\mbox{Rep}(\widetilde S_d)$ is not semi-simple. 

For $a\in \mathbb C^\times$, the evaluation map $\mbox{ev}_a: \mathbb C  \widetilde S_d \rightarrow \mathbb C S_d$ is the algebra homomorphism satisfying
$$\sigma_i\mapsto \sigma_i, \quad  1\leq i<d, \qquad t_j^{\pm 1} \mapsto a^{\pm 1}, \quad 1\leq j \leq d.$$
Given  $M\in \mbox{Rep}(S_d)$, denote by $M_a \in \mbox{Rep}(\widetilde S_d)$ the evaluation module obtained by the pullback along $\mbox{ev}_a$. For $\lambda \vdash d$, we simply denote by $L_a(\lambda)$  the evaluation module  $(L(\lambda))_a$.  

It is known that any non-trivial finite-dimensional irreducible  $\widetilde S_d$-module is isomorphic to an affine Zelevinsky tensor product of the form, see \cite{Y},
$$L_{a_1}(\lambda_1)\widetilde \boxtimes \cdots \widetilde \boxtimes L_{a_k}(\lambda_k), \qquad \mbox{where}\quad \lambda_i \vdash d_i, \,\,\, d = d_1+\dots+d_k,\,\,\, a_i\in \mathbb C^\times,  \ a_i\neq a_j \ \mbox{ for all } \ i\neq j.$$
 Moreover, any two such tensor products are isomorphic as $\widetilde S_d$-modules if and only if they are obtained from each other by a permutation of tensor factors.

Let $\widetilde \tau: \widetilde S_d\to \widetilde S_d$ be the antiautomorphism of groups extending $\tau: S_d\to S_d$ and such that
$$\widetilde \tau(t_j^{\pm 1}) = t_j^{\mp 1}, \qquad 1\leq j \leq d. $$
For $M\in \mbox{Rep}(\widetilde S_d)$, let $M^{\,\vee}\in \mbox{Rep}(\widetilde S_d)$ denote the dual module. As a vector space, the module $M^{\vee}$ is dual to $M$ and the right $\widetilde S_d$-action is given by  
\begin{equation} \label{tildetau}
    (f\cdot s)(m)  = f(m\cdot\widetilde{\tau}(s)),\qquad s\in \widetilde S_d, \ f\in M^\vee, \ m\in M.
\end{equation}
We have the isomorphism of $\widetilde S_d$-modules
$$ L_a(\la)^{\vee} \simeq L_{a^{-1}}(\la). $$

\subsection{Zelevinsky tensor products}

Let $d = d_1+\dots+d_\ell$, $d_i\in \mathbb Z_{>0}$, be a composition of the integer $d$. We have the following  natural inclusion of algebras
\begin{align}
\begin{split} \label{affzel}
     \widetilde \iota :\  & \C \widetilde S_{d_1} \otimes \cdots \otimes \C \widetilde S_{d_\ell} \hookrightarrow \C \widetilde S_d, \\   
&1^{(j-1)}\otimes \sigma_i \otimes 1^{(\ell-j)} \,\,\mapsto\,\, \sigma_{i+d_1+\,\cdots\,+d_{j-1}}, \quad 1 \leq j \leq \ell,\,\, 1\leq i < d_j,\\
&1^{(j-1)} \otimes t_i \otimes 1^{(\ell-j)}\,\, \mapsto\,\, t_{i+d_1+\,\cdots\,+d_{j-1}}, \quad 1 \leq j \leq \ell, \,\, 1 \leq i \leq d_j.
\end{split}
\end{align}

For $ M_i\in \mbox{Rep}(\widetilde S_{d_i}), \ 1\leq i \leq \ell$, the affine Zelevinsky tensor product 
is the induced right $\widetilde S_d$-module
\begin{equation*} 
    M_1\widetilde \boxtimes \cdots \widetilde \boxtimes  M_\ell =  \mbox{Ind}^{\mathbb C \widetilde S_d}_{\mathbb C \widetilde S_{d_1}\otimes \, \cdots \, \otimes \, \mathbb C \widetilde S_{d_\ell}}( M_1\otimes \cdots \otimes  M_\ell).
\end{equation*} 

Let $\iota: \mathbb C S_{d_1}\otimes \cdots \otimes \mathbb C S_{d_\ell} \hookrightarrow \mathbb C S_d$ be the restriction of $\widetilde \iota$ to the subalgebra $\C S_{d_1}\otimes \cdots \otimes \C S_{d_\ell}$. For  $L_i\in \mbox{Rep}(S_{d_i}), 1\leq i \leq \ell$,  the Zelevinsky tensor product 
is the right $S_d$-module
$$ L_1\boxtimes \cdots \boxtimes L_\ell = \mbox{Ind}^{\mathbb C S_d}_{\mathbb C S_{d_1}\otimes \, \cdots \, \otimes \, \mathbb C S_{d_\ell}}(L_1\otimes \cdots \otimes L_\ell).$$

We note that the Zelevinsky tensor products $\widetilde \boxtimes, \ \boxtimes$ are commutative and associative up to isomorphism. Moreover, if $ M_i\in \mbox{Rep}(\widetilde S_d)$ are such that $ M_i \simeq L_i$ as $S_d$-modules, then there is an isomorphism of $S_d$-modules
\begin{equation} \label{forgetzele}
     M_1\widetilde \boxtimes \cdots \widetilde \boxtimes  M_\ell \simeq  L_1\boxtimes \cdots \boxtimes L_\ell.
\end{equation}

\subsection{Deformation lemmas}
In Section 5.1, we will use the following standard lemmas which deal with continuous deformations of algebras and $S_d$-modules.

 Let $R=\mathbb C[t_1,\dots,t_d]$ and let $f_1(\epsilon), \dots, f_s(\epsilon)\in R$ be polynomials which continuously depend on a parameter $\epsilon\in\R$. Denote by $I_\epsilon$ the ideal generated by  $f_1(\epsilon), \dots, f_s(\epsilon)$.
\begin{lem}\label{deformed ideal lemma}
Let $\epsilon_0>0$. If $\dim R/I_\epsilon = k $ holds  for all $0 < \epsilon<\epsilon_0$, then $\dim R/I_0\geq k$. \qed
\end{lem}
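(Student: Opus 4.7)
The lemma is the standard upper semicontinuity of fiber dimension for a continuously varying family of ideals. My plan is to reduce the problem to a finite-dimensional rank computation.

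First, I would truncate by degree. Let $R_N \subseteq R$ denote the subspace of polynomials of total degree at most $N$. For $N$ large enough, $R_N$ surjects onto $R/I_\epsilon$ for every $\epsilon$ in some right neighborhood $[0, \epsilon_1)$ of $0$; this is possible because $\dim R/I_\epsilon = k$ is constant for $\epsilon > 0$ and the family varies continuously, with uniformity guaranteed by Noetherianity of $R$. Consequently,
\[
\dim R_N - \dim(R_N \cap I_\epsilon) = k \qquad \text{for } 0 < \epsilon < \epsilon_1,
\]
and the inclusion $R_N \hookrightarrow R$ induces an injection $R_N/(R_N \cap I_0) \hookrightarrow R/I_0$, reducing the lemma to showing $\dim(R_N \cap I_0) \le \dim(R_N \cap I_\epsilon)$ for $\epsilon > 0$ small.

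Next, I would realize $R_N \cap I_\epsilon$ as the image (intersected with $R_N$) of a continuously varying linear map. For a sufficiently large $M$, uniform in $\epsilon$ by Noetherianity, set
\[
T_\epsilon \colon \bigoplus_{j=1}^s R_{M - \deg f_j(\epsilon)} \longrightarrow R_M, \qquad (g_j) \longmapsto \sum_{j=1}^s g_j \, f_j(\epsilon),
\]
so that $R_N \cap I_\epsilon = R_N \cap \operatorname{Im}(T_\epsilon)$. The matrix of $T_\epsilon$ in monomial bases has entries continuous in $\epsilon$, and a direct computation gives
\[
\dim(R_N \cap \operatorname{Im}(T_\epsilon)) = \operatorname{rank}(T_\epsilon) - \operatorname{rank}(\bar T_\epsilon),
\]
where $\bar T_\epsilon := \pi \circ T_\epsilon$ for $\pi \colon R_M \twoheadrightarrow R_M/R_N$. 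The surjectivity of $R_N \twoheadrightarrow R/I_\epsilon$ for $\epsilon > 0$ small forces $\bar T_\epsilon$ to be surjective, so $\operatorname{rank}(\bar T_\epsilon)$ stabilizes at the constant value $\dim R_M/R_N$ for small $\epsilon > 0$. Combined with the lower semicontinuity of $\operatorname{rank}(T_\epsilon)$---the set $\{\operatorname{rank} \ge r\}$ being open, defined by the non-vanishing of some $r \times r$ minor---this yields the required inequality $\dim(R_N \cap I_0) \le \dim(R_N \cap I_\epsilon)$, and hence $\dim R/I_0 \ge k$.

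The main obstacle is the uniform choice of the truncation parameters $N$ and $M$ in the first two steps, together with the bookkeeping needed to compare the ranks of $T_\epsilon$ and $\bar T_\epsilon$ across $\epsilon$. In the paper's setting the generators $f_i(\epsilon)$ depend polynomially on $\epsilon$, so the uniformity follows from Noetherianity of $\C[\epsilon]$; the lemma then reduces to the well-known upper semicontinuity of fiber dimension for finitely generated modules over a Noetherian ring.
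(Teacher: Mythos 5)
There is a genuine gap at the final inference, and it is not repairable at the stated level of generality. You need the inequality $\dim(R_N\cap I_0)\le\dim(R_N\cap I_\epsilon)$, i.e.
$\operatorname{rank}(T_0)-\operatorname{rank}(\bar T_0)\le\operatorname{rank}(T_\epsilon)-\operatorname{rank}(\bar T_\epsilon)$,
and you try to get it from (a) lower semicontinuity of $\operatorname{rank}(T_\epsilon)$ and (b) surjectivity of $\bar T_\epsilon$ for small $\epsilon>0$. But both ranks are only lower semicontinuous: $\operatorname{rank}(\bar T_0)$ may \emph{drop} at $\epsilon=0$, and since it enters with a minus sign, (a) and (b) together give no control on the difference. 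What you actually need is surjectivity of $\bar T_0$, i.e. $R_N+(I_0\cap R_M)=R_M$, a degree bound for representatives modulo $I_0$ --- which is essentially the statement being proved and does not follow from continuity. Indeed the reduction target is false under the lemma's stated hypotheses: take $d=1$, $s=1$, $f_1(\epsilon)=\epsilon t_1-1$. Then $\dim R/I_\epsilon=1$ for all $\epsilon\neq 0$, while $I_0=\langle -1\rangle=R$ and $\dim R/I_0=0$; here $R_N\cap I_0=R_N$ has larger dimension than $R_N\cap I_\epsilon$, and $\bar T_0$ is exactly the map that loses surjectivity (the zero of $f_1(\epsilon)$ escapes to infinity as $\epsilon\to 0$). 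So no proof from continuity alone can close this step; the lemma is used in the paper only for ideals whose generators have $\epsilon$-independent top homogeneous parts (the coefficients of $H^{\bs a}_{\la,n}$ have leading form $e_r(t_J)$), and it is precisely this extra structure that keeps $\bar T_0$ surjective (equivalently, supplies a uniform degree bound for representatives and prevents the quotient dimension from collapsing in the limit). A correct argument should invoke it explicitly, e.g.\ by using the fixed leading forms to get a single finite set of monomials spanning $R/I_\epsilon$ for all $\epsilon$ and then applying semicontinuity of the rank of one continuously varying matrix.

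Two secondary points: the claims that $N$ and $M$ can be chosen ``uniformly in $\epsilon$ by Noetherianity'' are not justified for a family that is merely continuous in $\epsilon$ (Noetherianity of $R$ or of $\C[\epsilon]$ helps only for algebraic families, and even then requires an argument such as generic flatness); and the domain $\bigoplus_j R_{M-\deg f_j(\epsilon)}$ of $T_\epsilon$ can jump when $\deg f_j(\epsilon)$ drops at $\epsilon=0$ (as in the counterexample above), which already undermines the assertion that the matrix of $T_\epsilon$ varies continuously through $\epsilon=0$.
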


Let $\{V_{\bs a}\}_{\bs a \in \mathbb (\mathbb C)^p}$ be a family of finite-dimensional $S_d$-modules depending on a parameter $\bs a \in \mathbb (\mathbb C)^p$. Suppose that $V_{\bs a} \simeq  V_0$ as a vector space for all $\bs a$. Let $\{v_1,\dots,v_k\}$ be a basis for $V_0$. 

\begin{lem}\label{deformed module lemma}
If the action of $S_d$ in $V_{\bs a}$ written in the basis $\{v_1,\dots,v_k\}$ depends on $\bs a$ continuously, then $V_{\bs a} \simeq V_0$ as $S_d$-modules for all $\bs{a} \in \mathbb (\mathbb C)^p$. \qed
\end{lem}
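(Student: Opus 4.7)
The plan is to exploit that $\mbox{Rep}(S_d)$ is semisimple and that, since $S_d$ is finite, representations are determined up to isomorphism by their trace characters. I will show that each multiplicity $[V_{\bs a}:L(\mu)]$ is a continuous function of $\bs a$ with values in $\mathbb Z_{\ge 0}$, and then conclude using the connectedness of $\mathbb C^p$.

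First, since the fixed basis $\{v_1,\dots,v_k\}$ identifies all the $V_{\bs a}$ with a single common vector space, each $\sigma\in S_d$ acts on $V_{\bs a}$ by a matrix $M_{\bs a}(\sigma)\in \mathrm{Mat}_{k\times k}(\mathbb C)$ whose entries depend continuously on $\bs a\in \mathbb C^p$. Consequently, the trace function $\bs a\mapsto \tr M_{\bs a}(\sigma)$ is continuous on $\mathbb C^p$ for every fixed $\sigma\in S_d$.

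Next, by the orthogonality relations for complex characters of the finite group $S_d$, the multiplicity of $L(\mu)$ in $V_{\bs a}$ can be recovered as
$$
[V_{\bs a}:L(\mu)] \;=\; \frac{1}{|S_d|}\sum_{\sigma\in S_d} \tr M_{\bs a}(\sigma)\cdot \tr\bigl(\sigma^{-1}|_{L(\mu)}\bigr),
$$
which is therefore a continuous function of $\bs a$. Since this function takes values in the discrete subset $\mathbb Z_{\ge 0}\subset \mathbb C$ and $\mathbb C^p$ is path-connected, it must be constant. Hence $[V_{\bs a}:L(\mu)]=[V_0:L(\mu)]$ for every $\mu\vdash d$, which gives $\mbox{ch}\, V_{\bs a}=\mbox{ch}\, V_0$, and semisimplicity of $\mbox{Rep}(S_d)$ then yields $V_{\bs a}\simeq V_0$ as $S_d$-modules.

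There is no substantive obstacle: the argument is a routine application of character theory together with the topological fact that a continuous map from a connected space to a discrete one is constant. The only technical point is ensuring that the fixed basis really makes ``continuity in $\bs a$'' well-defined at the level of the matrix representation, which is immediate from the hypothesis.
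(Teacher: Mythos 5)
Your proof is correct, and it is exactly the standard argument the paper has in mind when it states this lemma without proof (the \qed with no argument indicates it is taken as a routine fact): continuity of the matrix entries gives continuity of the character, the orthogonality relations express each multiplicity $[V_{\bs a}:L(\mu)]$ as a continuous $\mathbb Z_{\geq 0}$-valued function on the connected space $\mathbb C^p$, hence constant, and semisimplicity of $\mathrm{Rep}(S_d)$ upgrades equality of characters to an isomorphism of modules. No gaps.
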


\section{Classical and Affine Schur-Weyl duality}  \label{SW}

\subsection{The classical Schur-Weyl duality} Let $\mbox{Rep}_d(\g)$ be the full subcategory of $\mbox{Rep}(\g)$ given by the $\g$-modules $V$ satisfying the condition
$$[V:V(\lambda)]\neq 0 \implies |\lambda| =  d. $$

Let $\mathbb V = \mathbb C^{n+1}\simeq V(\omega_1)$ denote the standard $\g$-module. The group $S_d$ acts on the left of the $d$-th tensor power $\mathbb V^{\otimes d}$ by permuting tensor entries. This action commutes with the  usual left $\g$-action on $\mathbb V^{\otimes d}$ given by the coproduct $\Delta(x) = \sum_{j=1}^d \Delta_j(x)$, where $\Delta_j(x) = 1^{\otimes (j-1)}\otimes x \otimes 1^{\otimes (d-j)}, \ x\in \g$.

For $L\in \mbox{Rep}(S_d)$,  let
\begin{equation*} \label{functormodule}
	\mathcal F_d(L) = L \, \bigotimes_{\mathbb CS_d} \mathbb V^{\otimes d}.
\end{equation*} 
Regard $\mathcal F_d(L)$ as a $\g$-module by the inherited  action  $$x\cdot (u\otimes w) = u \otimes (x\cdot w),\qquad  x\in \mathfrak g,\ u\in L,\ w \in \mathbb V^{\otimes d}.$$ If $d\leq n$,  we have  an isomorphism of $\g$-modules, see \cite{FH},
\begin{equation} \label{gisos}
    \mathcal F_d( L(\lambda)) \simeq V(\mbox{wt}(\lambda)), \qquad \la \vdash d.
\end{equation}

Let  $\mathcal F_d: \mbox{Rep}(S_d) \rightarrow \mbox{Rep}_d(\g)$ be the functor of abelian categories sending an $S_d$-module $L$ to $\mathcal F_d(L)$ and a homomorphism of $S_d$-modules $f: L\to K$  to $f\otimes 1: \mathcal F_d(L) \to \mathcal 
F_d(K)$, where $1:\mathbb V^{\otimes d}\to \mathbb V^{\otimes d}$ is the identity map.

The functor $\mathcal F_d$ is exact and compatible with the Zelevinsky tensor product
\begin{equation} \label{functorzele}
    \mathcal F_d(L_1\boxtimes L_2) \simeq \mathcal F_{d_1}(L_1)\otimes \mathcal F_{d_2}(L_2), \qquad L_i\in \mbox{Rep}(S_{d_i}), \ d = d_1+d_2.
\end{equation}

\medskip

The next theorem is the categorical formulation of the well-known classical Schur-Weyl duality.

\begin{thm} \label{classchur} \normalfont
	If  $d \leq n$, the functor $\mathcal F_d: \mbox{Rep}(S_d) \rightarrow \mbox{Rep}_d(\g)$ defines an equivalence of categories. \qed
\end{thm}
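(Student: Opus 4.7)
The plan is to exploit semisimplicity on both sides, with essentially all of the content residing in the isomorphism \eqref{gisos}. First, I would note that $\Rep(S_d)$ is semisimple with simples $\{L(\la) : \la \vdash d\}$, and that $\Rep_d(\g)$ is semisimple as a full subcategory of $\Rep(\g)$, with simples $\{V(\mu) : \mu \in P^+,\ |\mu|=d\}$. The hypothesis $d \leq n$ is used precisely here: every $\la \vdash d$ has $\ell(\la) \leq d \leq n$, so $\la \mapsto \wt(\la)$ is a bijection between $\mathscr P_d$ and $\{\mu \in P^+ : |\mu|=d\}$. Combined with \eqref{gisos}, this shows that $\mathcal F_d$ sends the complete list of simples of the source bijectively to the complete list of simples of the target.

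Next, I would verify that $\mathcal F_d$ is additive and exact. Additivity is formal from the definition. Exactness follows from the fact that $\C S_d$ is semisimple, so $\mathbb V^{\otimes d}$ is a projective left $\C S_d$-module, and hence the functor $- \otimes_{\C S_d} \mathbb V^{\otimes d}$ is exact on right $\C S_d$-modules.

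With these ingredients in hand, essential surjectivity and full faithfulness follow by standard categorical arguments. For essential surjectivity, any $V \in \Rep_d(\g)$ decomposes as $\bigoplus_\la V(\wt(\la))^{\oplus m_\la}$, and additivity yields $V \simeq \mathcal F_d\bigl(\bigoplus_\la L(\la)^{\oplus m_\la}\bigr)$ via \eqref{gisos}. For full faithfulness, additivity reduces the statement to comparing $\Hom_{S_d}(L(\la), L(\mu))$ with $\Hom_\g(V(\wt(\la)), V(\wt(\mu)))$; by Schur's lemma, both spaces are either $0$ or $\C$, they are nonzero in exactly the same cases (namely $\la = \mu$), and the induced map is nonzero there because $\mathcal F_d$ sends $\id$ to $\id$.

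I do not anticipate a serious obstacle: the structural facts do all the work, with \eqref{gisos} supplying the one piece of nontrivial input. The only point that genuinely requires the hypothesis $d \leq n$ is the matching of the indexing sets of simple objects; without it, the map $\la \mapsto \wt(\la)$ would fail to surject onto degree-$d$ dominant weights, and any $L(\la)$ with $\ell(\la) > n$ would be sent to zero, so the functor would be neither essentially surjective nor injective on simples.
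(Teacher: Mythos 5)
Your argument is correct, and there is nothing in the paper to compare it against: Theorem \ref{classchur} is stated as the ``well-known classical Schur--Weyl duality'' with no proof given (the nontrivial input \eqref{gisos} is cited to \cite{FH}). Your proposal is the standard semisimplicity argument, and you correctly isolate where $d\leq n$ enters --- the bijection $\la\mapsto \mbox{wt}(\la)$ between $\mathscr P_d$ and degree-$d$ dominant weights, without which $\mathcal F_d$ kills $L(\la)$ for $\ell(\la)>n$ --- so it stands as a complete proof of the statement the paper takes for granted.
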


\subsection{The Affine Schur-Weyl duality}   
Let $\mbox{Rep}_d(\widetilde \g)$ be the full subcategory of $\mbox{Rep}(\widetilde \g)$ consisting of the $\widetilde \g$-modules $V$ such that $V\in \mbox{Rep}_d(\g)$  as a $\g$-module.

The vector space $\mathbb V[t^{\pm 1}] =\mathbb C^{n+1 }\otimes \mathbb C[t^{\pm 1}]$ is a left $\mathbb C[t^{\pm 1}]$-module with the action 
$$
f\cdot (v\otimes g) = v\otimes (fg), \qquad v\in \mathbb C^{n+1}, \ f,g\in \mathbb C[t^{\pm 1}],
$$
as well as a left $\widetilde \g$-module with the action
$$(x\otimes f)\cdot (v\otimes g) =  (x\cdot v)\otimes (fg), \qquad x\in \g.$$

Define the right action of the extended affine symmetric group $\widetilde S_d$ on
the $d$-th tensor power $(\mathbb V[t^{\pm 1}])^{\otimes d}$ by letting $S_d$ act by permuting tensor factors and  $t_j^{\pm 1}$ act by 
$$t_j^{\pm 1}\cdot (w_1\otimes \dots \otimes w_d) = w_1\otimes \dots \otimes (t^{\pm 1}\cdot w_j) \otimes \dots \otimes w_d, \qquad w_i \in \mathbb V[t^{\pm 1}], \ 1\leq i \leq d. $$

For $ M\in \mbox{Rep}(\widetilde S_d)$, let 
\begin{equation*}
\widetilde{ \mathcal F}_d(M)  = M \, \bigotimes_{\mathbb C\widetilde S_d} (\mathbb V[t^{\pm 1}])^{\otimes d}.
\end{equation*} 
Regard $\widetilde{\mathcal F}_d(M)$ as a left $\widetilde \g$-module by the inherited action of $\widetilde \g$  on $(\mathbb V[t^{\pm 1}])^{\otimes d}$.

We have an isomorphism of $\g$-modules ${\mathcal F}_d(M) \to \widetilde{\mathcal F}_d(M)$ given by
$$m\otimes (v_1\otimes \cdots \otimes v_d) \mapsto m\otimes ((v_1\otimes 1)\otimes \cdots \otimes (v_d\otimes 1)), \qquad  m\in M, \ v_i\in \mathbb V.$$  It follows that $\widetilde{\mathcal F}_d(M)$ is isomorphic to the the $\widetilde \g$-module obtained from ${\mathcal F}_d(M)$ by extending the $\g$-action to a $\widetilde\g$-action via
\begin{equation} \label{x_0action}
    x_0^{\pm}\cdot(m\otimes w) = \sum_{j=1}^d m\cdot t_j^{\pm 1}\otimes \Delta_j(x_\theta^{\mp})(w), \qquad m\in M, \ w\in \mathbb V^{\otimes d}.
\end{equation}

Let  $\widetilde{\mathcal F}_d: \mbox{Rep}(\widetilde S_d) \rightarrow \mbox{Rep}_d(\widetilde \g)$ be the functor of abelian categories sending an $\widetilde S_d$-module $M$ to $\widetilde{\mathcal F}_d(M)$ and a homomorphism of $\widetilde S_d$-modules $f:\ M\to N$ to $f\otimes 1: \widetilde{\mathcal F}_d(M) \to \widetilde{\mathcal 
F}_d(N)$.

\medskip

The next theorem is known as the affine Schur-Weyl duality. 

\begin{thm}[\cite{CP96}, \cite{Y}] \label{affineschur} \normalfont
	If  $d \leq n$, the functor $\widetilde{\mathcal F}_d: \mbox{Rep}(\widetilde S_d) \rightarrow \mbox{Rep}_d(\widetilde \g)$ defines an equivalence of categories. \qed
\end{thm}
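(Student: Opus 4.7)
The plan is to bootstrap from the classical Schur--Weyl duality in Theorem \ref{classchur}. First observe that $\widetilde{\mathcal F}_d$ genuinely lifts $\mathcal F_d$: the $\mathfrak g$-module isomorphism displayed just before \eqref{x_0action} identifies the $\mathfrak g$-restriction of $\widetilde{\mathcal F}_d(M)$ with $\mathcal F_d(M|_{S_d})$. Hence $\widetilde{\mathcal F}_d$ is automatically faithful and reflects isomorphisms, and every $\widetilde{\mathfrak g}$-equivariant morphism $f\colon\widetilde{\mathcal F}_d(M)\to\widetilde{\mathcal F}_d(N)$ is already forced by classical Schur--Weyl to have the form $g\otimes 1$ for a unique $S_d$-equivariant $g\colon M\to N$.

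To promote this to fullness, I would impose on $f$ the additional constraint of commuting with $x_0^{\pm}$. Using \eqref{x_0action}, this yields, for every $m\in M$ and every $w\in\mathbb V^{\otimes d}$,
\begin{equation*}
\sum_{j=1}^d \bigl(g(m\cdot t_j^{\pm 1}) - g(m)\cdot t_j^{\pm 1}\bigr)\otimes\Delta_j(x_\theta^{\mp})(w) = 0
\end{equation*}
in $N\otimes_{\C S_d}\mathbb V^{\otimes d}$. The assumption $d\leq n$ permits choices of $w$ (for instance $w = v_1^{\otimes d}$) for which the vectors $\Delta_j(x_\theta^{\mp})(w)$ are linearly independent; combining such choices with the right $\C S_d$-action on $w$ permits one to isolate each index $j$ and conclude $g(m\cdot t_j^{\pm 1}) = g(m)\cdot t_j^{\pm 1}$, so $g$ is $\widetilde S_d$-equivariant.

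For essential surjectivity, take $V\in \Rep_d(\widetilde{\mathfrak g})$. Classical Schur--Weyl applied to $V|_{\mathfrak g}$ produces an $S_d$-module $M$ and a $\mathfrak g$-isomorphism $\varphi\colon\mathcal F_d(M)\xrightarrow{\sim} V$. Transporting the $x_0^{\pm}$-action along $\varphi$ and using the same separation argument, one obtains unique operators $t_j^{\pm 1}\colon M\to M$ extending the $S_d$-action and such that \eqref{x_0action} reproduces the transported action. It then remains to check that these operators satisfy the defining relations of $\widetilde S_d$ --- mutual commutativity of the $t_j$'s, disjoint commutation with the $\sigma_i$, and the braiding $\sigma_i t_i = t_{i+1}\sigma_i$ --- by translating the corresponding relations on $V$ (the commutators $[x_0^+, x_0^-]$ and $[h_{i,\pm 1}, x_0^{\mp}]$, and the affine Serre relations involving $x_0^{\pm}$) back through $\varphi$.

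The main obstacle is this last verification: each relation must be decoupled from the sum $\sum_j t_j^{\pm 1}\otimes\Delta_j(\,\cdot\,)$ before it can be extracted, so one must repeatedly arrange weight vectors in $\mathbb V^{\otimes d}$ on which the relevant compositions of the $\Delta_j$'s separate the indices $j$. This separation works precisely because $d\leq n$, and constitutes the technical core of the proofs in Chari--Pressley \cite{CP96} and Yang \cite{Y}.
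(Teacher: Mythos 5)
The first thing to say is that the paper contains no proof of Theorem \ref{affineschur}: it is quoted from \cite{CP96} and \cite{Y} (the latter is due to Flicker, incidentally, not Yang), so your proposal can only be judged on its own terms. Its full-faithfulness half is the right idea — restrict along the $\g$-isomorphism $\mathcal F_d(M)\simeq\widetilde{\mathcal F}_d(M)$, invoke Theorem \ref{classchur} to write $f=g\otimes 1$, and then test $g$ against $x_0^{\pm}$ via \eqref{x_0action} — but your suggested test vector $w=v_1^{\otimes d}$ does not do what you claim. The vectors $\Delta_j(x_\theta^-)(v_1^{\otimes d})$ span the $d$-dimensional permutation module $\C[S_d/S_{d-1}]$ inside $\mathbb V^{\otimes d}$, which is not free over $\C S_d$; since the tensor product is taken over $\C S_d$, your displayed relation collapses to a single identity in the space of $S_{d-1}$-coinvariants of $N$, and this does not force the individual differences $g(m\cdot t_j^{\pm1})-g(m)\cdot t_j^{\pm1}$ to vanish. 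The correct (and standard) choice, and the only place $d\le n$ is used here, is a $w$ whose $j$-th slot carries $v_1$ (resp.\ $v_{n+1}$) and whose remaining $d-1$ slots carry pairwise distinct basis vectors different from both $v_1$ and $v_{n+1}$; then only the $j$-th summand survives, it lies in a weight space that is a free rank-one $\C S_d$-module, and one concludes $g(m\cdot t_j^{\pm1})=g(m)\cdot t_j^{\pm1}$. This is a fixable, but real, flaw in the argument as written.

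The genuine gap is in essential surjectivity. A separation argument of the above kind yields \emph{uniqueness} of operators $t_j^{\pm1}$ satisfying \eqref{x_0action}, not their \emph{existence}: after transporting the $x_0^{\pm}$-action to $\mathcal F_d(M)$ along $\varphi$, you must prove that the resulting operator has the form $\sum_j T_j\otimes\Delta_j(x_\theta^{\mp})$ for some family $(T_j)$ compatible with the $S_d$-structure (i.e.\ $T_{\sigma(j)}(m\cdot\sigma)=T_j(m)\cdot\sigma$), and nothing in your sketch produces such a family. Establishing this — for instance by a Schur--Weyl-type computation of $\Hom_{\g}(\g\otimes\mathcal F_d(M),\mathcal F_d(M))$, or by constructing the quasi-inverse functor directly on the weight space $V_{\omega_d}$ with the $t_j^{\pm1}$ realized by explicit loop-algebra elements — is, together with the verification of the $\widetilde S_d$-relations, precisely the technical core that you explicitly defer to \cite{CP96}, \cite{Y}. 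So as written your text is a plausible reduction plan plus a citation of the hard steps; it does not constitute an independent proof, and in effect does no more than the paper itself, which states the theorem with reference to those works.
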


The functor $\widetilde{\mathcal F}_d$ is exact and compatible with both the affine Zelevinsky tensor product and the evaluation homomorphisms, see \cite{Y},
\begin{align}
& \widetilde{ \mathcal F}_d(L_a) \simeq (\mathcal F_d(L))_a, \qquad L\in \mbox{Rep}(S_d), \ a\in \mathbb C^\times,
\\
    &\widetilde{\mathcal F}_d(M_1\widetilde \boxtimes M_2) \simeq \widetilde{\mathcal F}_{d_1}(M_1)\otimes \widetilde{\mathcal F}_{d_2}(M_2), \qquad M_i\in \mbox{Rep}(\widetilde S_{d_i}), \ d = d_1+d_2. \label{functortensor}
\end{align}

The following proposition shows that $\widetilde{\mathcal F}_d$ is also compatible with taking dual modules through the antiautomorphisms  $\widetilde \tau : \widetilde S_d \to \widetilde S_d, \ \widetilde \omega:\widetilde \g \to \widetilde \g$ defined in \eqref{tildetau}, \eqref{tildeomega}, respectively.

\begin{prop}  For any $M\in \on{Rep}(\widetilde S_d)$, there is an isomorphism of $\widetilde \g$-modules
  $\widetilde{\mathcal F}_d(M^{\vee}) \simeq \widetilde{\mathcal F}_d(M)^{\vee}. $
\end{prop}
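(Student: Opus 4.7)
The plan is to prove the isomorphism in two stages: first establish a $\g$-module isomorphism $\widetilde{\mc F}_d(M^{\vee})\simeq\widetilde{\mc F}_d(M)^{\vee}$ from classical Schur--Weyl duality, then upgrade it to a $\widetilde{\g}$-isomorphism by checking compatibility with $x_0^{\pm}$.

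Let $(\cdot,\cdot)$ be the standard $\g$-invariant symmetric bilinear form on $\mathbb V$ giving the self-duality $\mathbb V\simeq\mathbb V^{\vee}$, so that $(xv,w)=(v,\omega(x)w)$, extended multiplicatively and $S_d$-invariantly to $\mathbb V^{\otimes d}$. Paired with the canonical evaluation $M^{\vee}\times M\to\mathbb C$ this yields a bilinear form
$$
\Pi(f\otimes w,\,m\otimes u)=f(m)\,(w,u).
$$
Identifying $\mc F_d(N)=N\otimes_{\mathbb C S_d}\mathbb V^{\otimes d}$ with the $S_d$-invariants of $N\otimes\mathbb V^{\otimes d}$ (the averaging identification, valid because $S_d$ is finite), $\Pi$ restricts to a $\g$-invariant non-degenerate pairing $\mc F_d(M^{\vee})\otimes\mc F_d(M)\to\mathbb C$. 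Non-degeneracy uses non-degeneracy of each ingredient together with the equality of dimensions $\dim\mc F_d(M^{\vee})=\dim\mc F_d(M)$, which holds because $L(\la)^{\vee}\simeq L(\la)$ for every $\la\vdash d$. The resulting $\g$-isomorphism $\Phi:\mc F_d(M^{\vee})\to\mc F_d(M)^{\vee}$ transports, via the $\g$-module identifications $\widetilde{\mc F}_d(N)\simeq\mc F_d(N)$ recalled in the paper, to a $\g$-isomorphism $\widetilde{\mc F}_d(M^{\vee})\to\widetilde{\mc F}_d(M)^{\vee}$.

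It then suffices to check $\Phi(x_0^{\pm}\!\cdot\xi)=x_0^{\pm}\!\cdot\Phi(\xi)$, since together with $\g$-equivariance this upgrades $\Phi$ to a $\widetilde\g$-isomorphism (using that $\{x_i^{\pm},h_i\}_{i\in\widetilde I}$ generates $\widetilde\g$). Using the explicit formula \eqref{x_0action}, the dual-action relation $(f\cdot t_j^{\pm 1})(m)=f(m\cdot\widetilde\tau(t_j^{\pm 1}))=f(m\,t_j^{\mp 1})$ on $M^{\vee}$, and the fact that the $\widetilde\g$-action on $\widetilde{\mc F}_d(M)^{\vee}$ is defined via $\widetilde\omega(x_0^{\pm})=x_0^{\mp}$, both $\Pi(x_0^{\pm}\!\cdot(f\otimes w),\,m\otimes u)$ and $\Pi(f\otimes w,\,x_0^{\mp}\!\cdot(m\otimes u))$ reduce to the same sum
$$
\sum_{j=1}^d f(m\,t_j^{\mp 1})\,(w,\Delta_j(x_\theta^{\pm})u),
$$
after using the $\omega$-compatibility $(\Delta_j(x_\theta^{\mp})w,u)=(w,\Delta_j(x_\theta^{\pm})u)$ to move one sign across the form.

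The real content of the argument is bookkeeping: one must align the antiautomorphism $\widetilde\tau$ that defines $M^{\vee}$ with the antiautomorphism $\widetilde\omega$ that defines $\widetilde{\mc F}_d(M)^{\vee}$, so that $\widetilde\tau(t_j)=t_j^{-1}$ and $\widetilde\omega(x_0^{\pm})=x_0^{\mp}$ produce the matching sign patterns used above. Once these conventions are pinned down, the proof is essentially a one-line computation reducing the affine duality statement to the classical Schur--Weyl pairing.
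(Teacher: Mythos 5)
Your proposal is correct and takes essentially the same route as the paper: your pairing $\Pi$ is the paper's map $\varphi(f\otimes \bs v_i)(m\otimes w)=f(m)\,\bs v_i^*(w)$ written basis-free via the standard form on $\mathbb V$, and the decisive step—checking compatibility with $x_0^{\pm}$ using \eqref{x_0action}, $\widetilde\tau(t_j^{\pm 1})=t_j^{\mp 1}$ and $\widetilde\omega(x_0^{\pm})=x_0^{\mp}$—is the same computation. The only difference is presentational: you establish bijectivity through non-degeneracy of the pairing on $S_d$-invariants, whereas the paper simply exhibits $\varphi$ as a vector-space isomorphism and verifies well-definedness over $\C S_d$.
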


\begin{proof}
    Let $\{\bs v_1,\dots, \bs v_{d(n+1)}\}$ be a basis for $\mathbb V^{\otimes d}$ and let $\{\bs v_1^*,\dots, \bs v_{d(n+1)}^*\}$ be the corresponding dual basis for $\big(\mathbb V^{\otimes d}\big)^*=\big(\mathbb V^{\otimes d}\big)^\vee$.
Let $\varphi:  M^{\vee}\otimes_{\mathbb C S_d} \mathbb V^{\otimes d}\to \big(M\otimes_{\mathbb C S_d} \mathbb V^{ \otimes d} \big)^{\vee} $ be the isomorphism of vector spaces given by
$$\varphi(f\otimes \bs v_i)(m\otimes w) = f(m)\bs v_i^*(w), \qquad f \in M^{\vee},\ m\in M,\ w\in \mathbb V^{\otimes d}.$$
It can be verified that $\varphi(f\cdot \sigma \otimes \bs v_i) = \varphi(f\otimes\sigma \cdot \bs v_i)$  and that $\varphi(f\otimes \bs v_i)(m\cdot \sigma  \otimes w) = \varphi(f\otimes \bs v_i)(m\otimes \sigma \cdot w)$ for any $\sigma \in S_d$, so the map is well-defined. 

We are left to show that $\varphi$ commutes with the action of $\widetilde \g$. We check only for $x_0^+ = x_\theta^-\otimes t$, as the cases for $x\in \g$ and $x_0^-$ are analogous. Indeed, using \eqref{x_0action}, for all $m\in M, w\in \mathbb V^{\otimes d}$, we have
\begin{align*}
    & (x_0^+\cdot \varphi(f\otimes \bs v_i))(m\otimes w) = \varphi(f\otimes \bs v_i)( x_0^-\cdot (m\otimes w)) = \sum_{j=1}^d f(m\cdot t_j^{-1})\otimes {\bs v_i}^*(\Delta_j(x_\theta^+)(w)) \\
    & \hspace*{1.5cm} = \sum_{j=1}^d \varphi(f\cdot  t_j\otimes \Delta_j(x_\theta^-)\cdot {\bs v_i}^*)(m\otimes w) 
     =  \varphi(x_0^+\cdot (f\otimes \bs v_i))(m\otimes w).
\end{align*}
\end{proof}

\section{Garsia-Procesi-Tanisaki modules and their deformations} \label{TGP sec}

\subsection{Modified Kostka-Foulkes Polynomials}

Let $\lambda,\mu \vdash d$. For a semistandard Young tableau $T\in \mbox{SSYT}(\lambda, \mu)$,  the reading word  $\mbox{rw}(T)$ of $T$ is the word obtained by concatenating the fillings of the rows of $T$  from left to right and from bottom to top.

For instance, the word $w = 422311123$ in Example \ref{cocharge example} is the reading word of the following tableau
\vspace{0.1cm}
\begin{center}
T = \tiny \ytableaushort{11123,223,4} \, .
\end{center}
\vspace{0.25cm}

The cocharge of $T\in \mbox{SSYT}(\lambda, \mu)$ is defined as the cocharge of its reading word $$\coch(T)=\coch(\on{rw}(T)),$$
see Section \ref{sec cocharge}.

For $\lambda, \mu \vdash d$,  the modified Kostka-Foulkes polynomial $\widetilde K_{\lambda,\,\mu}(q)$ is the polynomial in a formal variable $q$ with non-negative integer coefficients given by
\be
\widetilde K_{\lambda,\,\mu}(q)=\sum_{T\in \on{\,SSYT}(\lambda,\,\mu)} q^{\on{coch}(T)}.
\ee 
The following properties of the modified Kostka-Foulkes polynomials are  straightforward.

\begin{enumerate}
    \item $\widetilde K_{\lambda,\,\mu}(q)\neq 0$ if and only if $\mu\trianglelefteq \lambda$.

    \item $\deg \widetilde K_{\lambda,\, \mu}(q)\leq n(\mu)$, where
$$n(\mu) = \sum_{j=1}^{\ell(\mu)} \mu_j(j-1). $$

\item $\widetilde K_{\lambda,\, \mu} (q)= 1$ if and only if $\lambda = (d)$.

\item $\widetilde K_{\lambda,\, \mu}(q) = q^{n(\mu)}$ if and only if $\lambda = \mu$. 

\item  The value of the  modified Kostka-Foulkes polynomial at $q=1$ is the Kostka number $\widetilde K_{\lambda,\, \mu}(1)= K_{\lambda, \,\mu}$.

\end{enumerate}

The usual Kostka-Foulkes polynomial  $K_{\lambda,\,\mu}(q)$ satisfies $K_{\lambda,\,\mu}(q)=q^{n(\mu)}\widetilde K_{\lambda,\,\mu}(q^{-1})$. In this article we only use the modified Kostka-Foulkes polynomials.


\begin{example} \label{kostka,d=4}
  Let $\mu = (1,1,1,1)$. We compute all the non-zero modified Kostka-Foulkes polynomials $K_{\lambda,\,\mu}(q)$. We first list all the   semi-standard Young tableaux of content $\mu$ as follows
\\
\begin{center} 
$T_1 = \tiny\ytableaushort{1,2,3,4}\,, \qquad T_2 = \ytableaushort{1234}\,, \qquad T_3 = \ytableaushort{12, 34}\,, \qquad T_{4} = \ytableaushort{13,24}\,,$
\end{center}
\vspace{0.3cm}
\begin{center} \hspace*{-1cm}
$\qquad T_5 =  \tiny\ytableaushort{134,2}\,, \qquad T_6 = \ytableaushort{124,3}\,, \qquad T_7 = \ytableaushort{123,4}\,, $
\end{center}
\vspace{0.3cm}
\begin{center} \hspace*{0cm}
$T_8 = \tiny\ytableaushort{12,3,4}\,, \qquad \quad\ T_9 = \ytableaushort{13,2,4}\,, \qquad\quad\ \   T_{10} = \ytableaushort{14,2,3}\,.\quad\  $
\end{center}

\vspace{0.3cm}
The cocharges of the above tableaux are
\begin{align*}
    &\coch(T_1) = 6, \qquad \coch(T_2) = 0, \qquad \coch(T_3) = 2, \qquad \coch(T_4) = 4, \qquad \coch(T_5) =  3, \\
    & \coch(T_6) = 2, \qquad \coch(T_7) = 1, \qquad \coch(T_8) = 3, \qquad \coch(T_9) = 4, \qquad \coch(T_{10}) = 5. 
\end{align*}
Thus, we have the following modified Kostka-Foulkes polynomials
$$\widetilde K_{\mu,\,\mu} = q^6,\quad \widetilde K_{(4),\,\mu}  = 1, \quad \widetilde K_{(2,2),\,\mu} = q^2+q^4, \quad \widetilde K_{(3,1),\,\mu} = q^1+q^2+q^3,\quad \widetilde K_{(2,1,1),\,\mu} = q^3+q^4+q^5. $$
\end{example}

\subsection{Tanisaki-Garsia-Procesi modules} \label{TGP section} Let $ t_1,\dots,t_d$ be  $d$ commuting variables, $d\in \mathbb Z_{>0}$. For  $J\subseteq \{1,\dots, d\}$, let 
$t_J = \{t_j: j\in J\}$. The $r$-th elementary symmetric polynomial in the variables $t_J$ is given by the sum 
$$e_r(t_J) = \sum_{i_1<i_2<\, \cdots \,<i_r}  t_{i_1}\dots t_{i_r}, $$
where the summation is over indexes $i_j \in J$.

Let $\lambda \vdash d$. For $0\leq n <\ell(\lambda)$, let $m_\lambda(n) = \sum_{i=n+1}^{\ell(\lambda)} \lambda_i$ be the number of boxes in the corresponding Young diagram of $\lambda$ in the rows numbered by $n+1,\dots, \ell(\lambda)$.  Note that $m_\lambda(0)= d$.

\bigskip

\begin{figure}[h!]
   \begin{tikzpicture} 
   \filldraw[gray!20] (0,0)--(0,2.4)--(0,2.4)--(3,2.4)--(3,2.4)--(3,1.5)--(3,1.5)--(2,1.5)--(2,1.5)--(2,0);
   \node at (-0.7,2.2) {$\la=$}; 
    \draw (0,0)--(0,4);
    \draw (0,0) -- (2,0);
    \draw (0,4) -- (4,4);
    \draw (4,4) -- (4,2.8);
    \draw (3,2.8) -- (4,2.8);
    \draw (3,2.8) -- (3,1.5);
    \draw (3,1.5) -- (2,1.5);

    \draw (2,1.5) -- (2,0);
    \draw[dashed] (0, 2.4) -- (5,2.4);
     \draw[dashed] (4, 4) -- (5,4);
    \node[anchor=west] at (0.4, 1.2) {$m_\la(n)$};
    \node[anchor=west] at (4.5, 3.2) {$n$};
    \draw[<->] (4.5,2.4) -- (4.5,4);
   
\end{tikzpicture} 
\caption{The partition $\la$ and $m_\la(n)$. }
\end{figure}

\bigskip

 For $0\leq n <\ell(\lambda)$, denote by $\mathbb J^{(d-n)} = \{J\subseteq \{1,\dots,d\}: \#J = d-n\}$ the set of all possible choices of $d-n$ elements in $\{1,\dots, d\}$. Let
$$\mathscr C_{\lambda, \,n}  = \{ e_r(t_J): J\in \mathbb J^{(d-n)}, \ \ d-n-m_\lambda(n)<r\leq d-n \}.$$
We define the set of Tanisaki polynomials $\mathscr C_\lambda$ by
$$\mathscr C_\lambda = \bigcup_{n =0 }^{\ell(\lambda)-1} \mathscr C_{\lambda, \,n}.$$

Let $\langle \mathscr C_\lambda \rangle\subseteq \mathbb C[t_1,\dots,t_d]$ be the ideal generated by $\mathscr C_\lambda$. The ideal  $\langle \mathscr C_\lambda\rangle$ is symmetric, i.e., it is invariant under the natural right action of $S_d$ on $\mathbb C[t_1,\dots, t_d]$. We refer to  the right $S_d$-module
$$R(\lambda)  = \mathbb C[t_1,\dots, t_d]/ \langle \mathscr C_\lambda \rangle$$
as the Tanisaki-Garsia-Procesi (TGP)  module $R(\la)$.\footnote{Our $R(\la)$ coincides with $R_{\la^t}$ in  \cite{GP}.}

If $\lambda, \mu\vdash d$ satisfy $\lambda \trianglelefteq \mu$, then $\mathscr C_{\mu}\subseteq \mathscr C_\lambda$. In particular, there exists a surjective homomorphism of $S_d$-modules $R({\mu})\twoheadrightarrow R(\lambda)$. 
In the case of  $\mu = (d),$ the ideal $\langle \mathscr C_{(d)} \rangle $ is the ideal of all symmetric polynomials and   $R{(d)}$ is also known as the coinvariant algebra. Any  TGP module $R({\lambda})$ with $\lambda\vdash d$ is a quotient of $R{(d)}$.  In particular,  $R(\lambda)$ is finite-dimensional with $\dim R(\lambda)\leq \dim R(d)=\ d!.$

The algebra $\mathbb C[t_1,\dots,t_d]$ has a  natural $\mathbb Z_{\geq 0}$-grading given by the homogeneous degree $\deg t_i=1$ for all $i$. This grading is $S_d$-invariant and the ideal $\langle \mathscr C_\la \rangle$ is also graded. Therefore,
the TGP module $R(\lambda)$ is a graded vector space 
$$
R(\lambda)=\bigoplus_{i=0}^\infty R(\lambda)[i],
$$
where each graded component $R(\lambda)[i]$ is an $S_d$-module. 
Define the graded character of $R(\lambda)$ in the formal variable $q$ by
$$
\on{gch} R(\lambda) = \sum_{\mu \, \vdash d}\bigg(\sum_{i=0}^\infty [R(\lambda)[i]: L(\mu)]q^i\bigg)\,\chi_{\mu}.
$$

The algebra $\mathbb C S_d \otimes \mathbb C[t_1,\dots,t_d]$ acts on the right of $R(\la)$ in the obvious way. Note that the grading of $R(\la)$ is compatible with the action of $\mathbb C S_d \otimes \mathbb C[t_1,\dots, t_d]$. One of the main results of \cite{GP} is that the graded character of $R(\lambda)$ is given in terms of the modified Kostka-Foulkes polynomials.

\begin{thm}[\cite{GP}]\label{GP thm} We have
$$\on{gch}\, R(\lambda) = \sum_{\mu \,\vdash d }\widetilde K_{\mu, \,\lambda^t}(q)\chi_\mu. 
$$
In particular, the graded component of maximal degree is $R(\la)[n(\la^t)] = L(\la^t)$.

\qed
\end{thm}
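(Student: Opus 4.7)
My plan proceeds by induction on $d = |\lambda|$; the base case $d=1$ is immediate. For the inductive step I would restrict $R(\lambda)$ from $S_d$ to the stabilizer $S_{d-1}\subset S_d$ of the index $d$. The key structural input is the Garsia--Procesi \emph{corner decomposition}: writing $\lambda^{(1)},\ldots,\lambda^{(r)}$ for the partitions of $d-1$ obtained by removing one corner box from $\lambda$ (from rows $k_1<\cdots<k_r$ with $\lambda_{k_i}>\lambda_{k_i+1}$), there is an isomorphism of graded $S_{d-1}$-modules
$$R(\lambda) \;\cong\; \bigoplus_{i=1}^{r} q^{e_i}\, R(\lambda^{(i)}),$$
where each $e_i$ is an explicit grading shift determined by the position of the removed corner.

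The main obstacle is establishing this decomposition. My approach would be to treat $t_d$ as a distinguished variable and combine the Tanisaki generators $e_s(t_J)$ over subsets $J$ containing $d$ with those over subsets $J$ not containing $d$. From this combination one extracts a monic polynomial of degree $r$ in $t_d$ that vanishes modulo $\langle\mathscr C_\lambda\rangle$, with one root corresponding to each corner of $\lambda$. Specializing $t_d$ at the $i$-th root and comparing the surviving Tanisaki generators with the defining relations of $\langle \mathscr C_{\lambda^{(i)}}\rangle$ in the variables $t_1,\ldots,t_{d-1}$ identifies the $i$-th graded summand with $R(\lambda^{(i)})$ up to the claimed shift. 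A dimension count, using the inductive upper bound on $\dim R(\lambda^{(i)})$ and tracking graded pieces, then confirms that the direct sum is both injective and surjective onto $R(\lambda)$.

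Once the decomposition is in hand, I would match it with the analogous recursion for modified Kostka--Foulkes polynomials. Using Frobenius reciprocity and the multiplicity-free branching rule $L(\mu)|_{S_{d-1}}=\bigoplus_{\mu^-\lessdot\mu}L(\mu^-)$, the desired character equality reduces to the combinatorial identity
$$\sum_{\mu\,\vdash d,\ \mu\supset\nu}\widetilde K_{\mu,\lambda^t}(q)\;=\;\sum_{i=1}^{r} q^{e_i}\,\widetilde K_{\nu,(\lambda^{(i)})^t}(q)\qquad (\nu\vdash d-1),$$
which is a Morris-type recursion provable directly by cocharge manipulation of reading words. Since restriction to $S_{d-1}$ alone does not determine the $S_d$-character uniquely, I would close the induction by a secondary argument ordered by the dominance order $\preceq$: the surjections $R(\mu)\twoheadrightarrow R(\lambda)$ for $\lambda\preceq \mu$ bound multiplicities from above, and the top graded component is pinned down using the observation that among all $\mu\vdash d$ only $\mu=\lambda^t$ contributes to degree $n(\lambda^t)$, with coefficient $q^{n(\lambda^t)}$. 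Iterating this top-degree analysis together with the branching identity determines all multiplicities uniquely and simultaneously yields the final assertion $R(\lambda)[n(\lambda^t)]\cong L(\lambda^t)$.
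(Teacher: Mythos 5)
The paper offers no proof of this statement: it is quoted directly from \cite{GP} (hence the citation and the \qed), so what you are really attempting is a reconstruction of Garsia--Procesi's argument. Your overall plan --- restrict to $S_{d-1}$, decompose $R(\la)$, match against a Morris-type recursion for $\widetilde K_{\mu,\,\la^t}(q)$, then recover the full $S_d$-character by a dominance-order/top-degree argument --- is indeed the shape of their proof. But your key structural input is incorrect as stated, and the error propagates through the rest of the plan.

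The claimed decomposition $R(\la)\cong\bigoplus_{i=1}^{r}q^{e_i}R(\la^{(i)})$, with exactly one summand per corner of $\la$ and a single power $q^{e_i}$ for each, fails a dimension count: $\dim R(\la)=d!/\prod_i(\la^t_i)!$, so for $\la=(d)$ the left side is the coinvariant algebra of dimension $d!$, while $\la$ has a single corner and $\dim R((d-1))=(d-1)!$; no choice of $e_1$ can fix this. The actual Garsia--Procesi recursion has $\la_1$ summands, one for each column of $\la$ (equivalently each part of $\la^t$), the summand for a given column being $R$ of the partition obtained by deleting a box at the bottom of that column and re-sorting; thus a given corner shape appears with multiplicity equal to the number of columns of that length, with the shifts $q^0,q^1,\dots,q^{\la_1-1}$ distributed among the summands. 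For example $R((d))$ restricted to $S_{d-1}$ is $(1+q+\cdots+q^{d-1})\,R((d-1))$, not $q^{e_1}R((d-1))$. Correspondingly, your proposed identity $\sum_{\mu\supset\nu}\widetilde K_{\mu,\,\la^t}(q)=\sum_{i=1}^{r}q^{e_i}\widetilde K_{\nu,\,(\la^{(i)})^t}(q)$ is false with one term per corner (same example, $\nu=(d-1)$: the left side is $1+q+\cdots+q^{d-1}$, the right side a monomial). The mechanism you propose for the decomposition is also off: since the Tanisaki generators are homogeneous, $t_d$ is nilpotent in $R(\la)$ (indeed $t_d^{\la_1}\in\langle\mathscr C_\la\rangle$), so there is no monic polynomial in $t_d$ with ``one root per corner''; distinct roots belong to the deformed algebras $R_{\bs a}(\la)$ of Section \ref{deformed TGP}, whereas in the undeformed case it is the filtration by powers of $t_d$ that yields the $\la_1$ filtration steps. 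With the recursion corrected to the column-indexed form, the remainder of your outline (Frobenius reciprocity, the Morris recursion, pinning the top degree $n(\la^t)$, and the dominance-order bootstrap) is viable and is essentially what \cite{GP} carries out.
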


\begin{example}
Let $\lambda = (4)$. Then, $\la^t = (1,1,1,1)$ and from Example \ref{kostka,d=4} we obtain the graded character of the coinvariant algebra
\begin{align*}
\on{gch}\big(\C[t_1,t_2,t_3, t_4]/ \C[t_1,t_2,t_3, t_4]^{S_4}\big)=\on{gch} R(4) \hspace{8cm}\\ =\chi_{(4)}+(q+q^2+q^3)\chi_{(3,1)}+(q^2+q^4)\chi_{(2,2)}+(q^3+q^4+q^5)\chi_{(2,1,1)}+q^6\chi_{(1,1,1,1)}. 
\end{align*}

\end{example}

 Let $\la \vdash d, \ p = \la_1$. We realize the TGP module $R(\la)$ as a Zelevinsky tensor product as follows. Let $M(\la)$ be the $S_d$-module given by the Zelevinsky tensor product 
    $$ M(\la) = L(\la_1^t)\boxtimes L(\la_2^t)\boxtimes \cdots \boxtimes L(\la_p^t).$$

We define the integer $d_\la$ as the multinomial coefficient
\begin{equation*} \label{multinomial}
 d_\la = \binom{d}{\la^t_{1},\,\dots, \la^t_{ p}} = \frac{d!}{(\la_1^t)!\cdots (\la_p^t)!}.   
\end{equation*}
We have $\dim M({\la}) = d_\la$.
   \begin{lem}
   We have $R(\la) \simeq M(\la)$ as $S_d$-modules. In particular,  $\dim R(\la) = d_\la$.
   \end{lem}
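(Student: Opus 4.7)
The plan is to prove the isomorphism by comparing characters in the semisimple category $\mbox{Rep}(S_d)$, using Theorem \ref{GP thm} on one side and Young's rule on the other.

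First, I would compute $\on{ch}\, M(\la)$. Since $L(\la_i^t)$ denotes the irreducible $S_{\la_i^t}$-module associated to the one-row partition $(\la_i^t)\vdash \la_i^t$, each factor $L(\la_i^t)$ is just the trivial representation of $S_{\la_i^t}$. Therefore
\begin{equation*}
M(\la) = L(\la_1^t)\boxtimes L(\la_2^t)\boxtimes \cdots \boxtimes L(\la_p^t) = \on{Ind}_{S_{\la_1^t}\times \cdots \times S_{\la_p^t}}^{S_d}(\mathbf 1)
\end{equation*}
is exactly the Young permutation module $M^{\la^t}$. By Young's rule (a classical consequence of the definition of Kostka numbers via semistandard tableaux),
\begin{equation*}
\on{ch}\, M(\la) = \sum_{\mu\,\vdash d} K_{\mu,\, \la^t}\,\chi_\mu.
\end{equation*}

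Next, I would obtain $\on{ch}\, R(\la)$ by specializing the graded character in Theorem \ref{GP thm} at $q=1$ and invoking property (v) of the modified Kostka-Foulkes polynomials, namely $\widetilde K_{\mu,\la^t}(1) = K_{\mu,\la^t}$. This gives
\begin{equation*}
\on{ch}\, R(\la) = \sum_{\mu\,\vdash d} \widetilde K_{\mu,\,\la^t}(1)\,\chi_\mu = \sum_{\mu\,\vdash d} K_{\mu,\,\la^t}\,\chi_\mu = \on{ch}\, M(\la).
\end{equation*}
Since $\mbox{Rep}(S_d)$ is semisimple, equality of characters yields the desired isomorphism $R(\la)\simeq M(\la)$ of $S_d$-modules.

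For the dimension count, I would simply use the formula for the dimension of an induced module:
\begin{equation*}
\dim R(\la) = \dim M(\la) = \frac{|S_d|}{|S_{\la_1^t}\times \cdots \times S_{\la_p^t}|} = \frac{d!}{(\la_1^t)!\cdots(\la_p^t)!} = d_\la.
\end{equation*}
There is no genuine obstacle here since Theorem \ref{GP thm} does the heavy lifting; the only thing to be careful about is the convention that $L(\la_i^t)$ is the trivial (not the sign) representation, which is correct because $(\la_i^t)$ is viewed as the one-row partition of the integer $\la_i^t$.
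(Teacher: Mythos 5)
Your proof is correct and follows essentially the same route as the paper: identify $M(\la)$ with the permutation module induced from the trivial representation, decompose it by Young's rule with Kostka-number multiplicities, and match this against the $q=1$ specialization of Theorem \ref{GP thm}, using semisimplicity of $\mbox{Rep}(S_d)$ to upgrade equality of characters to an isomorphism. The paper's argument is the same, merely leaving the $q=1$ evaluation and the dimension count of the induced module implicit.
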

   
   \begin{proof}
       The $S_d$-module $M({\la})$ is isomorphic to the Specht module of $\la^t$ constructed as the induced module from the trivial  representation of the subalgebra $\mathbb C S_{\la_1^t}\otimes \cdots \otimes \mathbb C S_{\la_p^t}\hookrightarrow \mathbb C S_d$ by
  $$
  M({\la}) \simeq \on{Ind}^{\mathbb C S_d}_{\mathbb C S_{\la_1^t}\otimes \cdots \otimes \, \mathbb C S_{\la_p^t}}(\mathbb C). 
  $$ 
The $S_d$-module $M({\la})$ has 
the following $S_d$-decomposition, see Section 7.2 of \cite{F},
$$
M({\la}) \simeq L(\la^t)\oplus \big (\bigoplus_{\mu \,\triangleright \la^t} L(\mu)^{\oplus K_{\mu,\la^t}}\big).
$$
Thus, by Theorem \ref{GP thm}, we have $\ch \, M({\la}) = \ch\, R(\la).$
   \end{proof}

\subsection{Deformed Garsia-Procesi modules}
\label{deformed TGP}

Let $\la \vdash d$ and $\bs a = (a_1,\dots, a_{\la_1})$ be a sequence of evaluation parameters of $\la$. Recall that we regard $a_i$ as labels for the columns of the Young diagram of $\la$. We also label by $a_i$ all the boxes in the $i$-th column of the Young diagram of $\la$.

For $0\leq n < \ell(\lambda)$, let $\boldsymbol {a}^{(n)} $ be the $m_\lambda(n)$-tuple of complex numbers obtained by writing the evaluation parameters $a_i$  labeling the boxes in the rows numbered by  $n+1, \dots, \ell(\lambda)$, from left to right and from top to bottom
\begin{equation} \label{param}
    \boldsymbol {a}^{(n)} = (a_1,\dots, a_{\lambda_{n+1}}, a_1,\dots, a_{\lambda_{n+2}},\dots, a_1,\dots,a_{\lambda_{\ell(\lambda)}}).
\end{equation}

 Let $h_k\big(\boldsymbol {a}^{(n)}\big) \,{\in}\, \mathbb C$ be the result of evaluating the $k$-th complete homogeneous symmetric polynomial  at the $m_\la(n)$-tuple $\boldsymbol {a}^{(n)}$. For $0\leq n < \ell(\la)$, let 
$$
{\mathscr C}^{\,\boldsymbol a}_{\lambda,\, n} = \bigg \{\sum_{k=0}^r (-1)^ke_{r-k}(t_J)h_{k}\big( \boldsymbol {a}^{(n)}\big) :  J\in \mathbb J^{(d-n)}, \ \  d-n-m_\lambda(n)<r\leq d-n \bigg\}.
$$
We define the set of deformed Tanisaki  polynomials ${\mathscr C}^{\, \boldsymbol a}_\lambda$ by 
$${ \mathscr C}^{\, \boldsymbol a}_\lambda = \bigcup_{n = 0}^{\ell(\lambda)-1}  { \mathscr C}^{\,\boldsymbol a}_{\lambda,\,n}.$$

Note that, for $\bs 0=(0,\dots,0)$, the set of deformed Tanisaki polynomials is the set of Tanisaki polynomials previously defined, i.e.,  ${\mathscr C}^{\, \boldsymbol 0}_\lambda={\mathscr C}_\lambda$. 

Let $\langle {\mathscr C}^{\, \boldsymbol a}_\lambda\rangle \subseteq \mathbb C[t_1,\dots,t_d]$ be the ideal generated by ${ \mathscr C}^{\,\boldsymbol a}_\lambda$.  We call the quotient 
$$R_{\bs a}(\lambda) = \mathbb C[t_1,\dots,t_d]/ \langle {\mathscr C}^{\, \boldsymbol a}_\lambda \rangle  $$
the deformed Tanisaki-Garsia-Procesi (deformed TGP) algebra $R_{\bs a}(\lambda)$. In the case of a single evaluation parameter $\bs a = (a,\dots ,a)$, we simply denote by $R_a(\la)$ the deformed TGP algebra $R_{\bs a}(\la)$.

\medskip

It is convenient to rewrite the relations in $R_{\bs a}(\la)$ in the following way.

\begin{lem} \label{relations lemma}
Let $0\leq n < \ell(\la).$ The set $\mathscr C_{\la,n}^{\,\bs a}$ is the union over  $J\in \mathbb J^{(d-n)}$ of the sets of 
coefficients of $x^{-1},\dots, x^{-m_\la(n)}$ in the Laurent series $H_{\la,n}^{\bs a}(t_J)$  in a formal variable $x^{-1}$ given by
\begin{equation}\label{series}
H_{\la,n}^{\bs a}(x;t_J)=\frac{\prod_{j\in J} (x+t_j)}{ \prod_{i=1}^{m_\la(n)}(x+\bs a^{(n)}_i)}.
\end{equation}
Moreover,  the Laurent series $H_{\la,n}^{\bs a}(x; t_J)$  truncates in $R_{\bs a}(\la)$ to a polynomial of degree $d-n-m_\la(n)$.
\end{lem}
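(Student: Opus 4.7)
My plan is to prove the two assertions separately by direct series manipulation.

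For the first assertion, the idea is to expand $H_{\la,n}^{\bs a}(x;t_J)$ as a Laurent series in $x^{-1}$ and read off the coefficients. Write
\[
\prod_{j\in J}(x+t_j)=\sum_{r=0}^{d-n}e_r(t_J)x^{d-n-r},\qquad
\frac{1}{\prod_{i=1}^{m_\la(n)}(x+\bs a^{(n)}_i)}=x^{-m_\la(n)}\sum_{k\ge 0}(-1)^k h_k(\bs a^{(n)})x^{-k},
\]
using the standard generating-function identity $\prod_i(1+\bs a^{(n)}_i y)^{-1}=\sum_{k\ge 0}(-1)^k h_k(\bs a^{(n)})y^k$ with $y=1/x$. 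Multiplying, the coefficient of $x^{-\ell}$ (for $\ell\ge 1$) becomes $\sum_{r+k=d-n-m_\la(n)+\ell}(-1)^k e_r(t_J)h_k(\bs a^{(n)})$, which under the substitution $r'=r-k$ is visibly the generator in $\mathscr C_{\la,n}^{\bs a}$ with $r=d-n-m_\la(n)+\ell$. As $\ell$ runs over $1,\dots,m_\la(n)$, the index $r$ runs precisely over $d-n-m_\la(n)<r\le d-n$, matching the indexing in the definition of $\mathscr C_{\la,n}^{\bs a}$.

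For the second assertion, the key is to exploit the polynomial identity
\[
H_{\la,n}^{\bs a}(x;t_J)\cdot\prod_{i=1}^{m_\la(n)}(x+\bs a^{(n)}_i)=\prod_{j\in J}(x+t_j).
\]
Writing $H_{\la,n}^{\bs a}(x;t_J)=\sum_{s\le d-n-m_\la(n)} c_s(t_J)x^s$ and setting $\alpha_p=e_p(\bs a^{(n)})$, comparing coefficients of $x^s$ for $s<0$ on the right-hand side (which is a genuine polynomial) gives the recurrence
\[
\sum_{p=0}^{m_\la(n)}\alpha_p\,c_{s-m_\la(n)+p}(t_J)=0,\qquad s\le -1.
\]
Since $\alpha_0=1$, this lets one express $c_{s-m_\la(n)}(t_J)$ as a $\C$-linear combination of $c_{s-m_\la(n)+1}(t_J),\dots,c_{s-1}(t_J)$. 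Iterating downward from $s=-1$, every $c_{-\ell}(t_J)$ with $\ell>m_\la(n)$ is a $\C$-linear combination of $c_{-1}(t_J),\dots,c_{-m_\la(n)}(t_J)$, which by the first part of the lemma lie in $\langle \mathscr C_\la^{\,\bs a}\rangle$. Thus every negative-degree coefficient of $H_{\la,n}^{\bs a}(x;t_J)$ vanishes in $R_{\bs a}(\la)$, while the non-negative part is manifestly a polynomial of degree $d-n-m_\la(n)$.

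There is no serious obstacle here; the only care needed is bookkeeping of indices in the re-summation and verifying the recurrence produces exactly the elements of $\mathscr C_{\la,n}^{\bs a}$ and nothing outside the ideal. I expect the write-up to be a short, essentially computational argument.
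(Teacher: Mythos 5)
Your proof is correct and follows essentially the same route as the paper: the first assertion is the same Laurent-series expansion identifying the coefficients of $x^{-1},\dots,x^{-m_\la(n)}$ with the generators $\sum_{k}(-1)^k e_{r-k}(t_J)h_k(\bs a^{(n)})$, and for the truncation both arguments clear the denominator and exploit that $\prod_{j\in J}(x+t_j)$ is a polynomial. The only difference is cosmetic: the paper compares the polynomial parts after multiplying by $\prod_i(x+\bs a^{(n)}_i)$ to conclude the tail vanishes outright in $R_{\bs a}(\la)$, whereas you extract the equivalent coefficient recurrence and induct, reaching the same conclusion.
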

\begin{proof}
Notice that $$H_{\la,n}^{\bs a}(x;t_J) = \sum_{r=0}^\infty \bigg( \sum_{k=0}^r(-1)^ke_{r-k}(t_J)h_k(\bs a^{(n)})\bigg)x^{d-n-m_\la(n)-r}.$$

We only need to show that $H_{\la,n}^{\bs a}(x;t_J)$ is a polynomial of degree $d-n-m_\la(n)$ in $R_{\bs a}(\la)$. Since the coefficients of $x^{-1},\dots, x^{-m_\la(n)}$  in $H_{\la,n}^{\bs a}(x;t_J)$ vanish in $R_{\bs a}(\la)$, then
\be
H_{\la,n}^{\bs a}(x; t_J) = f(x)+x^{-(m_\la(n)+1)}g(x),
\ee
where $f(x)$ is a polynomial of degree $d-n-m_\la(n)$ and $g(x)$ is a power series in $x^{-1}$.
Multiplying by the denominator of $H_{\la, n}^{\bs a}(x;t_J)$, we obtain that the product
\be
\big(f(x)+x^{-(m_\la(n)+1)}g(x)\big)\cdot\prod_{i=1}^{m_\la(n)}(x+\bs a^{(n)}_i)
\ee 
is a polynomial of degree $d-n$. As $f(x)\cdot \prod_{i=1}^{m_\la(n)}(x+\bs a^{(n)}_i)$ is also a polynomial of degree $d-n$, it follows that $g(x)=0$. 
\end{proof}
      
 \medskip

One can reduce the number of generators in the ideal $\langle  {\mathscr C}^{\, \boldsymbol a}_\lambda \rangle$ as follows.

\begin{lem}\label{reduce lem}
The ideal $\langle  {\mathscr C}^{\, \boldsymbol a}_\lambda \rangle $ is generated by the coefficients of $x^{-1}, \dots, x^{-\la_{n+1}}$ in $H_{\la,n}^{\bs a}(x;t_J)$ with $0\leq n<\ell(\la)$, and $J\in \mathbb J^{(d-n)}$.
\end{lem}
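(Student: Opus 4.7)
The inclusion $I \subseteq \langle \mathscr C_\lambda^{\bs a}\rangle$ of the ``reduced'' ideal is automatic, so I only need the reverse. Equivalently, writing $A = \mathbb C[t_1,\dots,t_d]/I$, I must show that for every $n$ and every $J \in \mathbb J^{(d-n)}$ the coefficient of $x^{-r}$ in $H_{\lambda,n}^{\bs a}(x;t_J)$ vanishes in $A$ for all $\lambda_{n+1} < r \leq m_\lambda(n)$. The strategy is a downward induction on $n$, driven by one rational-function identity relating levels $n$ and $n+1$.

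The identity comes from the decomposition $\bs a^{(n)} = (a_1,\dots,a_{\lambda_{n+1}}) \sqcup \bs a^{(n+1)}$ in \eqref{param}: for $J \in \mathbb J^{(d-n)}$ and any $j_0 \in J$, setting $J' = J\setminus\{j_0\} \in \mathbb J^{(d-n-1)}$,
$$
H_{\lambda,n}^{\bs a}(x;t_J) \cdot \prod_{i=1}^{\lambda_{n+1}}(x+a_i) \;=\; (x+t_{j_0}) \cdot H_{\lambda,n+1}^{\bs a}(x;t_{J'}),
$$
as rational functions, hence also in $\mathbb C[t_1,\dots,t_d][[x^{-1}]][x]$ after the usual expansion. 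The base case $n = \ell(\lambda)-1$ is trivial, since $m_\lambda(n) = \lambda_{\ell(\lambda)} = \lambda_{n+1}$, so there is nothing beyond the reduced relations to check.

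For the inductive step, assume that for every $J' \in \mathbb J^{(d-n-1)}$ the image of $H_{\lambda,n+1}^{\bs a}(x;t_{J'})$ in $A[[x^{-1}]][x]$ is already a polynomial (no negative powers survive). Plugging into the identity, its right-hand side lies in $A[x]$, hence so does its left-hand side. Write
$$
H_{\lambda,n}^{\bs a}(x;t_J) \;=\; p_J(x) + R_J(x^{-1}), \qquad R_J(x^{-1}) = \sum_{r\geq 1} c_{-r}^{(n,J)}\, x^{-r},
$$
so that $p_J \in A[x]$. The reduced relations that belong to $I$ give $c_{-r}^{(n,J)} = 0$ in $A$ for $1 \leq r \leq \lambda_{n+1}$, so in $A[[x^{-1}]]$ the tail $R_J$ starts at $x^{-\lambda_{n+1}-1}$. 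Consequently $R_J(x^{-1}) \cdot \prod_{i=1}^{\lambda_{n+1}}(x+a_i)$ is simultaneously (i) a polynomial in $A[x]$ (it equals LHS $-\, p_J(x)\prod_i(x+a_i)$) and (ii) a formal series in $x^{-1}$ with only strictly negative powers of $x$, hence it vanishes in $A[[x^{-1}]][x]$.

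The final step is to conclude $R_J = 0$ in $A[[x^{-1}]]$. Since $\prod_{i=1}^{\lambda_{n+1}}(x+a_i)$ is monic of degree $\lambda_{n+1}$ with scalar coefficients, reading off the coefficients of $x^{-1}, x^{-2}, \dots$ of the vanishing product successively forces $c_{-\lambda_{n+1}-1}^{(n,J)} = 0$, then $c_{-\lambda_{n+1}-2}^{(n,J)} = 0$, and so on through $c_{-m_\lambda(n)}^{(n,J)} = 0$ in $A$. This closes the induction. The main subtlety is calibrating the threshold $\lambda_{n+1}$: it must precisely equal the degree of the factor $\prod_{i=1}^{\lambda_{n+1}}(x+a_i)$ in the identity, so that multiplying $R_J$ by this polynomial produces only strictly negative powers of $x$; this is why the lemma stops cutting generators at exactly $x^{-\lambda_{n+1}}$ and not earlier.
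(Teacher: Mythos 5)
Your proof is correct and follows essentially the same route as the paper: a downward induction on $n$ in which one multiplies $H_{\la,n}^{\bs a}(x;t_J)$ by the monic degree-$\la_{n+1}$ factor $\prod_{i=1}^{\la_{n+1}}(x+a_i)$, uses the reduced relations to see the tail starts at $x^{-\la_{n+1}-1}$, and concludes the tail vanishes; your explicit identity with the extra factor $(x+t_{j_0})$ simply makes precise the paper's implicit use of the level-$(n+1)$ statement despite the cardinality mismatch of $J$. The only small adjustment: since your inductive hypothesis is full polynomiality of $H_{\la,n+1}^{\bs a}(x;t_{J'})$ in the quotient, the base case should be $n=\ell(\la)$ (empty denominator, claim immediate), as in the paper, rather than the weaker observation you make at $n=\ell(\la)-1$ --- equivalently, run your inductive step once more at $n=\ell(\la)-1$ with $\bs a^{(\ell(\la))}$ empty.
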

\begin{proof}
We prove that given the relations in the lemma, the functions $H_{\la,n}^{\bs a}(x;t_J)$ are polynomials by inverse induction on $n$. For $n=\ell(\la)$ the statement is trivial. Note that $\bs a^{(n+1)}$ is a subsequence of  $\bs a^{(n)}$. Using the statement for $n+1$ and the given relations we obtain 
\be
H_{\la,n}^{\bs a}(x;t_J)=\frac{\prod_{j\in J} (x+t_j)}{ \prod_{i=1}^{m_\la(n)}(x+\bs a^{(n)}_i)}=\frac{f(x)}{\prod_{i=1}^{\la_{n+1}}(x+\bs a^{(n)}_i)}=g(x)+x^{-\la_{n+1}-1}h(x),
\ee 
where $f(x), g(x)$ are polynomials and $h(x)$ is a series in $x^{-1}$. Multiplying the last equality by the ${\prod_{i=1}^{\la_{n+1}}(x+\bs a^{(n)}_i)}$, we see that $h(x)=0$.
\end{proof}
However, the number of relations  in general is still much larger than $d$.  

\begin{example}
   Let $d=3$ and $\la=(2,1)$ and $a_1=a_2=a$, cf. Example \ref{example} . Then, we have $6$ deformed Tanisaki polynomials in $\mathscr C_\la^{\,a}$ ($3$ for $n=0$  and $3$ for $n=1$). According to Lemma \ref{reduce lem}, one can reduce the number of generators of $\langle \mathscr C_\la^{\,a} \rangle$ to $5$ polynomials ($2$ for $n=0$ and $3$ for $n=1$). In fact, one can reduce it further to $4$ polynomials: $t_1+t_2+t_3-3a, \   (t_1-a)(t_2-a), \ (t_2-a)(t_3-a),\ (t_1-a) (t_3-a)$.  It is easy to see that it cannot be reduced to $3$ generators.
\end{example}

\medskip

We will show below that our deformation is flat, i.e., 
$\dim R_{\bs a}(\la) = \dim R(\la)$ for all $\bs a$. We start by showing $R_{\bs a}(\la)$ is finite-dimensional and establishing a few simple properties.

\begin{lem}  \label{symmev}
    Let $\la\vdash d.$ In $R_{\bs a}(\la),$ the relation $s(t_1,\dots, t_d)  = s(\bs a^{(0)})$ holds for any symmetric polynomial $s(t_1,\dots,t_d)\in \mathbb C[t_1,\dots,t_d]^{S_d}.$ In particular, $\dim(R_{\bs a}(\lambda))\leq d!$.
\end{lem}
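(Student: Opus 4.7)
My plan is to apply Lemma \ref{relations lemma} with the special choice $n=0$ and $J=\{1,\dots,d\}$, which is the unique element of $\mathbb{J}^{(d)}$. For this choice we have $m_\la(0)=d$, so the series
\[
H_{\la,0}^{\bs a}(x;t_J)=\frac{\prod_{j=1}^d(x+t_j)}{\prod_{i=1}^d(x+\bs a^{(0)}_i)}
\]
truncates in $R_{\bs a}(\la)$ to a polynomial of degree $d-0-m_\la(0)=0$. Since the constant term of the expansion in $x^{-1}$ is visibly $1$, this forces the identity
\[
\prod_{j=1}^d(x+t_j)=\prod_{i=1}^d(x+\bs a^{(0)}_i)
\]
in $R_{\bs a}(\la)[x]$.

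Next I would compare coefficients of $x^{d-k}$ on both sides to get $e_k(t_1,\dots,t_d)=e_k(\bs a^{(0)})$ in $R_{\bs a}(\la)$ for every $1\le k\le d$. Since the elementary symmetric polynomials $e_1,\dots,e_d$ generate $\C[t_1,\dots,t_d]^{S_d}$ as a $\C$-algebra, any symmetric polynomial $s$ is a polynomial in the $e_k$'s, and the relation $s(t_1,\dots,t_d)=s(\bs a^{(0)})$ follows immediately.

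For the dimension bound, the previous paragraph shows that $R_{\bs a}(\la)$ is a quotient of
\[
A_{\bs a}=\C[t_1,\dots,t_d]\big/\big\langle e_k(t_1,\dots,t_d)-e_k(\bs a^{(0)}): 1\le k\le d\big\rangle,
\]
so it suffices to check $\dim A_{\bs a}=d!$. This is the standard fact that $\C[t_1,\dots,t_d]$ is free of rank $d!$ over the subring of symmetric polynomials (Chevalley–Shephard–Todd), and we are quotienting by the extension of the maximal ideal of $\C[t_1,\dots,t_d]^{S_d}$ corresponding to the point $\bs a^{(0)}$; equivalently, one can deduce it by the continuous deformation Lemma \ref{deformed ideal lemma}, since the ideal depends continuously on $\bs a$ and at $\bs a=\bs 0$ one recovers the classical coinvariant algebra $R(d)$ of dimension $d!$. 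Neither step presents any real obstacle; the only point requiring care is the clean extraction of the symmetric relations from Lemma \ref{relations lemma}, i.e.\ observing that the $n=0$ case gives exactly the full set of $S_d$-invariant relations needed.
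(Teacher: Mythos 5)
Your proof is correct and takes essentially the same route as the paper: apply Lemma \ref{relations lemma} with $n=0$ and $J=\{1,\dots,d\}$ to get $\prod_{j}(x+t_j)=\prod_{i}(x+\bs a^{(0)}_i)$ in $R_{\bs a}(\la)[x]$, compare coefficients to obtain $e_k(t_1,\dots,t_d)=e_k(\bs a^{(0)})$, and conclude since the $e_k$ generate $\C[t_1,\dots,t_d]^{S_d}$, with the $d!$ bound coming from the standard freeness of $\C[t_1,\dots,t_d]$ of rank $d!$ over the symmetric polynomials (which the paper leaves implicit). Only your parenthetical alternative via Lemma \ref{deformed ideal lemma} is loose as phrased, since that lemma requires the dimension to be constant along the deformation (which needs the scaling isomorphism of Lemma \ref{swapshift} to justify), but your main argument does not depend on it.
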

\begin{proof} By Lemma \ref{relations lemma} applied to $J=\{t_1,\dots,t_d\}$, in $R_{\bs a}(\la)$ we have
\be
\prod_{i=1}^d (x+t_i) =  \prod_{i=1}^{d}(x+\bs a^{(0)}_i).
\ee
Therefore, we have $e_i(t_1,\dots,t_d)=e_i(\bs a^{(0)})$ in $R_{\bs a}(\lambda)$. The lemma now follows, since $e_i(t_1,\dots,t_d)$ generate the algebra of symmetric polynomials.
\end{proof}

The next corollary will be useful later.

\begin{cor} 
    Let $\la \vdash d$. In $R_{\bs a}(\la)$, we have \begin{equation*}
        \prod_{j=1}^d(t_i-\bs a_j^{(0)}) = 0,\qquad 1\leq i \leq d. \label{t-a} 
    \end{equation*}
In particular, if $\bs a_j^{(0)}\neq 0$ for all $j$,  the operator $t_i^{-1}$ in $R_{\bs a}(\la)$ can be expressed as a polynomial in the operator $t_i$.
 \end{cor}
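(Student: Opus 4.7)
The plan is to extract the relation directly from Lemma \ref{symmev} by a polynomial substitution. Recall that Lemma \ref{symmev} says that every symmetric polynomial in $t_1,\dots,t_d$ equals its value at $\bs a^{(0)}$; equivalently, each elementary symmetric polynomial $e_k(t_1,\dots,t_d)$ equals $e_k(\bs a^{(0)})$ in $R_{\bs a}(\la)$. Packaging these equalities into a generating function, we obtain the identity
\begin{equation*}
\prod_{j=1}^d (x+t_j) \;=\; \prod_{j=1}^d (x+\bs a_j^{(0)})
\end{equation*}
in the polynomial ring $R_{\bs a}(\la)[x]$.

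The first step is then to specialize the formal variable $x$ to the element $-t_i \in R_{\bs a}(\la)$. This substitution is legitimate because the ring is commutative. On the left-hand side the $j=i$ factor contributes $(-t_i+t_i)=0$, so the whole product vanishes; on the right-hand side we get $\prod_{j=1}^d (-t_i+\bs a_j^{(0)}) = (-1)^d \prod_{j=1}^d (t_i-\bs a_j^{(0)})$. Equating and dividing by the unit $(-1)^d$ yields the desired identity.

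For the second assertion, the plan is to expand the relation $\prod_{j=1}^d(t_i-\bs a_j^{(0)})=0$ as
\begin{equation*}
t_i^d - e_1(\bs a^{(0)})\, t_i^{d-1} + \cdots + (-1)^d e_d(\bs a^{(0)}) \;=\; 0,
\end{equation*}
and observe that when every $\bs a_j^{(0)}\neq 0$ the constant term $(-1)^d e_d(\bs a^{(0)}) = (-1)^d\prod_j \bs a_j^{(0)}$ is a nonzero scalar. Isolating it on one side exhibits $1$ as $t_i$ times an explicit polynomial in $t_i$, so $t_i$ is invertible in $R_{\bs a}(\la)$ and its inverse is this polynomial.

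There is no real obstacle here: the whole argument is a one-line substitution into the identity of Lemma \ref{symmev} followed by a trivial rearrangement. The only point worth being careful about is noting that substituting a ring element into a formal polynomial identity is valid in the commutative setting, and that the invertibility conclusion requires the nonvanishing of the constant term, which is where the hypothesis on $\bs a_j^{(0)}$ enters.
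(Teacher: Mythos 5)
Your argument is correct and is essentially the paper's own proof: the paper likewise invokes Lemma \ref{symmev} to get $\prod_{j=1}^d(x-t_j)=\prod_{j=1}^d(x-\bs a_j^{(0)})$ in $R_{\bs a}(\la)$ and substitutes $x=t_i$, which is the same substitution you perform up to a sign convention. Your explicit expansion justifying the invertibility of $t_i$ when all $\bs a_j^{(0)}\neq 0$ is the standard rearrangement the paper leaves implicit, and it is fine.
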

\begin{proof}
By Lemma \ref{symmev}, we have 
\be
\prod_{j=1}^d(x-t_j)=\prod_{j=1}^d(x-\bs a_j^{(0)}).
\ee
By substituting $x=t_i,$ the result follows.
\end{proof}

The next simple lemma asserts that the algebra $R_{\bs a}(\la)$ is invariant under shifting and scaling of evaluation parameters, as well as swaps of parameters corresponding to columns of the same length.
\begin{lem} Let $\la\vdash d$. 
\begin{enumerate}  \label{swapshift}
    \item Let $\bs a=(a_1,\dots,a_{\la_1}),   \widetilde{\bs a}=(ba_1+c,\dots, ba_{\la_1}+c)$, where $b,c\in\C$ with $b\neq 0$. Then, there is an isomorphism of algebras $R_{\widetilde{\bs a}}(\la)\to R_{\bs a}(\la)$ which maps 
    $t_k\mapsto bt_k+c$. 
\item Let $\la^t_i=\la^t_j$ for some $i,j$ and let $\bs a=(a_1,\dots, a_{\la_1}), \widetilde{\bs a}=(a_1,\dots, a_j,\dots,a_i,\dots, a_{\la_1})$. Then, there is an isomorphism of algebras $R_{\widetilde{\bs a}}(\la) \to R_{\bs a}(\la)$ which maps $t_k\mapsto t_k$. 
\end{enumerate}

\end{lem}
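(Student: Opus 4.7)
For part (i), the plan is to lift the putative isomorphism to an automorphism of the ambient polynomial ring $\C[t_1,\dots,t_d]$ and verify it intertwines the two deformed Tanisaki ideals. I would define $\phi:\C[t_1,\dots,t_d]\to \C[t_1,\dots,t_d]$ by $\phi(t_k)=bt_k+c$, which is a ring automorphism because $b\neq 0$, and aim to show $\phi(\langle \mathscr C_\la^{\widetilde{\bs a}}\rangle)=\langle \mathscr C_\la^{\bs a}\rangle$. The decisive computation is, after noting that $\widetilde{\bs a}^{(n)}_i=b\bs a^{(n)}_i+c$ and substituting $x=by-c$,
\begin{equation*}
\phi\bigl(H_{\la,n}^{\widetilde{\bs a}}(x;t_J)\bigr)\bigg|_{x=by-c}
=\frac{\prod_{j\in J}b(y+t_j)}{\prod_{i=1}^{m_\la(n)}b(y+\bs a^{(n)}_i)}
= b^{\,d-n-m_\la(n)}\,H_{\la,n}^{\bs a}(y;t_J).
\end{equation*}
By Lemma \ref{relations lemma}, the right-hand side becomes a polynomial in $y$ of degree $d-n-m_\la(n)$ after passing to $R_{\bs a}(\la)$. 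Since $y=(x+c)/b$ is an invertible linear function of $x$, a polynomial in $y$ is automatically a polynomial in $x$ of the same degree; hence $\phi(H_{\la,n}^{\widetilde{\bs a}}(x;t_J))$ modulo $\langle \mathscr C_\la^{\bs a}\rangle$ truncates to a polynomial in $x$. The coefficients of the negative powers of $x$ in $\phi(H_{\la,n}^{\widetilde{\bs a}}(x;t_J))$, which generate $\phi(\langle \mathscr C_\la^{\widetilde{\bs a}}\rangle)$, therefore lie in $\langle \mathscr C_\la^{\bs a}\rangle$. The reverse containment is obtained symmetrically by running the argument for $\phi^{-1}$ (the substitution with parameters $b^{-1}$ and $-cb^{-1}$), and so $\phi$ descends to the desired algebra isomorphism.

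For part (ii), the strategy is to observe that swapping $a_i\leftrightarrow a_j$ with $\la^t_i=\la^t_j$ leaves the defining ideal invariant on the nose, so that the identity map of $\C[t_1,\dots,t_d]$ already suffices. Recall from \eqref{param} that $\bs a^{(n)}$ is assembled by listing, for each row $r$ with $n<r\le \ell(\la)$, the parameters $a_1,\dots,a_{\la_r}$ labeling the boxes of that row from left to right. In particular, each $a_k$ appears in $\bs a^{(n)}$ exactly $\max(0,\la^t_k-n)$ times, once for every row $r$ with $n<r\le \la^t_k$. When $\la^t_i=\la^t_j$, the parameters $a_i$ and $a_j$ contribute the same multiplicities to $\bs a^{(n)}$, so interchanging them yields the same multiset $\widetilde{\bs a}^{(n)}=\bs a^{(n)}$ for every $n$. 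Consequently the denominator $\prod_{i=1}^{m_\la(n)}(x+\bs a^{(n)}_i)$ in \eqref{series} is unchanged, and this forces the equality $\mathscr C_\la^{\widetilde{\bs a}}=\mathscr C_\la^{\bs a}$ of the two sets of relations, so the identity map descends to the required isomorphism.

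I do not anticipate a genuine obstacle in either part. Part (ii) is a direct consequence of how $\bs a^{(n)}$ is built from the labeled Young diagram, while in part (i) the only subtle point is to notice that polynomiality of a rational function is invariant under the invertible affine substitution $x=by-c$, which becomes transparent once the computation above is carried out.
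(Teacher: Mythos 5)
Your proposal is correct and follows essentially the same route as the paper: part (i) rests on exactly the substitution $u=(x+c)/b$ giving $b^{\,d-n-m_\la(n)}H_{\la,n}^{\bs a}(u;t_J)$, which is a polynomial in $R_{\bs a}(\la)$ by Lemma \ref{relations lemma}, and part (ii) on the observation that $\widetilde{\bs a}^{(n)}$ and $\bs a^{(n)}$ agree (as multisets) for every $n$ when $\la^t_i=\la^t_j$. Your only additions — making the inverse containment explicit via $\phi^{-1}$ and spelling out the multiplicity $\max(0,\la^t_k-n)$ — are just more detailed versions of what the paper leaves implicit.
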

\begin{proof}
For (i), we have the assignment
\be
H_{\la,n}^{\widetilde{\bs a}}(x;t_J)  =\frac{\prod_{j\in J}(x+t_j)}{\prod_{i=1}^{m_\la(n)}(x+\widetilde{\bs a}^{(n)}_i)} \mapsto 
 \frac{\prod_{j\in J} (x+bt_j +c)}{\prod_{i=1}^{m_\la(n)}(x+b\bs a^{(n)}_i+c)}=b^{d-n-m_\la(n)} \frac{\prod_{j\in J} (u+t_j)}{\prod_{i=1}^{m_\la(n)}(u+\bs a^{(n)}_i)},
\ee
where $u=(x+c)/b$. The last expression is a polynomial in
 $R_{\bs a}(\la)$, therefore the map is well-defined.

For (ii), note that the set of coordinates of $\bs a^{(n)}$ and $\widetilde{\bs a}^{(n)}$  is the same for all $0\leq n<\ell(\la)$. 
\end{proof}

\medskip

We define a structure of $\widetilde S_d$-module on $R_{\bs a}(\la)$.

\begin{lem}
Let $a_i\neq 0$ for all $i$. There is an right action of  $\widetilde S_d$ on $R_{\bs a}(\lambda)$ such that $\sigma_i\in \widetilde S_d$ act by permuting the variables $t_1,\dots,t_d$ and $t_i\in \widetilde S_d$ act as  the multiplication operator. 
\end{lem}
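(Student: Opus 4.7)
The plan is to reduce the lemma to two observations: (a) the ideal $\langle \mathscr{C}_\lambda^{\,\bs a}\rangle$ is stable under permutations of the variables, so $S_d$ acts on the right of $R_{\bs a}(\la)$ by permuting $t_1,\dots,t_d$; (b) each multiplication operator $t_i$ is invertible on $R_{\bs a}(\la)$, giving a well-defined action of $t_i^{-1}$. Once (a) and (b) are in hand, I would verify the defining relations of $\widetilde S_d$ from the list in Section~2 one by one, each of which becomes a short direct computation.

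For (a), I would use the description of $\mathscr{C}_{\la,n}^{\,\bs a}$ from Lemma~\ref{relations lemma}: the generators are the coefficients of $x^{-1},\dots,x^{-m_\la(n)}$ in
\[
H_{\la,n}^{\bs a}(x;t_J)=\frac{\prod_{j\in J}(x+t_j)}{\prod_{i=1}^{m_\la(n)}(x+\bs a^{(n)}_i)}, \qquad J\in \mathbb J^{(d-n)}.
\]
Given $\sigma\in S_d$, applying $\sigma$ to the variables sends $H_{\la,n}^{\bs a}(x;t_J)$ to $H_{\la,n}^{\bs a}(x;t_{\sigma(J)})$, and since $\mathbb J^{(d-n)}$ is closed under the natural $S_d$-action, the set of generators of $\langle \mathscr{C}_\lambda^{\,\bs a}\rangle$ is merely permuted. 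Hence the ideal is $S_d$-invariant and $S_d$ acts on $R_{\bs a}(\la)$ by the rule $f\cdot \sigma := f^\sigma$ (i.e.\ the variable permutation).

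For (b), I would invoke the preceding corollary: in $R_{\bs a}(\la)$ one has $\prod_{j=1}^d (t_i-\bs a^{(0)}_j)=0$ for every $1\le i\le d$. Since all $a_i\neq 0$, every entry of $\bs a^{(0)}$ is nonzero, so the constant term $\prod_j(-\bs a^{(0)}_j)$ is nonzero. Expanding the vanishing polynomial and solving for the constant yields $t_i^{-1}$ as an explicit polynomial in $t_i$, so $t_i$ acts invertibly on $R_{\bs a}(\la)$. Multiplication operators by commuting variables automatically give an action of the algebra $\mathbb C[t_1^{\pm 1},\dots,t_d^{\pm 1}]=\mathbb C\mathbb Z^d$.

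Finally, for the mixed relations I would check each one directly on $f\in R_{\bs a}(\la)$ using the convention $f\cdot\sigma_i=f^{\sigma_i}$, $f\cdot t_j=t_j f$. For $j\notin\{i,i+1\}$,
\[
f\cdot(t_j\sigma_i)=(t_jf)^{\sigma_i}=t_j^{\sigma_i}\,f^{\sigma_i}=t_j\,f^{\sigma_i}=f\cdot(\sigma_i t_j),
\]
and similarly $f\cdot(\sigma_i t_i)=t_i f^{\sigma_i}=t_{i+1}^{\sigma_i}\,f^{\sigma_i}=(t_{i+1}f)^{\sigma_i}=f\cdot(t_{i+1}\sigma_i)$; the braid and $\sigma_i^2=1$ relations are the usual ones for the permutation action, and $t_it_j=t_jt_i$ is just commutativity in $R_{\bs a}(\la)$. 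The only real subtlety is (b): without $a_i\ne 0$ there is no way to invert $t_i$, which is exactly why the hypothesis is needed and is the main technical point of the proof.
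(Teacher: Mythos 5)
Your proposal is correct and follows the same route as the paper: the paper's (very terse) proof likewise notes that $\langle \mathscr{C}_\lambda^{\,\bs a}\rangle$ is a symmetric ideal, that the remaining defining relations of $\widetilde S_d$ are trivially satisfied, and that the operators $t_i$ are invertible by Corollary \ref{t-a} precisely because all $a_i\neq 0$. You have simply spelled out the details (permutation of the generators $H_{\la,n}^{\bs a}(x;t_J)$ and the explicit check of the mixed relations) that the paper leaves implicit.
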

\begin{proof}

The ideal $\langle {\mathscr C}^{\, \boldsymbol a}_\lambda\rangle$ is symmetric, so the deformed TGP algebra is naturally a right $S_d$-module. The remaining relations for the generators of $\widetilde S_d$ are trivially satisfied. If $a_i\neq 0$ for all $i$, then the operators $t_i$ are invertible in $R_{\bs{a}}(\lambda)$ by Corollary \ref{t-a}.
\end{proof}

We call the right $\widetilde S_d$-module  $R_{\bs a}(\la)$ the deformed TGP module $R_{\bs a}(\la)$. In particular, in such a case we implicitly assume that $a_i\neq 0$ for all $i$. We regard the $1$-dimensional $S_d$-module $L(1^{\{d \}})$ as an $\widetilde S_d$-module by letting $t_i$ act by one. In this way, there is a natural structure of right $\widetilde S_d$-module defined in the tensor product 
$$
\widetilde R_{ \bs a}(\la) = R_{ \bs a}(\la)\otimes L(1^{\{d \}}).
$$
We call the right $\widetilde S_d$-module $\widetilde R_{\bs a}(\la)$ the amended deformed TGP module $\widetilde{R}_{\bs a}(\la)$. 

We note that $\widetilde R_{ \bs a}(\la)=R_{ \bs a}(\la)$ as $\C[t_1,\dots,t_d]$-modules.

\section{Deformed TGP modules and Weyl modules}  \label{main sec}

\subsection{The dimension of  \texorpdfstring{$R_{\boldsymbol a}(\lambda)$}{} }

Let $\la \vdash d$ and $p = \la_1$. Given a sequence of evaluation parameters $\bs a = (a_1,\dots, a_p)$, let $M_{\bs a}(\la)$ be the $\widetilde S_d$-module defined by the affine Zelevisnky tensor product
    $$ M_{\bs a}({\la}) = L_{a_1}(\la_1^t)\widetilde\boxtimes L_{a_2}(\la_2^t)\widetilde\boxtimes \cdots \widetilde\boxtimes L_{a_p}(\la_p^t).$$
    Note that the $S_d$-structure of $M_{\bs a}(\la)$ is independent of the choice of $\bs a$, i.e., we have $M_{\bs a}(\la) \simeq M(\la)$ as $S_d$-modules for all $\bs a$.

We will show that the dimension of $R_{\bs a}(\la)$ does not depend on $\bs a$ either. We prove this claim by showing the following chain of inequalities
\begin{align}\label{ineq  chain}
d_\la=\dim R(\la) \geq \dim R_{\epsilon \bs a}(\la) =\dim R_{\bs a}(\la)\geq \dim R_{\bs a+\epsilon \bs s}(\la)\geq \dim M(\la) = d_{\la}, 
\end{align}
where $\bs s=(0,1,2,\dots,p-1)$ and $\epsilon>0$ is sufficiently small.  

\medskip 
 The following lemma is used in proving the rightmost inequality.

\begin{lem}\label{annihilation lem}
For any $f(t_1,\dots,t_d)\in \langle \mathscr C_\la^{\,\bs a}\rangle $, we have $f\cdot M_{\bs  a}({\la}) = 0$.
\end{lem}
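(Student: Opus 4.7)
The strategy is to reduce to checking that the generators of $\langle \mathscr C_\la^{\,\bs a}\rangle$ described in Lemma \ref{relations lemma} annihilate $M_{\bs a}(\la)$, and to do so by diagonalizing the operators $t_1,\dots,t_d$ on a suitable basis of $M_{\bs a}(\la)$.

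First, I would set $p=\la_1$ and work out the action of the $t_j$'s on the induced module $M_{\bs a}(\la)=\on{Ind}^{\mathbb{C}\widetilde S_d}_{\mathbb{C}\widetilde S_{\la_1^t}\otimes\cdots\otimes\mathbb{C}\widetilde S_{\la_p^t}}(L_{a_1}(\la_1^t)\otimes\cdots\otimes L_{a_p}(\la_p^t))$. Choose a set $\Sigma$ of coset representatives in $S_d$ for $S_{\la_1^t}\times\cdots\times S_{\la_p^t}$, fix a basis $\{v\}$ of the tensor product, and note that $M_{\bs a}(\la)$ has basis $\{v\otimes\sigma:\sigma\in\Sigma\}$. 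Since each $L_{a_k}(\la_k^t)$ is an evaluation module at $a_k$, the generator $t_i$ of $\widetilde S_{\la_k^t}$ acts on the $k$-th tensor factor as multiplication by $a_k$. Define the block function $k:\{1,\dots,d\}\to\{1,\dots,p\}$ by $k(i)=k$ whenever $\la_1^t+\cdots+\la_{k-1}^t<i\leq \la_1^t+\cdots+\la_k^t$, and set $\phi_\sigma(j):=k(\sigma(j))$. Using the commutation $\sigma t_j=t_{\sigma(j)}\sigma$, one obtains
\begin{equation*}
(v\otimes\sigma)\cdot t_j=a_{\phi_\sigma(j)}(v\otimes\sigma),
\end{equation*}
so each $v\otimes\sigma$ is a simultaneous eigenvector for $t_1,\dots,t_d$. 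Observe that $|\phi_\sigma^{-1}(l)|=\la_l^t$ for each $l$, since $\sigma$ is a permutation.

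Second, I would evaluate the Laurent series $H^{\bs a}_{\la,n}(x;t_J)$ from Lemma \ref{relations lemma} on a basis vector $v\otimes\sigma$. On the one hand, the multiplicity of $a_l$ in $\bs a^{(n)}$ equals $\#\{k:n<k\leq \ell(\la),\ \la_k\geq l\}=\max(\la_l^t-n,0)$, so the denominator becomes $\prod_l(x+a_l)^{\max(\la_l^t-n,0)}$. On the other hand, the numerator factors as $\prod_{j\in J}(x+a_{\phi_\sigma(j)})=\prod_l (x+a_l)^{|J_l|}$, where $J_l:=J\cap\phi_\sigma^{-1}(l)$. Thus $H^{\bs a}_{\la,n}(x;t_J)$ acts on $v\otimes\sigma$ as the rational function
\begin{equation*}
\frac{\prod_l (x+a_l)^{|J_l|}}{\prod_l (x+a_l)^{\max(\la_l^t-n,0)}}.
\end{equation*}

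Third, I would establish the key combinatorial inequality that this ratio is actually a polynomial, equivalently $\la_l^t-|J_l|\leq n$ for every $l$. Set $\mu_l:=\la_l^t-|J_l|$. Then $\mu_l\geq 0$ since $|J_l|\leq|\phi_\sigma^{-1}(l)|=\la_l^t$, and
\begin{equation*}
\sum_l\mu_l=\sum_l\la_l^t-|J|=d-(d-n)=n.
\end{equation*}
As the $\mu_l$ are non-negative integers summing to $n$, each individual $\mu_l$ is at most $n$, which is exactly the required bound. Hence $H^{\bs a}_{\la,n}(x;t_J)$ acts on every $v\otimes\sigma$ as a polynomial in $x$, so all coefficients of $x^{-1},\dots,x^{-m_\la(n)}$ annihilate $v\otimes\sigma$. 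These coefficients range over $\mathscr C^{\bs a}_{\la,n}$ as $J\in\mathbb J^{(d-n)}$ and $0\leq n<\ell(\la)$ vary, so the entire set $\mathscr C^{\bs a}_\la$ annihilates $M_{\bs a}(\la)$. Since $\mathscr C^{\bs a}_\la$ generates the ideal, the lemma follows.

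No serious obstacle is expected: the only delicate point is the combinatorial step, whose simplicity rests on the observation that the ``defect'' integers $\mu_l$ are non-negative and sum to $n$; everything else is bookkeeping about the induced $\widetilde S_d$-module structure on $M_{\bs a}(\la)$.
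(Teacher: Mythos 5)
Your proof is correct and follows essentially the same route as the paper: both diagonalize the operators $t_1,\dots,t_d$ on $M_{\bs a}(\la)$ and show that $H^{\bs a}_{\la,n}(x;t_J)$ evaluates to a polynomial because each factor $(x+a_l)$ of the denominator occurs at least $\max(\la_l^t-n,0)$ times in the numerator, which is exactly your $\mu_l\leq n$ count. The only cosmetic difference is that you verify this on every basis vector $v\otimes\sigma$, whereas the paper checks it only on the $S_d$-invariant cyclic vector and then extends to the whole module using that the ideal $\langle \mathscr C_\la^{\,\bs a}\rangle$ is symmetric.
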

\begin{proof}
Let $m = (m_1\otimes \cdots \otimes m_p)\otimes 1 \in M_{\bs a}({\la})$, where $m_i$ is a choice of basis vector for the trivial 1-dimensional  $S_{\la_i^t}$-module $L(\la_i^t).$ From the action of $\widetilde S_d$ on $M_{\bs a}({\la})$ given by (\ref{affzel}), we have
    $$m\cdot t_k = a_i\, m \qquad \mbox{for} \quad \la_1^t+\dots +\la_{i-1}^t < k \leq \la_1^t+\dots + \la_{i-1}^t +\la_i^t.  $$
    
We label by $a_i$ each box in the $i$-th column of the Young diagram of $\la$ just as in Section \ref{deformed TGP}. For $0\leq n<\ell(\la)$, let $\bs a^{(n)}$ be the sequence  defined in (\ref{param}). For  $J = \{t_{i_1},\dots, t_{i_{d-n}}\}\in \mathbb J^{(d-n)}$, let $\bs b^{(J)} = (b_1,\dots,b_{d-n}) \in \mathbb C^{d-n}$ be the sequence of complex numbers defined by
$$m\cdot t_{i_k} = b_k m.$$

Denote by $e_r(\bs b^{(J)})\,{\in}\, \mathbb C$ the scalar obtained by evaluating the elementary symmetric polynomial $e_r(t_J)$ on $\bs b^{(J)},$ so that
$ m\cdot e_r(t_J) = e_r(\bs b^{(J)})m$. Then, 
\begin{equation}
\label{c cdot}
m\cdot H_{\la,n}^{\,\bs a} (x; t_J) = m\Bigg( \frac{\prod_{i=1}^{d-n}(x+\bs b_i^{(J)})}{\prod_{i=1}^{m_\la(n)} (x+\bs a_i^{(n)})}\Bigg). \end{equation}

We show that the coefficients of $x^{-1}, \dots, x^{-m}$ corresponding to the right-hand side of $\eqref{c cdot}$ are zero. Indeed, if $a_i$ occurs a total of $s>0$ times as a coordinate in $\bs a^{(n)}$,  then $\la^t_i\geq n+s$ and $a_i$ occurs at least $s$ times in $\bs b^{(J)}$. As a result, the set of coordinates of $\bs a^{(n)}$ is a subset of the set of coordinates of $\bs{b}^{(J)}$. Therefore, the right-hand side of \eqref{c cdot} is a polynomial and the coefficients of $x^{-1}, \dots, x^{-m}$ must be zero. It follows that  $\langle \mathscr C_{\la,n}^{\,\bs a}\rangle$ annihilates the vector $m\in M_{\bs a}(\la).$ 

Since  $\langle \mathscr C_\la^{\,\bs a} \rangle $ is a symmetric ideal,  then  $\langle \mathscr C_\la^{\,\bs a} \rangle $ annihilates all the vectors of the form $m\cdot \sigma, \sigma\in S_d,$ proving the result.
\end{proof}

\begin{cor}\label{distinct cor}
    If $a_i\neq a_j$ for all $i\neq j$, there exists a surjective homomorphism of $\widetilde S_d$-modules
$$  R_{\bs a}(\la)  \twoheadrightarrow M_{\bs a}({\la}).$$ 
In particular, we have $\dim  R_{\bs a}(\la) \geq d_\la.$
\end{cor}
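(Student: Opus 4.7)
The strategy is to construct an explicit surjective $\widetilde S_d$-module map $\pi\colon R_{\bs a}(\la) \twoheadrightarrow M_{\bs a}(\la)$. Since $R_{\bs a}(\la)$ is cyclic on the $S_d$-invariant vector $1$, the map $\pi$ is determined by an $S_d$-invariant vector $v = \pi(1)\in M_{\bs a}(\la)$ annihilated by $\langle \mathscr C_\la^{\,\bs a}\rangle$, and surjectivity amounts to the condition $\mathbb C[t_1,\dots,t_d]\cdot v = M_{\bs a}(\la)$. With the notation of Lemma \ref{annihilation lem}, set $p = \la_1$, let $H = S_{\la_1^t}\times \cdots \times S_{\la_p^t}$ be embedded in $S_d$ via $\iota$, and take $m = (m_1\otimes\cdots\otimes m_p)\otimes 1\in M_{\bs a}(\la)$. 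Standard induced-module theory gives a basis $\{m\cdot \sigma : \sigma\in T\}$ of $M_{\bs a}(\la)$ of size $d_\la$, where $T$ is any set of representatives of $H\backslash S_d$. Define
$$v \ =\ \sum_{\sigma\in T} m\cdot \sigma.$$
Right multiplication by any $\tau\in S_d$ permutes the right cosets of $H$ in $S_d$, and $m\cdot \sigma$ depends only on $H\sigma$, so $v\cdot\tau = v$; by Lemma \ref{annihilation lem}, $v\cdot f = 0$ for every $f\in \langle \mathscr C_\la^{\,\bs a}\rangle$.

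The crucial step is to show that $v$ generates $M_{\bs a}(\la)$ as a $\mathbb C[t_1,\dots,t_d]$-module. Using the identity $\sigma t_k = t_{\sigma(k)}\sigma$ in $\widetilde S_d$ together with $m\cdot t_k = a_{i(k)}\,m$, where $i(k)$ denotes the column of $\la$ containing the $k$-th box, each basis vector $m\cdot \sigma$ is a joint eigenvector of $(t_1,\dots,t_d)$ with eigenvalue tuple
$$\bs\alpha_\sigma \ = \ (a_{i(\sigma(1))},\ldots,a_{i(\sigma(d))}) \ \in\ \mathbb C^d.$$
The function $k\mapsto i(\sigma(k))$ records, for each $k$, the block of $H$ containing $\sigma(k)$, and consequently determines the coset $H\sigma$ uniquely. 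Under the hypothesis that the $a_i$ are pairwise distinct, the $d_\la$ tuples $\{\bs\alpha_\sigma : \sigma\in T\}$ are therefore pairwise distinct in $\mathbb C^d$. Multivariate Lagrange interpolation then provides, for each $\sigma\in T$, a polynomial $P_\sigma\in\mathbb C[t_1,\dots,t_d]$ taking the value $1$ at $\bs\alpha_\sigma$ and $0$ at all other $\bs\alpha_{\sigma'}$; one checks that $v\cdot P_\sigma = m\cdot\sigma$, placing every basis vector of $M_{\bs a}(\la)$ inside $\mathbb C[t_1,\dots,t_d]\cdot v$.

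Combining these three properties yields the desired surjection $\pi$, whence $\dim R_{\bs a}(\la)\geq d_\la$. The main technical obstacle is the eigenvalue separation argument of the preceding paragraph: the distinctness of the $a_i$ is used precisely to distinguish right cosets of $H$ in $S_d$ through the joint spectrum of $(t_1,\dots,t_d)$, and the argument would collapse if any two $a_i$ coincided, as several basis vectors would then share the same eigenvalue tuple and Lagrange separation would fail.
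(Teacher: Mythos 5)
Your proof is correct, but at the decisive step it takes a genuinely different route from the paper. The paper's own argument is short: when the $a_i$ are pairwise distinct, the affine Zelevinsky product $M_{\bs a}(\la)$ is an irreducible $\widetilde S_d$-module (by the classification of irreducibles recalled in Section 2), hence cyclic on any non-zero vector, and combining this with Lemma \ref{annihilation lem} immediately yields the surjection $R_{\bs a}(\la)\twoheadrightarrow M_{\bs a}(\la)$; the choice of an $S_d$-invariant image for the cyclic vector $1$ is left implicit there. You instead avoid the classification theorem entirely: you exhibit the $S_d$-invariant vector $v=\sum_{\sigma\in T}m\cdot\sigma$ explicitly and prove by hand that it generates $M_{\bs a}(\la)$ over $\mathbb C[t_1,\dots,t_d]$, using that the basis vectors $m\cdot\sigma$ are joint $t$-eigenvectors whose eigenvalue tuples $\bs\alpha_\sigma$ are in bijection with the cosets $H\sigma$ and are pairwise distinct exactly because the $a_i$ are, so Lagrange interpolation recovers each $m\cdot\sigma$ from $v$; this separation computation is correct (it rests on $\sigma t_k=t_{\sigma(k)}\sigma$ and $m\cdot t_j=a_{i(j)}m$, both consistent with the paper's conventions). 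Both arguments use Lemma \ref{annihilation lem} in exactly the same way, namely to see that $\langle\mathscr C_\la^{\,\bs a}\rangle$ kills $v$ so that $1\mapsto v$ is well defined on $R_{\bs a}(\la)$. What your route buys is self-containedness (no appeal to the classification of irreducible $\widetilde S_d$-modules) and an explicit separating-polynomial mechanism very much in the spirit of the element $h$ used later in the proof of Theorem \ref{breakdown}; what the paper's route buys is brevity. One minor point, equally implicit in both write-ups: equivariance of $1\mapsto v$ for the full $\widetilde S_d$-action also involves $t_i^{-1}$, which is harmless since $t_i$ acts invertibly on both modules (Corollary \ref{t-a} on the source, and eigenvalues among the non-zero $a_j$ on the target).
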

\begin{proof}

If $a_i\neq a_j$ for all $i\neq j$, the $\widetilde S_d$-module $M_{\bs a}(\la)$ is irreducible and, in particular, cyclic. 
Since by Lemma \ref{annihilation lem} the ideal $\langle\mathscr C_\la^{\,\bs a}\rangle$ annihilates $M_{\bs a}(\la),$ there exists a surjective homomorphism of $\widetilde S_d$-modules
$R_{\bs a}(\la)  \twoheadrightarrow M_{\bs a}(\la).$
\end{proof}
\medskip
Now we are ready to prove our first main result.

\begin{thm}\label{flat thm} Let $\la \vdash d$ and let $R_{\bs a}(\la)$ be the deformed TGP algebra.

\begin{enumerate}
    \item We have $R_{\bs a}(\la)\simeq M(\la)$ as $S_d$-modules.  In particular, $\dim R_{\bs a}(\la) = d_\la$.  \label{item 1}
    \item If all $a_i$ are non-zero and distinct, then the deformed TGP module $R_{\bs a}(\la)$ is irreducible and $R_{\bs a}(\la) \simeq  M_{\bs a}(\la)$ as $\widetilde S_d$-modules.
\end{enumerate}

\end{thm}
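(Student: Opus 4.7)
The plan is to verify the chain of (in)equalities \eqref{ineq chain} to squeeze $\dim R_{\bs a}(\la)$ to equal $d_\la$, and then read off (ii) from Corollary \ref{distinct cor}. For the upper bound $\dim R_{\bs a}(\la) \leq d_\la$, I apply Lemma \ref{swapshift}(i) with $b=\epsilon\neq 0$ and $c=0$ to obtain an algebra isomorphism $R_{\epsilon\bs a}(\la)\simeq R_{\bs a}(\la)$, so the dimension is constant for $\epsilon\neq 0$. Since the generators of $\langle \mathscr C_\la^{\,\epsilon\bs a}\rangle$ depend polynomially on $\epsilon$ and specialize at $\epsilon=0$ to those of $\langle \mathscr C_\la\rangle$, Lemma \ref{deformed ideal lemma} delivers $d_\la=\dim R(\la)\geq \dim R_{\bs a}(\la)$.

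For the matching lower bound, I deform along $\bs a(\epsilon)=\bs a+\epsilon\bs s$ with $\bs s=(0,1,\dots,p-1)$, $p=\la_1$. The requirements that the coordinates of $\bs a(\epsilon)$ be nonzero and pairwise distinct each fail only for finitely many $\epsilon$ (the roots of finitely many linear polynomials in $\epsilon$), so there exists $\epsilon_0>0$ such that on $(0,\epsilon_0)$ the parameters $\bs a(\epsilon)$ are nonzero and pairwise distinct. On this interval Corollary \ref{distinct cor} combined with the upper bound forces $\dim R_{\bs a(\epsilon)}(\la)=d_\la$, so Lemma \ref{deformed ideal lemma} applied as $\epsilon\to 0^+$ yields $\dim R_{\bs a}(\la)\geq d_\la$. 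This completes the dimension statement in (i).

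For the $S_d$-module isomorphism, constant dimension allows one to select a monomial basis of $R_{\bs 0}(\la)=R(\la)$ which remains a basis of $R_{\epsilon\bs a}(\la)$ on a neighborhood of $\epsilon=0$, identifying all these quotients with a single vector space of dimension $d_\la$. In this basis the matrices of the $S_d$-action depend polynomially on $\epsilon$, so Lemma \ref{deformed module lemma} gives $R_{\epsilon\bs a}(\la)\simeq R(\la)\simeq M(\la)$ as $S_d$-modules for $\epsilon$ near $0$. Since the scaling isomorphism of Lemma \ref{swapshift}(i) is $S_d$-equivariant (scaling by $\epsilon$ commutes with permutation of variables), this transfers to $R_{\bs a}(\la)\simeq M(\la)$ for every $\bs a$. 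Part (ii) then follows at once: Corollary \ref{distinct cor} yields a surjection $R_{\bs a}(\la)\twoheadrightarrow M_{\bs a}(\la)$ between spaces of the same dimension $d_\la$, hence an isomorphism of $\widetilde S_d$-modules, and $M_{\bs a}(\la)$ is irreducible as an affine Zelevinsky tensor product at pairwise distinct evaluation parameters.

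The main obstacle is the production of the common basis needed to apply Lemma \ref{deformed module lemma}, since this rests on the flatness (constant dimension) of the family $\{R_{\epsilon\bs a}(\la)\}_\epsilon$. In effect, the dimension argument and the $S_d$-structure argument must be carried out in order: first establish that the dimension is constant $d_\la$, and only then exploit this to trivialize the family and transport the $S_d$-action continuously to $\epsilon=0$.
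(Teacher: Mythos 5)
Your proposal is correct and takes essentially the same route as the paper: the identical chain of inequalities \eqref{ineq chain}, obtained from Lemma \ref{swapshift}(i), Lemma \ref{deformed ideal lemma}, and Corollary \ref{distinct cor}, followed by Lemma \ref{deformed module lemma} for the $S_d$-module structure, with (ii) deduced from the resulting equality of dimensions. The only differences are expository (you make explicit the constancy of $\dim R_{\bs a+\epsilon\bs s}(\la)$ on a small interval and the common-basis step behind Lemma \ref{deformed module lemma}, which the paper leaves implicit), so there is nothing substantive to add.
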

\begin{proof}
We finally show the chain of inequalities \eqref{ineq chain}.  By Lemma \ref{swapshift}, for any $\epsilon>0$, there exists an algebra isomorphism between $R_{\bs a}(\la)$ and   $R_{\epsilon \bs a}(\la)$, so $\dim R_{\bs a}(\la) = \dim R_{\epsilon \bs a}$.

The inequalities $\dim R(\la)\geq \dim R_{\epsilon \bs a}(\la)$  and $\dim R_{\bs a}(\la)\geq \dim R_{\bs a+\epsilon \bs s}(\la)$ for sufficiently small $\epsilon$ both follow from  Lemma \ref{deformed ideal lemma}. Again, for $\epsilon>0$ small enough, the sequence of evaluation parameters $\bs a+\epsilon \bs s$ has distinct coordinates, thus the inequality $\dim R_{\bs a {+\epsilon \bs s}}(\la)\geq  \dim M_{\bs a +\epsilon \bs s}(\la) = \dim M(\la) $ follows from Corollary \ref{distinct cor}.

It follows that all inequalities in \eqref{ineq  chain} are in fact equalities and, in particular,   $\dim R_{\bs a}(\la) = d_\la$. Therefore, the isomorphism of $S_d$-modules $R_{\bs a}(\la)\simeq R(\la)$ follows from Lemma \ref{deformed module lemma}, proving \eqref{item 1}.

For (ii), under the hypothesis $a_i$ non-zero and $a_i\neq a_j$, it follows that the map in Corollary \ref{distinct cor} is an isomorphism, since the dimensions of $R_{\bs a}(\la)$ and of $M_{\bs a}(\la)$ are equal.
\end{proof}

\subsection{Zelevinsky tensor products of TGP modules} Let $d=d_1+d_2$ and let $\kappa : \ \C[t_1,\dots,t_{d}]\to \C[t_1,\dots,t_{d_1}]\otimes \C[t_1,\dots,t_{d_2}]$ be the natural isomorphism of algebras given by $t_i \mapsto t_i\otimes 1 $ for $ 1\leq i \leq d_1$ and  $t_i\mapsto 1 \otimes t_i$ for  ${d_1+}1\leq i \leq d.$

For $\la\vdash d_1, \ \mu\vdash d_2$  and $\bs a=(a_1,\dots,a_{\la_1}), \ \bs b=(b_1,\dots, b_{\mu_1})$,  let $\nu=\la\sqcup\mu$ and   $\bs c=\bs a \sqcup \bs b$ be their concatenated partition and sequence of evaluation parameters.

\begin{lem} \label{isom}
 The map $\kappa:\mathbb C[t_1,\dots, t_{d_1+d_2}]\to \mathbb C[t_1,\dots, t_{d_1}]\otimes \mathbb C[t_1,\dots, t_{d_2}]$ descends to a surjective homomorphism of algebras $\kappa :\ R_{\bs c}(\nu) \to R_{\bs a}(\la) \otimes R_{\bs b}(\mu)$.
\end{lem}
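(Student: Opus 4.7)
The plan is to show that $\kappa$ maps every generator of $\langle \mathscr{C}^{\,\bs c}_\nu\rangle$ into the ideal $\langle \mathscr{C}^{\,\bs a}_\la\rangle\otimes \C[t_1,\dots,t_{d_2}] + \C[t_1,\dots,t_{d_1}]\otimes \langle \mathscr{C}^{\,\bs b}_\mu\rangle$; surjectivity will then be immediate since $\kappa$ is already an isomorphism at the polynomial algebra level. By Lemma \ref{relations lemma}, it is enough to verify that for every $0\le n<\ell(\nu)$ and every $J\in\mathbb J^{(d-n)}$, the image of the Laurent series $H^{\bs c}_{\nu,n}(x;t_J)$ under $\kappa$ is a polynomial in $x$ with coefficients in $R_{\bs a}(\la)\otimes R_{\bs b}(\mu)$.

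The key combinatorial observation is that since $\nu=\la\sqcup\mu$ is formed by taking the union of columns, for every $n$ we have $m_\nu(n)=m_\la(n)+m_\mu(n)$, and the multiset of parameters $\bs c^{(n)}$ decomposes as the union $\bs a^{(n)}\sqcup \bs b^{(n)}$. Consequently, the denominator of $H^{\bs c}_{\nu,n}(x;t_J)$ defined in \eqref{series} factors as
\begin{equation*}
\prod_{i=1}^{m_\nu(n)}(x+\bs c^{(n)}_i)=\prod_{i=1}^{m_\la(n)}(x+\bs a^{(n)}_i)\cdot \prod_{i=1}^{m_\mu(n)}(x+\bs b^{(n)}_i).
\end{equation*}
Given $J\in \mathbb J^{(d-n)}$, write $J=J_1\sqcup J_2$ with $J_1\subseteq\{1,\dots,d_1\}$ and $J_2\subseteq\{d_1+1,\dots,d\}$, and set $|J_1|=d_1-n_1$, $|J_2|=d_2-n_2$, so that $n_1+n_2=n$. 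Since $n_1,n_2\le n$, the multiset comparison $\bs a^{(n)}\subseteq \bs a^{(n_1)}$ (and likewise for $\mu$) yields polynomial quotients, so direct manipulation of \eqref{series} gives the identity
\begin{equation*}
\kappa\bigl(H^{\bs c}_{\nu,n}(x;t_J)\bigr)= \bigl(H^{\bs a}_{\la,n_1}(x;t_{J_1})\otimes H^{\bs b}_{\mu,n_2}(x;t_{J_2})\bigr)\cdot \prod_{a\in \bs a^{(n_1)}\setminus \bs a^{(n)}}(x+a)\cdot \prod_{b\in \bs b^{(n_2)}\setminus \bs b^{(n)}}(x+b).
\end{equation*}

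Since $|J_1|=d_1-n_1$ and $|J_2|=d_2-n_2$, Lemma \ref{relations lemma} applied in $R_{\bs a}(\la)$ and $R_{\bs b}(\mu)$ shows that both $H^{\bs a}_{\la,n_1}(x;t_{J_1})$ and $H^{\bs b}_{\mu,n_2}(x;t_{J_2})$ are honest polynomials in $x$ in the respective quotients (with the convention that the series reduces to the numerator when $n_i\ge\ell$ of the corresponding partition). The extra factors on the right are manifestly polynomials in $x$. Therefore $\kappa(H^{\bs c}_{\nu,n}(x;t_J))$ is a polynomial in $x$ in $R_{\bs a}(\la)\otimes R_{\bs b}(\mu)$, so all coefficients of negative powers of $x$ vanish in the tensor product. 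These coefficients are precisely $\kappa$ applied to the generators of $\langle \mathscr{C}^{\,\bs c}_\nu\rangle$, hence $\kappa$ descends to a well-defined algebra homomorphism $R_{\bs c}(\nu)\to R_{\bs a}(\la)\otimes R_{\bs b}(\mu)$. Surjectivity is clear, since $\kappa$ is already surjective on the polynomial algebras.

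The only subtle point in the argument — and the main thing to pin down carefully in the write-up — is the bookkeeping of multisets $\bs a^{(n)},\bs b^{(n)}, \bs c^{(n)}$ under concatenation. Once the identifications $m_\nu(n)=m_\la(n)+m_\mu(n)$ and $\bs c^{(n)}=\bs a^{(n)}\sqcup\bs b^{(n)}$ are established from the concatenation of columns, the rest is purely algebraic manipulation.
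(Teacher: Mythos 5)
Your proof is correct and follows essentially the same route as the paper: split $J=J_1\sqcup J_2$, use $\bs c^{(n)}=\bs a^{(n)}\sqcup\bs b^{(n)}$ to factor the series \eqref{series}, and invoke Lemma \ref{relations lemma} in each tensor factor. In fact your version is slightly more precise than the paper's, which writes the image of $H^{\bs c}_{\nu,n}(x;t_J)$ as exactly $H^{\bs a}_{\la,n_1}(x;t_{J_1})\otimes H^{\bs b}_{\mu,n_2}(x;t_{J_2})$ and suppresses the polynomial correction factors $\prod_{a\in\bs a^{(n_1)}\setminus\bs a^{(n)}}(x+a)\prod_{b\in\bs b^{(n_2)}\setminus\bs b^{(n)}}(x+b)$ that you track explicitly; since these are polynomials in $x$, both arguments reach the same conclusion.
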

\begin{proof}
We only need to check that the relations in $R_{\bs c}(\nu)$ are mapped to zero. We have
\be
H_{\nu,n}^{\bs c}(x;t_J) \mapsto H_{\la,n_1}^{\bs a}(x;t_{J_1})\otimes H_{\mu,n_2}^{\bs b}(x; t_{J_2}), 
\ee 
where $J_1 = J\cap \{t_1,\dots, t_{d_1}\}$, $J_2 = J\cap \{t_{d_1+1},\dots, t_{d_1+d_2}\}$ and $n_1  = d_1-|S_1|$, $n_2 = d_2-|S_2|$. Note that $n_1+n_2=n$.

In $R_{\bs a}(\la) \otimes R_{\bs b}(\mu)$,  the series $H_{\la,n_1}^{\bs a}(x;t_{J_1})\otimes 1$ and  $1\otimes H_{\mu,n_2}^{\bs b}(x; t_{J_2}) $ are polynomials. 
\end{proof}

 We denote by $1_{\bs a}, 1_{\bs b}$  the cyclic vectors of  the deformed TGP modules $ R_{\bs a}(\la),  R_{\bs b}(\mu)$, respectively.

\begin{thm} \label{breakdown}
The map $\mathbb C[t_1,\dots, t_d] \to R_{\bs a}(\la)\widetilde \boxtimes R_{\bs b}(\mu)$ defined by
\begin{equation} \label{mapsto}
    f\mapsto \sum_{\sigma\in S_{d}} 1_{\bs a}\otimes 1_{\bs b} \otimes\, \sigma f,
\end{equation} 
descends to a homomorphism of $\widetilde S_d$-modules   $R_{\bs c}(\nu)\rightarrow  R_{\bs a}(\lambda)\widetilde\boxtimes  R_{\bs b}(\mu)$. Moreover,  if $a_i\neq b_j$ for all $i,j$, this map is an isomorphism of $\widetilde S_d$-modules. 
\end{thm}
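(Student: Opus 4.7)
The plan is to prove the theorem in three stages: first that the map descends to a $\widetilde S_d$-module homomorphism, second that the source and target have equal dimensions, and third that the map is surjective when $a_i\neq b_j$. For well-definedness, I would use the $S_d$-invariance of the ideal $\langle \mathscr C_\nu^{\,\bs c}\rangle$ to reduce to a single summand: rewriting $\sigma f = (f\cdot \sigma^{-1})\sigma$ in $\mathbb C\widetilde S_d$ and pulling the polynomial factor across the outermost tensor, each summand becomes $\kappa(f\cdot \sigma^{-1})\otimes \sigma$, which vanishes by Lemma \ref{isom} whenever $f\in \langle \mathscr C_\nu^{\,\bs c}\rangle$. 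Checking $\widetilde S_d$-equivariance on the generators $\sigma_i$ and $t_i$ is then direct, via reindexing of the sum and commutativity of the $t_i$'s with polynomial multiplication.

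By Theorem \ref{flat thm}(i) together with the standard dimension formula for affine Zelevinsky tensor products, both sides have dimension $d_\nu = \binom{d}{d_1}d_\la d_\mu$, so it suffices to prove surjectivity. Decomposing the sum over cosets $\sigma = \tau\sigma_k$ with $\tau\in S_{d_1}\times S_{d_2}$ and $\{\sigma_k\}$ representing $S_d/(S_{d_1}\times S_{d_2})$, using that $1_{\bs a}\otimes 1_{\bs b}$ is $S_{d_1}\times S_{d_2}$-invariant, the map rewrites as
\[
\Phi(f) = (d_1!\, d_2!)\sum_k \kappa(f\cdot \sigma_k^{-1})\otimes \sigma_k.
\]

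For surjectivity, my plan is to apply this formula to the explicit polynomial $f_0 = \prod_{i=1}^{d_1}\prod_{j=1}^{\mu_1}(t_i - b_j)$. For any non-identity coset, some $i_0\leq d_1$ satisfies $\sigma_k^{-1}(i_0)>d_1$, so the corresponding factor $\prod_j(t_{\sigma_k^{-1}(i_0)} - b_j)$ is evaluated on $R_{\bs b}(\mu)$, where it vanishes by the key lemma $\prod_{j=1}^{\mu_1}(t_l - b_j) = 0$ in $R_{\bs b}(\mu)$ for every $l$. Thus $\Phi(f_0) = (d_1!\,d_2!)\kappa(f_0)\otimes 1$ sits in the identity-coset piece; under the hypothesis $a_i\neq b_j$, Corollary \ref{t-a} makes each $(t_i - b_j)$ invertible in $R_{\bs a}(\la)$, so $\kappa(f_0)$ is a unit. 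Then $\Phi(f_0\, g) = (d_1!\,d_2!)\kappa(f_0)\kappa(g)\otimes 1$ covers all of $R_{\bs a}(\la)\otimes R_{\bs b}(\mu)\otimes 1$ as $g$ ranges over $R_{\bs c}(\nu)$, using surjectivity of $\kappa$ (Lemma \ref{isom}), and applying the $\widetilde S_d$-action then generates the entire induced module, giving surjectivity.

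The hard part will be the key lemma $\prod_{j=1}^{\mu_1}(t_l - b_j) = 0$ in $R_{\bs b}(\mu)$. My plan for this is to first establish it for fully generic $\bs b$ (all $b_j$ distinct and nonzero), where Theorem \ref{flat thm}(ii) identifies $R_{\bs b}(\mu)$ with $M_{\bs b}(\mu)$, a tensor product of evaluation modules on each of which $t_l$ acts by some $b_j$; hence the minimal polynomial of $t_l$ divides $\prod_j(x-b_j)$. Since the generators of $\langle \mathscr C_\mu^{\,\bs b}\rangle$ depend polynomially on $\bs b$ and a degree bound on cofactors witnessing ideal membership is immediate from the degree of $\prod_j(t_l-b_j)$, containment is a closed condition on $\bs b$, so density of the generic locus extends the identity to all $\bs b$.
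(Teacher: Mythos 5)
Your coset reduction, the dimension count $d_\nu=\binom{d}{d_1}d_\la d_\mu$, and the overall strategy (well-definedness via $S_d$-invariance of $\langle\mathscr C_\nu^{\,\bs c}\rangle$ and Lemma \ref{isom}, then surjectivity by exhibiting a polynomial that kills every non-identity coset and is invertible on the identity coset) are sound and run parallel to the paper's proof. The genuine gap is your ``key lemma'' $\prod_{j=1}^{\mu_1}(t_l-b_j)=0$ in $R_{\bs b}(\mu)$ --- more precisely, the way you propose to prove it. The generic case via Theorem \ref{flat thm}(ii) is fine, but the passage to arbitrary $\bs b$ by ``containment is a closed condition'' is not valid: membership of a polynomially varying element in an ideal whose generators vary polynomially in the parameters is a constructible condition, not a closed one, even after fixing a degree bound on the cofactors. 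With the degree bound you get a parametric linear system, and solvability of such a system can hold generically and fail on the special locus (already $t\in\langle b\,t\rangle$ for $b\neq 0$ but not for $b=0$); ideals can shrink in the limit, which is exactly why Lemma \ref{deformed ideal lemma} in the paper gives only an inequality and why flatness required proof. Note also that your statement is strictly stronger than Corollary \ref{t-a}, which gives $\prod_{j=1}^{d_2}(t_l-\bs b^{(0)}_j)=0$, the product over all $d_2$ box labels with multiplicity; the stronger version does appear to be true, but it would need a separate argument (a direct derivation from the relations, or a degeneration argument using a parameter-independent basis), neither of which you supply.

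The repair is easy and brings you to the paper's argument: take $f_0=\prod_{i=1}^{d_1}\prod_{j=1}^{d_2}(t_i-\bs b^{(0)}_j)$ instead. Then the vanishing needed on the second tensor slot for non-identity cosets is literally Corollary \ref{t-a}, while on the identity coset each factor $t_i-\bs b^{(0)}_j$ is still invertible on $R_{\bs a}(\la)$ (the minimal polynomial of $t_i$ there has roots among $\bs a^{(0)}$, and $a_i\neq b_j$ --- a mild but harmless extension of Corollary \ref{t-a} as stated). This $f_0$ is, up to the mirror factor in the variables $t_{d_1+1},\dots,t_d$, exactly the paper's element $h$; the paper additionally exploits that each factor is symmetric in its block of variables, so Lemma \ref{symmev} evaluates the image of $h$ as an explicit nonzero scalar times the cyclic vector $1_{\bs a}\otimes 1_{\bs b}\otimes 1$, which replaces your auxiliary step with $\Phi(f_0\,g)$ and the invertibility of $\kappa(f_0)$; either way the conclusion then follows from cyclicity and the dimension count.
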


\begin{proof} 
We first check that the relations in $R_{\bs c}(\nu)$ are mapped to zero. For any $\sigma \in S_d$ and $f\in \langle \mathscr C_\nu^{\,\bs c}\rangle$, the polynomial  $\widetilde f  = f\cdot \sigma^{-1} \in \langle \mathscr C_\nu^{\,\bs c} \rangle$. Thus, it follows  from  Lemma \ref{isom} that
\begin{equation*}
    \sum_{\sigma\in S_d} 1_{\bs a}\otimes 1_{\bs b}\otimes \sigma f = \sum_{\sigma \in S_d} (1_{\bs a} \otimes 1_{\bs b})\cdot \widetilde f \otimes  \sigma=0.
\end{equation*}
Moreover, using \eqref{semidirect}, one verifies that \eqref{mapsto}  descends to a homomorphism of $\widetilde S_d$-modules $R_{\bs c}(\nu) \mapsto R_{\bs a}(\la)\widetilde\boxtimes R_{\bs b}(\mu)$.

We now show that \eqref{mapsto} is surjective under the hypothesis  $a_i\neq b_j$ for all $i,j$. Let
$$h = \bigg(\prod_{i = 1}^{d_1}\prod_{j=1}^{d_2}(t_i-\bs b_j^{(0)})\bigg) \bigg(\prod_{i = d_1+1}^{d_1+d_2}\prod_{j=1}^{d_1} (t_i-\bs a^{(0)}_j)\bigg)\in  R_{\bs c}(\nu). $$
If $\sigma \not \in S_{d_1}\times S_{d_2}\hookrightarrow S_d$, then $1_{\bs{a}}\otimes 1_{\bs{b}}\otimes \sigma h=0$ by Corollary \ref{t-a}. Therefore, we have
\be
h\mapsto \sum_{\sigma \in S_d}1_{\bs a}\otimes 1_{\bs b} \otimes \sigma  h = \gamma \ ( 1_{\bs a}\otimes 1_{\bs b} \otimes 1),
\ee
where $\gamma$ is the non-zero scalar
$$\gamma = (d_1!)(d_2!) \prod_{i=1}^{d_1} \prod_{j=1}^{d_2}(\bs a_i^{(0)}-\bs b_j^{(0)})(\bs b_j^{(0)}-\bs a_i^{(0)}). $$ 

Since $1_{\bs a}\otimes 1_{\bs b}\otimes 1$ is a cyclic vector of $R_{\bs a}(\lambda)\widetilde\boxtimes  R_{\bs b}(\mu)$, the map is surjective. As the dimensions of  $R_{\bs c}(\nu)$ and $R_{\bs a}(\la)\widetilde \boxtimes R_{\bs b}(\mu)$ coincide by 
Theorem \ref{flat thm} and 
\eqref{forgetzele}, then  \eqref{mapsto} is an isomorphism.
\end{proof}

As a result, we obtain that the TGP module $ R_{\bs b}(\la)$ is the Zelevinsky tensor product of the TGP modules corresponding to the splitting of $\la$ and $\bs b$ and similarly for the amended TGP module $\widetilde R_{\bs b}(\la)$.

\begin{cor}\label{split of R_b cor}
Let $\la = \la^{(1)}\sqcup \cdots \sqcup \la^{(s)}$ and $\ \bs b = \bs a_1\sqcup \cdots \sqcup \bs a_s$ be the splitting of $\la$ and $\bs b$ with $\bs a_i = (a_i,\dots,a_i)$. We have the isomorphisms of $\widetilde S_d$-modules
$$R_{\bs b}(\la) \simeq R_{a_1}(\la^{(1)})\widetilde \boxtimes \cdots \widetilde \boxtimes R_{a_s}(\la^{(s)}), \qquad \widetilde R_{\bs b}(\la) \simeq \widetilde R_{a_1}(\la^{(1)})\widetilde \boxtimes \cdots \widetilde \boxtimes \widetilde R_{a_s}(\la^{(s)}).$$
\end{cor}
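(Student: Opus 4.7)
The plan is to deduce the corollary from Theorem~\ref{breakdown} by induction on $s$, and then transport the result through the amendment by $L(1^{\{d\}})$ via a standard projection formula.

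\textbf{First isomorphism.} I would argue by induction on $s$. The case $s = 1$ is a tautology. For $s \geq 2$, set $\la' = \la^{(2)} \sqcup \cdots \sqcup \la^{(s)}$ and $\bs b' = \bs a_2 \sqcup \cdots \sqcup \bs a_s$, so that $\la = \la^{(1)} \sqcup \la'$ and $\bs b = \bs a_1 \sqcup \bs b'$. By the very definition of the splitting the scalars $a_1, \dots, a_s$ are pairwise distinct, so $a_1$ differs from every coordinate of $\bs b'$. This is precisely the hypothesis of Theorem~\ref{breakdown} for the two-factor decomposition $(\la^{(1)}, \bs a_1)$ and $(\la', \bs b')$, which yields an isomorphism of $\widetilde S_d$-modules
\[
R_{\bs b}(\la) \simeq R_{a_1}(\la^{(1)}) \,\widetilde\boxtimes\, R_{\bs b'}(\la').
\]
Applying the induction hypothesis to $R_{\bs b'}(\la')$ finishes the first part.

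\textbf{Amended isomorphism.} I would combine the first part with the projection formula for induced modules. For any $M_i \in \on{Rep}(\widetilde S_{d_i})$, $i = 1, 2$, with $d = d_1 + d_2$, the natural map
\[
(M_1 \,\widetilde\boxtimes\, M_2) \otimes L(1^{\{d\}}) \;\xrightarrow{\sim}\; \bigl(M_1 \otimes L(1^{\{d_1\}})\bigr) \,\widetilde\boxtimes\, \bigl(M_2 \otimes L(1^{\{d_2\}})\bigr)
\]
is an isomorphism of right $\widetilde S_d$-modules. This is a direct instance of the projection formula $\on{Ind}_A^B(N) \otimes V \simeq \on{Ind}_A^B(N \otimes \on{Res}_A^B V)$ applied to the subalgebra inclusion $A = \mathbb C \widetilde S_{d_1} \otimes \mathbb C \widetilde S_{d_2} \hookrightarrow B = \mathbb C \widetilde S_d$ given by $\widetilde\iota$, together with the observation that the sign character of $S_d$ restricts along the Young subgroup $S_{d_1} \times S_{d_2} \hookrightarrow S_d$ to the product of sign characters, and that the extension of $L(1^{\{d\}})$ to $\widetilde S_d$ by letting the $t_i$ act trivially restricts along $\widetilde\iota$ to $L(1^{\{d_1\}}) \otimes L(1^{\{d_2\}})$ similarly extended. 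Iterating this identity in tandem with the first part yields the claimed decomposition of $\widetilde R_{\bs b}(\la)$.

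\textbf{Anticipated difficulty.} The plan has no conceptual obstacle; the only points requiring care are checking that the distinctness hypothesis of Theorem~\ref{breakdown} is exactly matched by the splitting convention (which it is by construction), and verifying that the sign representation, once extended to $\widetilde S_d$ by trivial action of the $t_i$, restricts tensor-factor-wise along $\widetilde\iota$. Both reduce to unwinding the definition of $\widetilde\iota$ in \eqref{affzel} and the standard factorization of the sign character on a Young subgroup.
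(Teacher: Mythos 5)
Your proposal is correct and follows essentially the same route as the paper, which deduces the corollary from Theorem~\ref{breakdown} by induction on $s$ together with the associativity of the affine Zelevinsky tensor product. The only difference is that you spell out, via the projection formula for the inclusion $\widetilde\iota$, why the sign twist commutes with $\widetilde\boxtimes$ — a point the paper leaves implicit — and that verification is accurate.
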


\begin{proof}
    The corollary follows from Theorem \ref{breakdown} by induction on $s$ and the associativity of the affine Zelevinsky tensor product.
\end{proof}

\subsection{Amended deformed TGP modules and local Weyl modules.}  \label{amendedlocal}

In this subsection, we study the image of the amended deformed TGP modules  under the affine Schur-Weyl duality functor. 

\begin{lem} \label{soclehead}
    Let $\lambda\vdash d$ and $a\in \mathbb C^\times$. The amended deformed TGP module $\widetilde R_a(\la)$ has head $L_a(1^{\{ d \}})$ and socle containing $L_a(\la)$. 
\end{lem}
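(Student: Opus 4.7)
The plan is to set up a filtration of $R_a(\la)$ by powers of the maximal ideal $\mathfrak m_a = (t_1-a, \ldots, t_d-a)$ and transfer information from the non-deformed TGP algebra through it. Applying Lemma~\ref{swapshift}(i) with $b=1$, $c=-a$ gives an $S_d$-equivariant algebra isomorphism $\varphi : R(\la) \to R_a(\la)$, $t_k \mapsto t_k+a$, which carries the augmentation ideal $\mathfrak m_0 = (t_1,\ldots,t_d)$ of $R(\la)$ onto $\mathfrak m_a$. Since $R(\la)$ is graded with $\mathfrak m_0^k = \bigoplus_{i\ge k} R(\la)[i]$, this yields $\mathfrak m_a^k / \mathfrak m_a^{k+1} \simeq R(\la)[k]$ as $S_d$-modules, and in particular $\mathfrak m_a^{n(\la^t)+1} = 0$.

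For the head, the bottom quotient $R_a(\la)/\mathfrak m_a \simeq R(\la)[0] = L(d)$ is one-dimensional with each $t_i$ acting by $a$, producing a nonzero $\widetilde S_d$-quotient isomorphic to $L_a(d)$. To see this exhausts the head, I will use that $R_a(\la)$ is cyclic on the $S_d$-invariant vector $1$, which satisfies $\prod_{i=1}^d (x-t_i) = (x-a)^d$ by Lemma~\ref{symmev}. Any simple $\widetilde S_d$-quotient $V$ therefore contains a nonzero $S_d$-invariant vector. Writing $V \simeq L_{b_1}(\mu_1) \widetilde\boxtimes \cdots \widetilde\boxtimes L_{b_k}(\mu_k)$, existence of an $S_d$-invariant vector in $L(\mu_1)\boxtimes\cdots\boxtimes L(\mu_k)$ forces each $\mu_j = (d_j)$ trivial, and matching the polynomial $\prod_j(x-b_j)^{d_j} = (x-a)^d$ then forces $k=1$ and $b_1 = a$, so $V \simeq L_a(d)$. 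Its multiplicity in the head is at most $[R_a(\la):L(d)] = K_{(d),\la^t} = 1$ by Theorem~\ref{flat thm}, hence equals $1$. Twisting by $L(1^{\{d\}})$ via \eqref{transpose} gives the head of $\widetilde R_a(\la)$ as $L_a(1^{\{d\}})$.

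For the socle, the deepest nonzero piece $\mathfrak m_a^{n(\la^t)}$ is isomorphic as $S_d$-module to $R(\la)[n(\la^t)] = L(\la^t)$ by Theorem~\ref{GP thm}. Because each $t_i - a$ sends this subspace into $\mathfrak m_a^{n(\la^t)+1} = 0$, every $t_i$ acts by $a$ on it, making it a simple $\widetilde S_d$-submodule isomorphic to $L_a(\la^t)$ and hence contained in the socle of $R_a(\la)$. Tensoring with $L(1^{\{d\}})$ yields $L_a(\la) \subseteq \mathrm{socle}(\widetilde R_a(\la))$.

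The main technical point is the compatibility of $\varphi$ with the graded structure, i.e.\ that $\varphi(\mathfrak m_0^k) = \mathfrak m_a^k$; this is immediate from the definition of $\varphi$ together with the identity $\mathfrak m_0^k = \bigoplus_{i \ge k} R(\la)[i]$ in the graded algebra $R(\la)$. Once this is in place, both claims follow formally from identifying the extremal graded pieces and applying the sign twist.
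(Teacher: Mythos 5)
Your proof is correct and follows essentially the same route as the paper's: reduce to $R_a(\la)$ via the sign twist, use cyclicity on the $S_d$-fixed vector $1$ together with $[R_a(\la):L(d)]=1$ to pin down the head (your extra step identifying the evaluation parameter through Lemma \ref{symmev} and the classification of simples just fills in what the paper leaves implicit), and place the top graded piece $L(\la^t)$ of $R(\la)$ in the socle via the shift isomorphism of Lemma \ref{swapshift}, with your $\mathfrak m_a$-adic filtration making the paper's appeal to the grading precise. One harmless slip: Lemma \ref{swapshift}(i) with $b=1$, $c=-a$ gives the map $t_k\mapsto t_k-a$ (not $t_k+a$); with that correction $\varphi(\mathfrak m_0)=\mathfrak m_a$ as you assert, and the rest of your argument goes through unchanged.
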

\begin{proof} Recall that $\widetilde R_{a}(\la)= R_{a}(\la)\otimes L(1^{\{ d \}})$. Thus, by \eqref{transpose}, it suffices to show that $R_{a}(\la)$ has head $L_a(d)$ and socle containing $L_a(\lambda^t)$.

 By construction, the deformed TGP module $R_{a}(\la)$ has a cyclic vector $1\in R_a(\la)$ which generates a trivial $S_d$-module $\mathbb C  S_d\cdot  1 \simeq L(d)$. Moreover, $[R_{a}(\la): L(d)] = 1$. Therefore, any proper submodule of $R_{a}(\la)$ is a direct sum of irreducible $S_d$-modules which do not contain the trivial representation. It follows that $R_a(\la)$  has a unique maximal submodule and the head of $R_a(\la)$ is $L_a(d)$.

    From Theorem \ref{GP thm} and the isomorphism  of $\mathbb C S_d\otimes \mathbb C[t_1,\dots,t_d]$-modules $R_a(\la)\simeq R(\la)$, we know that the graded component of maximal degree of $R_a(\la)$ is isomorphic to $L(\la^t)$ as $S_d$-modules. Therefore, the socle of $R_a(\la)$ contains $L_a(\la^t)$. 
\end{proof}

Recall the affine Schur-Weyl duality functor $\widetilde{\mathcal F}_d: \mbox{Rep}(\widetilde S_d)\to \mbox{Rep}_d(\widetilde \g)$, see Theorem \ref{affineschur}.

\begin{thm} \label{dualweyl} Let $\lambda\vdash d, \ a\in \mathbb C^\times$. 
 We have the isomorphism of $\widetilde \g$-modules $\widetilde{\mathcal F}_d(\widetilde R_a(\la))\simeq W_{a^{-1}}(\la)^{\vee}.$    
\end{thm}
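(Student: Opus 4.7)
The plan is to apply the characterization of the local Weyl module from Theorem \ref{localweyl}(v). Since that result characterizes $W_{a^{-1}}(\la)$ itself rather than its dual, I first use the duality-compatibility Proposition in Section \ref{SW} to rewrite the desired isomorphism equivalently as $\widetilde{\mathcal F}_d(\widetilde R_a(\la)^{\vee}) \simeq W_{a^{-1}}(\la)$. It then suffices to verify for the left-hand side the three conditions of (v): matching $\g$-module structure, head containing $V_{a^{-1}}(\la)$, and socle equal to $V_{a^{-1}}(\omega_d)$, where $d = |\la|$.

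For the $\g$-module structure, I would first establish $\widetilde R_a(\la) \simeq L(1^{\{\la_1^t\}}) \boxtimes \cdots \boxtimes L(1^{\{\la_p^t\}})$ as $S_d$-modules, where $p = \la_1$. This follows from Theorem \ref{flat thm}(i), which gives $R_a(\la) \simeq M(\la) = L(\la_1^t) \boxtimes \cdots \boxtimes L(\la_p^t)$, combined with the identity $(M_1 \boxtimes M_2) \otimes L(1^{\{d\}}) \simeq (M_1 \otimes L(1^{\{d_1\}})) \boxtimes (M_2 \otimes L(1^{\{d_2\}}))$ (a projection formula, using that the sign representation restricts to the outer tensor of signs under $S_{d_1} \times S_{d_2} \hookrightarrow S_d$) together with the sign-twist relation \eqref{transpose}. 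Applying \eqref{functorzele} then yields $\mathcal F_d(\widetilde R_a(\la)) \simeq V(\omega_{\la_1^t}) \otimes \cdots \otimes V(\omega_{\la_p^t})$, which matches $W_{a^{-1}}(\la)$ as a $\g$-module by Theorem \ref{localweyl}(ii). Dualizing preserves this $\g$-structure since $V(\mu)^{\vee} \simeq V(\mu)$.

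For the head and socle conditions, I would dualize Lemma \ref{soclehead} to obtain that the socle of $\widetilde R_a(\la)^{\vee}$ is $L_{a^{-1}}(1^{\{d\}})$ and that its head contains $L_{a^{-1}}(\la)$. Applying the exact functor $\widetilde{\mathcal F}_d$, which sends an evaluation module $L_a(\mu)$ to $V_a(\mathrm{wt}(\mu))$, and using $\mathrm{wt}(1^{\{d\}}) = \omega_d$ together with $\mathrm{wt}(\la) = \la$, yields the two remaining conditions. The main technical hypothesis is $d \leq n$, which is required for $\omega_d$ to be defined and for Theorem \ref{localweyl}(v) to apply; under this assumption (equivalent to $\widetilde{\mathcal F}_d$ being an equivalence), (v) gives $\widetilde{\mathcal F}_d(\widetilde R_a(\la)^{\vee}) \simeq W_{a^{-1}}(\la)$, and dualizing back completes the proof.
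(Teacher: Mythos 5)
Your proposal is correct and follows essentially the same route as the paper: it uses Theorem \ref{flat thm} with \eqref{functorzele} and Theorem \ref{localweyl}(ii) for the $\g$-structure, Lemma \ref{soclehead} for the head and socle, and the criterion of Theorem \ref{localweyl}(v) to identify the Weyl module. The only (cosmetic) difference is that you dualize on the $\widetilde S_d$ side and commute the dual through $\widetilde{\mathcal F}_d$ via the Proposition in Section \ref{SW}, whereas the paper applies $\widetilde{\mathcal F}_d$ first and dualizes the resulting $\widetilde\g$-module before invoking (v).
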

\begin{proof}

From Theorem \ref{flat thm}, we have the following isomorphism of $S_d$-modules 
$$\widetilde R_a(\la)\simeq M(\la)\otimes L(1^{\{ d \}}) \simeq  L(1^{\{ \la_1^t\}}) \boxtimes \cdots \boxtimes L(1^{\{ \lambda_{p}^t\}}),$$ 
where $p=\la_1$.
Thus, by \eqref{functorzele} and Theorem \ref{localweyl} (ii), we have the isomorphism of $\g$-modules
    $$\widetilde{\mathcal F}_d(\widetilde R_a(\la))\simeq
    V(\omega_{\la_1^t})\otimes \dots \otimes  V(\omega_{\la_p^t})
   \simeq V(\omega_1)^{\otimes \lambda(h_1)}\otimes \cdots \otimes V(\omega_d)^{\otimes \lambda(h_d)} \simeq W_{a^{-1}}(\la),$$
    where $d=|\la|$.  By Lemma \ref{soclehead}, the $\widetilde \g$-module $\widetilde{\mathcal F}_d(\widetilde R_a(\la))$ has head $V_a(\omega_d)$ and socle containing $V_a(\la)$. Therefore, from \eqref{veedual} we see that $(\widetilde{\mathcal F}_d(\widetilde R_a(\la))^{\vee}$ satisfies the hypothesis of Theorem \ref{localweyl} (v) where $a$ is replaced by $a^{-1}$, and therefore is isomorphic to the Weyl module $W_{a^{-1}}(\la)$.
\end{proof}

Now, we arrive at the last main result of the paper.
\begin{cor} \label{dual} For $\la \vdash d$ and $\bs b=(b_1,\dots,b_{\la_1})$, $b_i\in\C^\times$,  let  $\la = \la^{(1)}\sqcup \cdots \sqcup \la^{(s)}, \ \bs b = \bs a_1\sqcup \cdots \sqcup \bs a_s$  be the splitting of $\la$ and $\bs b$ with $\bs a_i = (a_i,\dots, a_i).$ We have an isomorphism of $\widetilde \g$-modules
$$\widetilde{\mathcal F}_d(\widetilde R_{\bs b}(\la)) \simeq \big( W_{a_1^{-1}}(\la^{(1)})\otimes \cdots \otimes W_{a_s^{-1}}(\la^{(s)})\big)^{\vee}. $$
\end{cor}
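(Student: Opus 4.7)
The corollary combines three ingredients already established in the paper: the splitting of $\widetilde R_{\bs b}(\la)$ into an affine Zelevinsky tensor product, the compatibility of the affine Schur--Weyl functor $\widetilde{\mathcal F}_d$ with tensor products and duals, and the single-parameter result of Theorem \ref{dualweyl}. The plan is as follows.

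First I would invoke Corollary \ref{split of R_b cor} to write
\[
\widetilde R_{\bs b}(\la) \simeq \widetilde R_{a_1}(\la^{(1)})\,\widetilde\boxtimes\,\cdots\,\widetilde\boxtimes\,\widetilde R_{a_s}(\la^{(s)})
\]
as $\widetilde S_d$-modules, where $d_i = |\la^{(i)}|$. Applying $\widetilde{\mathcal F}_d$ and iterating the compatibility \eqref{functortensor} of the affine Schur--Weyl functor with the affine Zelevinsky tensor product, I obtain
\[
\widetilde{\mathcal F}_d(\widetilde R_{\bs b}(\la)) \simeq \widetilde{\mathcal F}_{d_1}(\widetilde R_{a_1}(\la^{(1)}))\otimes\cdots\otimes \widetilde{\mathcal F}_{d_s}(\widetilde R_{a_s}(\la^{(s)}))
\]
as $\widetilde\g$-modules.

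Next, for each factor I would apply Theorem \ref{dualweyl}, which gives $\widetilde{\mathcal F}_{d_i}(\widetilde R_{a_i}(\la^{(i)})) \simeq W_{a_i^{-1}}(\la^{(i)})^{\vee}$. This yields the isomorphism
\[
\widetilde{\mathcal F}_d(\widetilde R_{\bs b}(\la)) \simeq W_{a_1^{-1}}(\la^{(1)})^{\vee}\otimes\cdots\otimes W_{a_s^{-1}}(\la^{(s)})^{\vee}.
\]

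The final step is to identify the tensor product of duals with the dual of the tensor product. For any two finite-dimensional $\widetilde\g$-modules $V, W$, the natural pairing map $V^{\vee}\otimes W^{\vee}\to (V\otimes W)^{\vee}$ given by $f\otimes g \mapsto \big((v\otimes w)\mapsto f(v)g(w)\big)$ is a linear isomorphism; a direct check using cocommutativity of the coproduct on $U(\widetilde \g)$ together with the definition \eqref{tildeomega} of the anti-automorphism $\widetilde \omega$ shows it intertwines the $\widetilde \g$-actions. Iterating, I get
\[
W_{a_1^{-1}}(\la^{(1)})^{\vee}\otimes\cdots\otimes W_{a_s^{-1}}(\la^{(s)})^{\vee} \simeq \big(W_{a_1^{-1}}(\la^{(1)})\otimes\cdots\otimes W_{a_s^{-1}}(\la^{(s)})\big)^{\vee},
\]
which concludes the proof.

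There is no real obstacle here; the one point that must be verified carefully is the compatibility of $\widetilde\omega$ with the coproduct in the last step, but this is immediate from the fact that $U(\widetilde \g)$ is cocommutative and $\widetilde\omega$ is an anti-automorphism. Alternatively, one may bypass this step entirely by invoking the Proposition preceding Section \ref{TGP sec} (compatibility of $\widetilde{\mathcal F}_d$ with duals): apply $\widetilde{\mathcal F}_d$ to the dual of the splitting of $\widetilde R_{\bs b}(\la)$, then dualize back.
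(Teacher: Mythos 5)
Your proposal is correct and follows the same route as the paper, which proves the corollary by exactly these three ingredients: Corollary \ref{split of R_b cor}, formula \eqref{functortensor}, and Theorem \ref{dualweyl}. The only extra content you supply is the explicit check that the tensor product of duals is the dual of the tensor product (via cocommutativity of $U(\widetilde\g)$ and the anti-automorphism $\widetilde\omega$), a standard fact the paper leaves implicit.
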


\begin{proof} 
The corollary follows from Corollary \ref{split of R_b cor}, Theorem \ref{dualweyl} and formula \eqref{functortensor}.
\end{proof}

\section{An example}\label{example sec}
We illustrate the situation with an explicit example.
\begin{example} \label{example}
    Let $\lambda = (2,1)$. For $a,b\in \mathbb C^\times$, set $M =  L_a(1,1) \widetilde\boxtimes L_b(1).$ Let $\rho: \widetilde S_3 \to \mbox{GL}(M)$ be the associated representation. 
   If $a\neq b$, then $M$ is irreducible and isomorphic to the amended deformed TGP module $M\simeq \widetilde R_{(a,b)}(\lambda)$.

    The right $\widetilde S_3$-module $M$ has a basis $\alpha = \{v_0, v_1, v_2\}$ given by 
    $$v_0 = m_1\otimes m_2 \otimes (1), \qquad v_1 = m_1 \otimes m_2 \otimes (2,3),\qquad v_2 =  m_1 \otimes m_2 \otimes (1,3),$$
    where $L_a(1,1)=\C m_1$ and $L_b(1)=\C m_2$.

We now describe three different $\widetilde S_3$-modules which we denote by $M_0,M_1$, and $M_2$, obtained as  limits of $M$ as $b\to a$. We observe that all these three modules are non-isomorphic, so the result depends on the way the limit is taken. In each case, we choose a basis, write the action explicitly in this basis and take the limit of the corresponding matrices. 

\medskip
    
In the basis $\alpha,$ we have the following expression for the action of the generators of $\widetilde S_3$

\begin{align*}
[\rho(1,2)]_\alpha  =     \begin{bmatrix}
 -1 & 0 & 0\\
0 & 0 & -1 \\
0 & -1 & 0
\end{bmatrix}&, \qquad  [\rho(2,3)]_\alpha =  \begin{bmatrix}
0 & 1 & 0\\
1 & 0 & 0 \\
0 & 0 & -1
\end{bmatrix},
\\\\
[\rho(t_1)]_\alpha = \begin{bmatrix}
a & 0 & 0\\
0 & a & 0\\
0 & 0 & b
\end{bmatrix}, \qquad [\rho(t_2)]_\alpha =& \begin{bmatrix} 
a & 0 & 0\\ 
0 & b & 0\\
0 & 0 & a
\end{bmatrix}, \qquad [\rho(t_3)]_\alpha = \begin{bmatrix}
b & 0 & 0\\
0 & a & 0\\
0 & 0 & a
\end{bmatrix}.
\\
\end{align*}

Let $M_0$ be the $ \widetilde S_3$-module with the basis $\alpha$ and action $\rho_0 : \widetilde S_3 \to GL(M_0)$ given by
$$[\rho_0(t_i)]_\alpha = \lim_{b\to a} [\rho(t_i)]_\alpha, \qquad [\rho_0(1,2)]_\alpha = [\rho(1,2)]_\alpha, \qquad [\rho_0(2,3)]_\alpha = [\rho(2,3)]_\alpha.$$ 
Clearly, we have the decomposition of  $ \widetilde S_3$-modules
 \begin{align*}
 &M_0\simeq L_a(1,1,1)\oplus L_a(2,1), \\ L_a(1,1,1) =  \mbox{ span}&(v_0-v_1-v_2), \qquad L_a(2,1) =  \mbox{span}(v_0+v_1, \, v_0+v_2). 
 \end{align*}

\medskip

Next, consider the basis $\beta = \{w_0, w_1, w_2\}$ of $M$ given by
$$w_0 = v_0-v_1-v_2,\qquad w_1 = w_0\cdot t_1, \qquad w_2 = w_0\cdot t_2. $$
In the basis $\beta$, we have 
\begin{align*}
[\rho(1,2)]_\beta  =     \begin{bmatrix}
 -1 & 0 & 0\\
0 & 0 & -1 \\
0 & -1 & 0
\end{bmatrix}&, \qquad  [\rho(2,3)]_\beta =  \begin{bmatrix}
-1 & 0 & -(2a+b)\\
0 & -1 & 1 \\
0 & 0 &  1
\end{bmatrix},
\\\\
[\rho(t_1)]_\beta = \begin{bmatrix}
0 & -ab & -a^2\\
1 & a+b & a\\
0 & 0 & a
\end{bmatrix}, \ [\rho(t_2)]_\beta =& \begin{bmatrix} 
0 & -a^2 & -ab\\ 
0 &  a & 0\\
1 & a & a+b
\end{bmatrix}, \  [\rho(t_3)]_\beta = \begin{bmatrix}
2a+b & a^2+ab & a^2+ab\\
 -1& 0 & -a\\
 -1& -a & 0
\end{bmatrix}.
\\
\end{align*}

Let $M_1$ be the $ \widetilde S_3$-module with the basis $\beta$ and action $\rho_1: \widetilde S_3 \to GL(M_1)$ given by
$$[\rho_1(t_i)]_\beta = \lim_{b\to a} [\rho(t_i)]_\beta, \qquad [\rho_1(1,2)]_\beta = [\rho(1,2)]_\beta, \qquad [\rho_1(2,3)]_\beta = \lim_{b\to a}[\rho(2,3)]_\beta.$$
Then, as $S_3$-modules, we have
\begin{align*}
 M_1\simeq &\ L(1,1,1)\oplus L(2,1), \\ L(1,1,1) = \mbox{span}(w_0), \qquad& L(2,1) =  \mbox{span}(-aw_0+w_1,-aw_0+ w_2). 
 \end{align*}
 
However, while $L{(2,1)}$ is an $\widetilde S_3$-submodule of $M_1$, we see that $L(1,1,1)$ is not. Moreover, $M_1$ is cyclic with cyclic vector $w_0$.
Thus, $M_1$ is an indecomposable module with submodule (the socle) $L_a{(2,1)}$ and quotient module (the head) $L_a(1,1,1)$. Thus,  $M_1$ is isomorphic  to the amended deformed TGP module $ \widetilde R_a(2,1)$ as $\widetilde S_3$-module.

\medskip

Finally, let $\gamma = \{u_0, u_1, u_2\}$ be the basis of $M$ given by
$$u_0 = v_0+v_1,\qquad u_1 = u_0\cdot (1,2), \qquad u_2 = u_0\cdot t_2. $$
With respect to the basis $\gamma$, we have
\begin{align*}
[\rho(1,2)]_\gamma  =     \begin{bmatrix}
 0 & 1 & b\\
1 & 0 & b \\
0 & 0 & -1
\end{bmatrix}&, \qquad  [\rho(2,3)]_\gamma =  \begin{bmatrix}
1 & -1 & a+b\\
0 & -1 & 0 \\
0 & 0 &  -1
\end{bmatrix},
\\\\
[\rho(t_1)]_\gamma = \begin{bmatrix}
a & b & 0\\
0 & b &    0   \\
0 & -1 &  a
\end{bmatrix}, \quad [\rho(t_2)]_\gamma =& \begin{bmatrix} 
0 & 0 & -ab\\ 
0 &  a & 0\\
1 & 0 & a+b
\end{bmatrix}, \quad [\rho(t_3)]_\gamma = \begin{bmatrix}
 a+b& -b & ab \\
 0&  a& 0\\
 -1&  1& 0 
\end{bmatrix}.
\\
\end{align*}

Let $M_2$ be the $ \widetilde S_3$-module with the basis $\gamma$ and action $\rho_2: \widetilde S_3 \to GL(M_2)$ given by
$$[\rho_2(t_i)]_\gamma = \lim_{b\to a} [\rho(t_i)]_\gamma, \qquad [\rho_2(1,2)]_\gamma = \lim_{b\to a}[\rho(1,2)]_\gamma, \qquad [\rho_2(2,3)]_\gamma = \lim_{b\to a}[\rho(2,3)]_\gamma.$$
We again have the $S_3$-module decomposition
\begin{align*}
 M_2&\simeq L(1,1,1)\oplus L(2,1), \\ L(1,1,1) = \mbox{span}&(-au_0+u_2), \qquad L(2,1) =  \mbox{span}(u_0,u_1). 
 \end{align*}
 
In this case, $L(1,1,1)$ is an $\widetilde S_3$-submodule and $L(2,1)$ is not. Thus, $M_2$ is an indecomposable module with submodule (the socle) $L_a{(1,1,1)}$ and  quotient module (the head) $L_a(2,1)$. We see that $M_2$ is the dual module of  $M_1$ with $a$ changed to $a^{-1}$. Moreover, $M_2$ is mapped under the affine Schur-Weyl duality functor to the local Weyl module $\widetilde{\mathcal F}_3(M_2)\simeq W_{a}(\lambda)$.

\medskip

The following picture summarizes the three different $\widetilde S_3$-modules obtained as limits of $M$.

\begin{figure}[h]
   \begin{tikzpicture} 
\node at (-0.7,-0.3) {$M_0 = $};   
\node at (0.15, 1.1) {$a$}; 
\node at (0.15, -0.3) {$\bigoplus$}; 
\draw (0,0.6)--(0,0.9);
\draw (0,0.9)--(0.3,0.9);
\draw (0.3,0.9)--(0.3,0.6);
\draw (0.3,0.6)--(0,0.6); 

\draw (0,0.3)--(0,0.6);
\draw (0,0.6)--(0.3,0.6);
\draw (0.3,0.6)--(0.3,0.3);
\draw (0.3,0.3)--(0,0.3); 

\draw (0,0)--(0,0.3);
\draw (0,0.3)--(0.3,0.3);
\draw (0.3,0.3)--(0.3,0);
\draw (0.3,0)--(0,0); 

\node at (0.1,-1.4) {$a$}; 
\draw (0,-0.6)--(0, -0.9);
\draw (0,-0.9)--(0.3,-0.9);
\draw (0.3,-0.9)--(0.3,-0.6);
\draw (0.3,-0.6)--(0,-0.6);

\draw (0.3,-0.6)--(0.6,-0.6);
\draw (0.6,-0.6)--(0.6,-0.9);
\draw (0.6,-0.9)--(0.3,-0.9);

\draw (0,-0.9)--(0,-1.2);
\draw (0,-1.2)--(0.3,-1.2);
\draw (0.3,-1.2)--(0.3,-0.9);

\node at (2.3,-0.3) {$M_1 = $};
\node at (3.15, 1.1) {$a$}; 
 \draw[->]  (3.15, -0.1)--(3.15,-0.5);
\draw (3,0.6)--(3,0.9);
\draw (3,0.9)--(3.3,0.9);
\draw (3.3,0.9)--(3.3,0.6);
\draw (3.3,0.6)--(3,0.6); 

\draw (3,0.3)--(3,0.6);
\draw (3,0.6)--(3.3,0.6);
\draw (3.3,0.6)--(3.3,0.3);
\draw (3.3,0.3)--(3,0.3); 

\draw (3,0)--(3,0.3);
\draw (3,0.3)--(3.3,0.3);
\draw (3.3,0.3)--(3.3,0);
\draw (3.3,0)--(3,0); 

\node at (3.1,-1.4) {$a$}; 
\draw (3,-0.6)--(3, -0.9);
\draw (3,-0.9)--(3.3,-0.9);
\draw (3.3,-0.9)--(3.3,-0.6);
\draw (3.3,-0.6)--(3,-0.6);

\draw (3.3,-0.6)--(3.6,-0.6);
\draw (3.6,-0.6)--(3.6,-0.9);
\draw (3.6,-0.9)--(3.3,-0.9);

\draw (3,-0.9)--(3,-1.2);
\draw (3,-1.2)--(3.3,-1.2);
\draw (3.3,-1.2)--(3.3,-0.9);

\node at (5.3,-0.3) {$M_2 = $};
\node at (6.15, 1.1) {$a$}; 
 \draw[<-]  (6.15, -0.1)--(6.15,-0.5);
\draw (6,0.6)--(6,0.9);
\draw (6,0.9)--(6.3,0.9);
\draw (6.3,0.9)--(6.3,0.6);
\draw (6.3,0.6)--(6,0.6); 

\draw (6,0.3)--(6,0.6);
\draw (6,0.6)--(6.3,0.6);
\draw (6.3,0.6)--(6.3,0.3);
\draw (6.3,0.3)--(6,0.3); 

\draw (6,0)--(6,0.3);
\draw (6,0.3)--(6.3,0.3);
\draw (6.3,0.3)--(6.3,0);
\draw (6.3,0)--(6,0); 

\node at (6.1,-1.4) {$a$}; 
\draw (6,-0.6)--(6, -0.9);
\draw (6,-0.9)--(6.3,-0.9);
\draw (6.3,-0.9)--(6.3,-0.6);
\draw (6.3,-0.6)--(6,-0.6);

\draw (6.3,-0.6)--(6.6,-0.6);
\draw (6.6,-0.6)--(6.6,-0.9);
\draw (6.6,-0.9)--(6.3,-0.9);

\draw (6,-0.9)--(6,-1.2);
\draw (6,-1.2)--(6.3,-1.2);
\draw (6.3,-1.2)--(6.3,-0.9);

\end{tikzpicture} 
\caption{The three different limits of $M$. }
\end{figure}

\end{example}

\medskip

{\bf Acknowledgments.}
We would like to thank A. Moura for telling us about the problem and useful discussions.

EM is partially supported by the Simons foundation grant number \#709444.

MF acknowledges the support from the FAPESP's project 2019/23380-0 during his studies in Campinas, where this project has been initiated.

\medskip


\begin{thebibliography}{aaaaaaa}
\bibitem[CP86]{CP86} V. Chari and A. Pressley, \textit{New unitary representations of loop groups}, Math. Ann. \textbf{275} (1986), no. 1, 87–104

\bibitem[CP96]{CP96} V. {Chari} and A. Pressley, \textit{Quantum affine algebras and affine Hecke algebras}, Pacific J. Math. \textbf{174} (1996), no. 2, 295–326.

\bibitem[CP01]{CP}  V. Chari and A. Pressely, \textit{Weyl modules for classical and quantum affine algebras}, Represent. Theory \textbf{5} (2001), 191-223


\bibitem[CL06]{CL} V. Chari and S. Loktev, \textit{Weyl, Demazure and fusion modules for the current algebra of $\mathfrak{sl}_{r+1}$}, Adv. Math. \textbf{207}
(2006), 928–960

\bibitem[DP81]{DP} C. De Concini and C. Procesi, {\it Symmetric functions, conjugacy classes and the flag variety}, Invent. Math. {\bf 64} (1981), 203-219

\bibitem[FKM22]{FKM}
 E. Feigin, A. Khoroshkin, and I. Makedonskyi, {\it Duality theorems for current groups}, Israel J. Math., {\bf 248} (2022), no. 1, 441–479

\bibitem[Fl21]{Y} Y. Flicker, \textit{Affine Schur Duality}, J. Lie Theory \textbf{31} (2021), no. 3, 681–718

\bibitem[F97]{F} W. Fulton, Young tableaux, Cambridge University Press, 1997.

\bibitem[FH04]{FH} W. Fulton and J. Harris, Representation Theory: A First Course, 
Springer, 2004



\bibitem[GP92]{GP} A. Garsia and C. Procesi, \textit{On certain graded {$S_n$}-modules and the $q$-Kostka polynomials}, Adv. Math. \textbf{94} (1992), no. 1, 82–138


\bibitem[KN12]{KN} R. Kodera and K. Naoi, \textit{Loewy series of Weyl modules and the Poincaré polynomials
of quiver varieties}, Publ. Res. Inst. Math. Sci. \textbf{48} (2012), no. 3, 477–500

\bibitem[K]{K} T. Kuwabara, {\it Symmetric coinvariant algebras and local Weyl modules at a double point}, J. Algebra {\bf 295} (2006), no. 2, 426–440


\bibitem[T82]{T}  T. Tanisaki, \textit{Defining ideals of the closures of the conjugacy classes and representations of the Weyl groups},
Tohoku Math. J. (2) \textbf{34} (1982), no. 4, 575–585
 


\end{thebibliography}
\end{document}